\definecolor{winered}{rgb}{0.6,0,0}
\definecolor{lessblue}{rgb}{0,0,0.7}
\newcommand{\myitem}[3]{\item[#2]\def\@currentlabel{#3}\label{#1}}
\def\@tocline#1#2#3#4#5#6#7{
\begingroup
  \par
    \parindent\z@ \leftskip#3 \relax \advance\leftskip\@tempdima\relax
                  \rightskip\@pnumwidth plus 4em \parfillskip-\@pnumwidth
    \ifcase #1 
       \vskip 0.6em \hskip 0em 
       \or
       \or \hskip 0em 
       \or \hskip 1em 
    \fi%
    %
    #6
    %
    \nobreak\relax{\leavevmode\leaders\hbox{\,.}\hfill}
    \hbox to\@pnumwidth {\@tocpagenum{#7}}
  \par
\endgroup
}
 \def\l@section{\@tocline{0}{0pt}{0pc}{}{}}
\renewcommand{\tocsection}[3]{%
  \indentlabel{\@ifnotempty{#2}{ 
    \ignorespaces\bfseries{#2. #3}}}
  \indentlabel{\@ifempty{#2}{\ignorespaces\bfseries{#3}}{}} 
    \vspace{1.5pt}}
\renewcommand{\tocsubsection}[3]{%
  \indentlabel{\@ifnotempty{#2}{
    \ignorespaces#2. #3}}
  \indentlabel{\@ifempty{#2}{\ignorespaces #3}{}}
    \vspace{1.5pt}}
\renewcommand{\tocsubsubsection}[3]{%
  \indentlabel{\@ifnotempty{#2}{
    \ignorespaces#2. #3}}
  \indentlabel{\@ifempty{#2}{\ignorespaces #3}{}}
    \vspace{1.5pt}}
\def\@nomenstarted{0}
\newlength{\@nomenoldtabcolsep}
\newcommand{\nomenstart}
  {%
    \def\@nomenstarted{1}%
    \setlength{\@nomenoldtabcolsep}{\tabcolsep}%
    \setlength{\tabcolsep}{3.5pt}%
    \begin{longtable}{p{0.11\textwidth} p{0.86\textwidth}}
  }
\newcommand{\nomenitem}[2]{%
    \ifcase\@nomenstarted%
      \or 
      \or \\ 
    \fi%
    #1\,{\leavevmode\leaders\hbox{\,.}\hfill} & #2%
    \def\@nomenstarted{2}%
  }%
\newcommand{\nomenend}
  {\\%
      \end{longtable}%
      \setlength{\tabcolsep}{\@nomenoldtabcolsep}%
      \def\@nomenstarted{0}%
  }
\newcommand{\vast}{\bBigg@{4}}
\newcommand{\Vast}{\bBigg@{5}}
\newcommand{\VAST}[1]{\bBigg@{#1}}
\numberwithin{equation}{section}
\numberwithin{figure}{section}
\newtheorem{thm}{Theorem}[section]
\newtheorem{prop}[thm]{Proposition}
\newtheorem{lemma}[thm]{Lemma}
\newtheorem{cor}[thm]{Corollary}
\newtheorem*{thm*}{Theorem}
\newtheorem*{prop*}{Proposition}
\newtheorem*{cor*}{Corollary}
\newtheorem*{conj*}{Conjecture}
\theoremstyle{definition}
\newtheorem{definition}[thm]{Definition}
\newtheorem{notation}[thm]{Notation}
\theoremstyle{remark}
\newtheorem{rmk}[thm]{Remark}
\newcommand{\fakephantomsection}{%
  \Hy@MakeCurrentHref{\@currenvir.\the\Hy@linkcounter}
  \Hy@raisedlink{\hyper@anchorstart{\@currentHref}\hyper@anchorend}%
  \Hy@GlobalStepCount\Hy@linkcounter%
}
\newcommand{\mc}{\mathcal}
\newcommand{\cA}{\mc A}
\newcommand{\cC}{\mc C}
\newcommand{\cD}{\mc D}
\newcommand{\cF}{\mc F}
\newcommand{\cL}{\mc L}
\newcommand{\cO}{\mc O}
\newcommand{\cV}{\mc V}
\newcommand{\cX}{\mc X}
\newcommand{\ms}{\mathscr}
\newcommand{\sC}{\ms C}
\newcommand{\sD}{\ms D}
\newcommand{\sF}{\ms F}
\newcommand{\sG}{\ms G}
\newcommand{\scri}{\ms I}
\newcommand{\sL}{\ms L}
\newcommand{\sR}{\ms R}
\newcommand{\sY}{\ms Y}
\newcommand{\N}{\mathbb{N}}
\newcommand{\R}{\mathbb{R}}
\newcommand{\Sph}{\mathbb{S}}
\newcommand{\sfG}{\mathsf{G}}
\newcommand{\fc}{\mathfrak{c}}
\newcommand{\fm}{\mathfrak{m}}
\newcommand{\sld}{\slashed{\dd}{}}
\newcommand{\slg}{\slashed{g}{}}
\newcommand{\slh}{\slashed{h}{}}
\newcommand{\slu}{\slashed{u}{}}
\newcommand{\slA}{\slashed{A}{}}
\newcommand{\slB}{\slashed{B}{}}
\newcommand{\slGamma}{\slashed{\Gamma}{}}
\newcommand{\sldelta}{\slashed{\delta}{}}
\newcommand{\slDelta}{\slashed{\Delta}{}}
\newcommand{\slnabla}{\slashed{\nabla}{}}
\newcommand{\slpi}{\slashed{\pi}{}}
\newcommand{\slomega}{\slashed{\omega}{}}
\newcommand{\sltr}{\operatorname{\slashed\tr}}
\newcommand{\ran}{\operatorname{ran}}
\newcommand{\Hom}{\operatorname{Hom}}
\renewcommand{\Re}{\operatorname{Re}}
\newcommand{\tr}{\operatorname{tr}}
\newcommand{\dv}{\operatorname{div}}
\newcommand{\cd}{\fc}
\newcommand{\Ups}{\Upsilon}
\newcommand{\eps}{\epsilon}
\newcommand{\hra}{\hookrightarrow}
\newcommand{\la}{\langle}
\newcommand{\ol}{\overline}
\newcommand{\pa}{\partial}
\newcommand{\dd}{{\mathrm d}}
\newcommand{\ad}{{\mathrm{ad}}}
\newcommand{\ra}{\rangle}
\newcommand{\wh}{\widehat}
\newcommand{\wt}{\widetilde}
\newcommand{\ubar}[1]{\underaccent{\bar}#1}
\newcommand{\pfstep}[1]{$\bullet$\ \underline{\textit{#1}}}
\newcommand{\bop}{{\mathrm{b}}}
\newcommand{\scl}{{\mathrm{sc}}}
\newcommand{\ebop}{{\mathrm{e,b}}}
\newcommand{\cp}{{\mathrm{c}}}
\newcommand{\Diff}{\mathrm{Diff}}
\newcommand{\Vb}{\cV_\bop}
\newcommand{\Diffb}{\Diff_\bop}
\newcommand{\Veb}{\cV_\ebop}
\newcommand{\Diffeb}{\Diff_\ebop}
\newcommand{\Tb}{{}^{\bop}T}
\newcommand{\Tsc}{{}^{\scl}T}
\newcommand{\Teb}{{}^{\ebop}T}
\newcommand{\half}{{\tfrac{1}{2}}}
\newcommand{\loc}{{\mathrm{loc}}}
\newcommand{\CI}{\cC^\infty}
\newcommand{\CIc}{\cC^\infty_\cp}
\newcommand{\Hb}{H_{\bop}}
\newcommand{\Heb}{H_{\ebop}}
\newcommand{\Ric}{\mathrm{Ric}}
\newcommand{\bhm}{\fm}
\newcommand{\openbigpmatrix}[1]
  {%
    \def\@bigpmatrixsize{#1}%
    \addtolength{\arraycolsep}{-#1}%
    \begin{pmatrix}%
  }
\newcommand{\closebigpmatrix}
  {%
    \end{pmatrix}%
    \addtolength{\arraycolsep}{\@bigpmatrixsize}%
  }
\newlength{\enummargin}\setlength{\enummargin}{1.5em}
\newcommand{\usref}[1]{{\upshape\ref{#1}}}
\newcommand*{\fwbw}[1]{\expandafter\@fwbw\csname c@#1\endcsname}
\newcommand*{\@fwbw}[1]{\ifcase #1 \or {\rm fw}\or {\rm bw}\fi}
\AddEnumerateCounter{\fwbw}{\@fwbw}
\begin{document}

\title[Exterior stability]{Exterior stability of Minkowski space in generalized harmonic gauge}

\date{\today}

\subjclass[2010]{Primary 35B40, Secondary 83C05, 35L05}
\keywords{Nonlinear stability, gauge conditions, edge metrics}

\author{Peter Hintz}
\address{Department of Mathematics, ETH Z\"urich, R\"amistrasse 101, 8092 Z\"urich, Switzerland}
\email{peter.hintz@math.ethz.ch}

\begin{abstract}
  We give a short proof of the existence of a small piece of null infinity for $(3+1)$-dimensional spacetimes evolving from asymptotically flat initial data as solutions of the Einstein vacuum equations. We introduce a modification of the standard wave coordinate gauge in which all non-physical metric degrees of freedom have strong decay at null infinity. Using a formulation of the gauge-fixed Einstein vacuum equations which implements constraint damping, we establish this strong decay regardless of the validity of the constraint equations. On a technical level, we use notions from geometric singular analysis to give a streamlined proof of semiglobal existence for the relevant quasilinear hyperbolic equation.
\end{abstract}

\maketitle

\section{Introduction}
\label{SI}

The goal of this paper is to introduce a novel generalized wave coordinate gauge on asymptotically flat spacetimes, and to demonstrate its utility by proving the existence of a piece of null infinity for the spacetime evolving from asymptotically flat initial data sets.

\begin{thm}[Main theorem, rough version]
\label{ThmIMain}
  Let $\Sigma=\{x\in\R^3\colon|x|>R\}$ for some $R>0$, and suppose $\gamma,k\in\CI(\Sigma;S^2 T^*\Sigma)$ are a Riemannian metric, resp.\ smooth symmetric 2-tensor on $\Sigma$ satisfying the constraint equations,\footnote{These are $R_\gamma-|k|_\gamma^2+(\tr_\gamma k)^2=0$, $\delta_\gamma k+\dd\tr_\gamma k=0$, with $R_\gamma$ the scalar curvature and $\delta_\gamma$ the (negative) divergence. As shown by Choquet-Bruhat \cite{ChoquetBruhatLocalEinstein}, the constraint equations are necessary and sufficient for the existence of a short time solution of the initial value problem for the Einstein vacuum equations.} with
  \[
    \gamma = \gamma_\bhm + \tilde\gamma,\qquad \gamma_\bhm=\Bigl(1-\frac{2\bhm}{r}\Bigr)^{-1}\dd r^2+r^2\slg\quad (\bhm\in\R,\ R>2\bhm),
  \]
  where $\slg$ is the standard metric on $\Sph^2$. Let $\ell_0\in(0,1)$. Suppose that $\tilde\gamma$ and $k$ are small in the sense that\footnote{This implies pointwise $r^{-1-\ell_0}$ decay of $\gamma$ to $\gamma_\bhm$, and pointwise $r^{-2-\ell_0}$ decay of $k$ as $r\to\infty$.}
  \begin{equation}
  \label{EqIMainSmall}
    \sum_{|\alpha|\leq N} \| r^{-1/2+\ell_0}(r\nabla)^\alpha(\tilde\gamma,r k) \|_{L^2} < \eps
  \end{equation}
  with $N$ large, $\eps>0$ small. Then there exists a Lorentzian metric $g$ on
  \[
    \Omega = \{ (t,r,\omega) \in [0,\infty)\times(R,\infty)\times\Sph^2 \colon r_*-t>R+1 \} \subset \R^4 = \{ (z^0=t,z^1,z^2,z^3) \},
  \]
  where $r_*=r+2\bhm\log(r-2\bhm)$, with the following properties:
  \begin{enumerate}
  \item $g$ solves the Einstein vacuum equations $\Ric(g)=0$;
  \item identifying $\Sigma\cap\{r_*>R+1\}$ with $t^{-1}(0)\subset\Omega$, the induced metric and second fundamental form of $g$ at $\Sigma$ are given by $\gamma$ and $k$;
  \item\label{ItIMainGauge} $g$ is in a modified wave coordinate gauge relative to the Schwarzschild metric $g_\bhm=-\bigl(1-\tfrac{2\bhm}{r}\bigr)\dd t^2+\gamma_\bhm$, see \eqref{EqIYGauge} and \eqref{EqIY0};
  \item\label{ItIMainDecay} $g$ approaches the Schwarzschild metric in a quantitative manner,
    \[
      g = g_\bhm + r^{-1}h,
    \]
    where all coefficients $h(\pa_{z^i},\pa_{z^j})$ are uniformly bounded. More precisely, if $L=\pa_t+\pa_{r_*}$ and $\ubar L=\pa_t-\pa_{r_*}$ denote outgoing and incoming null vector fields for the Schwarzschild metric, and $\Omega$ denotes an arbitrary vector field on $\Sph^2$, then\footnote{In the main body of the text, we will use a more convenient notation for metric coefficients, with $h(L,L)$, $h(\ubar L,\ubar L)$, $h(L,r^{-1}\Omega)$ denoted $h_{0 0}$, $h_{1 1}$, $h_{0\bar a}$ etc; see Lemma~\ref{LemmaNPCoeff}.}
    \begin{equation}
    \label{EqIMainDecay}
      |h(L,L)|,\ |h(L,r^{-1}\Omega)|,\ |\tr_{\slg}h|,\ |h(L,\ubar L)|,\ |h(\ubar L,\Omega)| \lesssim r^{-\ell_0+},
    \end{equation}
    while the trace-free part of the restriction of $h$ to $T\Sph^2$ and the component $h(\ubar L,\ubar L)$ have have smooth limits as $r\to\infty$, $|r_*-t|\lesssim 1$, with $(r_*-t)^{-\ell_0}$ decay.
  \end{enumerate}
\end{thm}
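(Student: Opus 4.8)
The plan is to fix the modified wave coordinate gauge relative to $g_\bhm$, which turns $\Ric(g)=0$ into a quasilinear hyperbolic system for $g$; to solve that system semiglobally on the characteristic region $\Omega$ --- the $g_\bhm$-domain of dependence of $\Sigma\cap\{r_*>R+1\}$, bounded to the past by $t^{-1}(0)$ and laterally by the outgoing $g_\bhm$-null cone $\{r_*-t=R+1\}$ --- while carrying along the sharp decay recorded in \eqref{EqIMainDecay}; and finally to propagate the gauge condition and the constraints so that the resulting $g$ genuinely solves the Einstein vacuum equations and realizes the Cauchy data $(\gamma,k)$.

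First I would set up the gauge-fixed equation. Let $\Upsilon(g)$ denote the one-form measuring the failure of $\mathrm{id}\colon(\Omega,g)\to(\Omega,g_\bhm)$ to be a wave map, modified by carefully chosen zeroth order terms --- this is the content of \eqref{EqIYGauge}--\eqref{EqIY0} --- so that the slowly decaying metric components drop out of $\Upsilon$. Imposing $\Ric(g)-\half(\delta_g^*+\mathrm{CD})\Upsilon(g)=0$, where the constraint-damping modification $\mathrm{CD}$ is designed so that $\Upsilon$ itself satisfies a damped wave equation, one obtains an equation of the schematic form $\BoxGauge g = Q(g)(\nabla g,\nabla g)$ plus lower order terms. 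A standard argument via the second Bianchi identity then shows that if $\gamma,k$ satisfy the constraints and the Cauchy data for $g$ are arranged so that $\Upsilon$ and its transversal derivative vanish on $t^{-1}(0)$, then $\Upsilon\equiv0$ on all of $\Omega$; hence $\Ric(g)=0$, $g$ is in the prescribed modified wave coordinate gauge, and the induced first and second fundamental forms at $\Sigma$ are $\gamma$ and $k$.

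The analytic heart of the proof is the semiglobal solvability of $\BoxGauge g = Q(g)(\nabla g,\nabla g)$ (plus lower order terms) on $\Omega$ with the decay of \eqref{EqIMainDecay}. I would compactify $\Omega$ to a manifold with corners, adjoining a face at null infinity $\{r=\infty\}$ and blowing up the relevant corners, and work in weighted b-Sobolev spaces near the light cone and weighted edge-type Sobolev spaces near null infinity, chosen so that the weight $r^{-1/2+\ell_0}$ in \eqref{EqIMainSmall} is exactly what is propagated. The key structural input, built into the modified gauge, is that $Q$ satisfies a weak null condition: decomposed in the null frame $(L,\ubar L,\text{spherical})$, every component of $h$ gains an extra factor $r^{-\ell_0+}$ except $h(\ubar L,\ubar L)$ and the trace-free part of $h|_{T\Sph^2}$, which are sourced only by genuine null forms and therefore merely acquire finite limits at $\{r=\infty\}$ --- the radiation field. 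Energy estimates then close the argument: a weighted $L^2$ estimate for $\BoxGauge$, using the positive friction contribution of $\mathrm{CD}$ and the favorable sign of the flux through the characteristic lateral boundary, commuted with the b- and edge vector fields $r\nabla$, yields an a priori bound that dominates the quadratic nonlinearity once $\eps$ is small; the quasilinear character is harmless because the principal coefficients of $\BoxGauge$ differ from those of $g_\bhm$ by terms controlled in the same norm, and a continuity/bootstrap argument produces the solution on all of $\Omega$. Sobolev embedding on the compactified space then upgrades the weighted energy bounds to the pointwise estimates \eqref{EqIMainDecay} and to the existence of limits for $h(\ubar L,\ubar L)$ and the shear, giving properties (1)--(4).

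I expect the main obstacle to be the interlocking design of the gauge modification and of the energy estimate: the zeroth order correction to $\Upsilon$ must simultaneously remove the dangerous metric components, preserve the hyperbolicity and the constraint-propagation structure of the reduced system, and render the null structure of $Q$ compatible with the weights of the edge Sobolev spaces so that commuting with $r\nabla$ costs nothing. Propagating the sharp decay --- genuine $r^{-\ell_0+}$ decay for the ``good'' components versus a finite limit at null infinity together with $(r_*-t)^{-\ell_0}$ decay toward the light cone for $h(\ubar L,\ubar L)$ and the shear --- through all of these commutators is the most delicate part of the bookkeeping.
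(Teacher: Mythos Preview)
Your outline is broadly correct and lands on the same architecture as the paper --- gauge-fixed quasilinear system, compactification with edge-b function spaces near null infinity, energy estimates, and propagation of the gauge via Bianchi --- but it misidentifies the mechanism by which the sharp decay \eqref{EqIMainDecay} is obtained, and this is not a cosmetic point.

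You attribute the improved decay of the ``good'' components to a \emph{weak null condition} for the quadratic form $Q$, closed directly in the weighted energy estimate. That is the Lindblad--Rodnianski mechanism for standard harmonic gauge; the paper's novel gauge is designed precisely to \emph{bypass} that route. What actually happens is a two-step scheme. First, a tame energy estimate (Proposition~\ref{PropNQ}) is proved in edge-b Sobolev spaces with \emph{weak} weights at $\scri^+$ (any $\alpha_{\!\scri}<\min(\alpha_0,0)$), giving only baseline regularity and no improved decay for any component. Second --- and this is the crux --- the leading part of $L_g$ at $\scri^+$ is $-2(\rho_{\!\scri}\pa_{\rho_{\!\scri}}-A_g)(\rho_0\pa_{\rho_0}-\rho_{\!\scri}\pa_{\rho_{\!\scri}})$, where $A_g$ is an explicit endomorphism that is \emph{block lower-triangular} in the splitting $\ran\pi^{\cC\Ups}\oplus\ran\slpi_0\oplus\ran\pi_{1 1}$, with diagonal eigenvalues $\gamma^\cC,2\gamma^\cC,-\gamma^\Ups,-2\gamma^\Ups$ on the $\pi^{\cC\Ups}$ block, $0$ on $\slpi_0$, and $-2\gamma^\Ups$ on $\pi_{1 1}$. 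The sharp decay is then recovered (Theorem~\ref{ThmNY}) by successively integrating this regular-singular ODE in $\rho_{\!\scri}$: the positive eigenvalues on $\pi^{\cC\Ups}$ force those components to decay like $\rho_{\!\scri}^{\ell_{\!\scri}}$, while the zero eigenvalue on $\slpi_0$ produces a radiation field, and the $(1,1)$ component picks up a leading term sourced by the quadratic $|\pa_1\slpi_0 h|^2$ (Corollary~\ref{CorNENonlin}). The constraint damping and the gauge modification enter by fixing the \emph{eigenvalues} of $A_g$, not by adding friction to the energy estimate in the way you describe.

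A smaller methodological difference: you propose a bootstrap/continuity argument, whereas the paper closes the nonlinear problem by Nash--Moser iteration (Corollary~\ref{CorNYStab}), using the tame estimates from Proposition~\ref{PropNQ} and Theorem~\ref{ThmNY} with a fixed loss of derivatives. A bootstrap is in principle viable, but the paper's scheme separates cleanly: linear tame estimate $\to$ ODE-based recovery of decay $\to$ Nash--Moser; your sketch blurs these into a single energy argument that, as written, would not yield the decay hierarchy of Definition~\ref{DefNPMetrics}.
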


More generally, we prove a semiglobal existence theorem and the same asymptotics for the solution $g$ of a quasilinear hyperbolic (gauge-fixed) version of the Einstein equations for general (i.e.\ not necessarily arising from an initial data set) suitably decaying and regular Cauchy data for $h$; see Corollary~\ref{CorNYStab}. Through a combination of the new gauge with constraint damping and a simple nonlinear iteration scheme, we are able to obtain these asymptotics in one fell swoop.

\begin{figure}[!ht]
\centering
\includegraphics{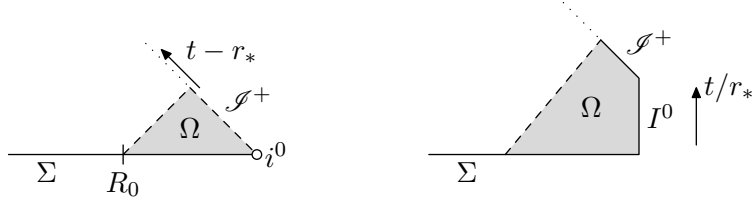}
\caption{Illustration of Theorem~\ref{ThmIMain}, on the left in a Penrose-diagrammatic fashion, and on the right in the blow-up of the Penrose diagram at spacelike infinity $i^0$.}
\label{FigIMain}
\end{figure}

We recall that Christodoulou--Klainerman \cite{ChristodoulouKlainermanStability} gave the first proof of the nonlinear stability of Minkowski space, with initial data given on all of $\R^3$ (requiring stronger decay $\ell_0>\half$ but less regularity); the evolving spacetime metric is geodesically complete. Klainerman--Nicol\`o \cite{KlainermanNicoloEvolution} gave a new proof of the stability of the exterior region (as in Theorem~\ref{ThmIMain}) using a double null foliation; see \cite{ShenMinkExtStab} for improvements. Earlier work by Friedrich \cite{FriedrichStability} established the global nonlinear stability for special initial data $(\gamma,k)$ which are equal to $(\gamma_\bhm,0)$ outside a compact set; the existence of such data was proved by Corvino \cite{CorvinoScalar,ChruscielDelayMapping}. Bieri \cite{BieriZipserStability} lowered the decay assumptions to $\ell_0>-\half$ and required only $N=3$ derivatives on the initial data. (There is a vast literature on extensions and variants of the nonlinear stability problem on asymptotically flat spacetimes, including \cite{WangThesis,LeFlochMaEinsteinMassive,BieriChruscielADMBondi,ChruscielScriPiece,TaylorEinsteinVlasov,LindbladTaylorVlasov,FajmanJoudiouxSmuleviciEinsteinVlasov,KlainermanSzeftelPolarized,DafermosHolzegelRodnianskiSchwarzschildStability,AnderssonBackdahlBlueMaKerr,HaefnerHintzVasyKerr,IonescuPausaderKG,DafermosHolzegelRodnianskiTaylorSchwarzschild,KlainermanSzeftelKerr,WangMinkEinsteinVlasov}.)

Closely related to the present work is the global stability proof by Lindblad--Rodnianski \cite{LindbladRodnianskiGlobalExistence,LindbladRodnianskiGlobalStability} in the standard wave coordinate gauge $\Box_g z^\mu=0$; due to logarithmic divergences arising in simplistic formal nonlinear iteration arguments (see \cite[\S1]{LindbladRodnianskiGlobalStability} and also~\eqref{EqIModel1} below), this gauge condition was considered unsuitable for a proof of global stability until Lindblad--Rodnianski \cite{LindbladRodnianskiWeakNull} discovered that the Einstein equations in wave coordinates satisfy a \emph{weak null condition} at null infinity $\scri^+$. Lindblad \cite{LindbladAsymptotics} subsequently proved sharp decay at $\scri^+$ by using vector field multipliers and commutators adapted to the large scale Schwarzschild geometry (rather than the Minkowski geometry as in \cite{LindbladRodnianskiGlobalStability}). In the wave coordinate gauge, only the first three components in~\eqref{EqIMainDecay} have the stated decay, while all other components of $h$ have leading order terms at $\scri^+$, with the exception of $h(\ubar L,\ubar L)$ which blows up logarithmically as $r\to\infty$. This result was extended by the author and Vasy in \cite{HintzVasyMink4} where it was shown that the geodesically complete spacetime metric $g$, evolving from initial data close to the trivial data, is polyhomogeneous on a compactification of $\R^4$ to a manifold with corners; this result utilizes a wave map gauge relative to the Schwarzschild metric (discussed further below). Furthermore, in this gauge, \cite{HintzVasyMink4} clarified the nature of the logarithmically divergent leading order term of $h(\ubar L,\ubar L)$ by relating its average over spherical sections of null infinity to the Bondi mass \cite{BondivdBMetznerGravity,ChristodoulouNonlinear}. In a different direction, Keir \cite{KeirWeak}, focusing on the analysis of weak null conditions, proved the global well-posedness of the Einstein equations in harmonic coordinates (in the standard formulation, i.e.\ without constraint damping) for general small Cauchy data.

By contrast, in the novel gauge introduced here, $h(\ubar L,\ubar L)$ remains bounded, the spherical averages of its limit at $\scri^+$ being related to the Bondi mass; and the trace-free spherical part of $h$ directly encodes the Bondi news function and outgoing energy flux, as indicated in Remark~\ref{RmkNYBondi} (following \cite[\S8]{HintzVasyMink4}). All other metric components have faster decay; in this sense, our gauge suppresses all `non-physical' degrees of freedom to leading order at null infinity. We discuss this in~\S\ref{SsIY}.

In addition to this improved decay, our analysis takes full advantage of notions from geometric singular analysis, concretely the notions of b- and edge-metrics and -operators going back to Melrose \cite{MelroseAPS} and Mazzeo \cite{MazzeoEdge}; see~\S\ref{SsIeb}.

\subsection{Constraint damping and novel gauge}
\label{SsIY}

A natural \emph{generalized wave coordinate gauge} or \emph{generalized harmonic gauge} for a spacetime metric $g$ which is a perturbation of $g_\bhm$ is the \emph{wave map gauge} relative to $g_\bhm$: this requires the identity map $(\Omega,g)\to(\Omega,g_\bhm)$ to be a wave map. One can solve the Einstein vacuum equations in this gauge by solving the quasilinear wave equation
\begin{equation}
\label{EqIY0}
  \Ric(g) - \delta_g^*\Ups(g;g_\bhm) = 0,\qquad \Ups(g;g_\bhm)^\kappa:=g^{\mu\nu}(\Gamma(g)_{\mu\nu}^\kappa-\Gamma(g_\bhm)_{\mu\nu}^\kappa)\ \ \text{(gauge 1-form)},
\end{equation}
for $g$; here, $(\delta_g^*\Ups)_{\mu\nu}=\half(\Ups_{\mu;\nu}+\Ups_{\nu;\mu})$ is the symmetric gradient. Constraint damping amounts to modifying $\delta_g^*$ by zeroth order terms; it was introduced in \cite{GundlachCalabreseHinderMartinConstraintDamping,BrodbeckFrittelliHubnerReulaSCP} and used to by Pretorius \cite{PretoriusBinaryBlackHole} as a device in numerical evolution schemes to ensure that violations of the gauge condition are damped. It also played a key role in the recent proofs of black hole stability in cosmological spacetimes \cite{HintzVasyKdSStability,HintzKNdSStability}: when solving linearizations of equation~\eqref{EqIY0} (with nontrivial right hand side) in a Nash--Moser iteration scheme, constraint damping ensures improved decay of $\Ups(g;g_\bhm)$ throughout the iteration. Concretely, we modify $(\delta_g^*\Ups)_{\mu\nu}$ in~\eqref{EqIY0} by a zeroth order term not involving derivatives of $g$ to
\[
  (\delta_{g,E^\cC}^*\Ups)_{\mu\nu} := \half(\Ups_{\mu;\nu}+\Ups_{\nu;\mu}) + 2\gamma^\cC(\cd_\mu\Ups_\nu+\cd_\nu\Ups_\mu) - \gamma^\cC\Ups_\kappa\cd^\kappa g_{\mu\nu},\qquad E^\cC=(\cd,\gamma^\cC),
\]
where we take $\cd=r^{-1}\,\dd t$ and $\gamma^\cC>0$. (This is similar to the definition of $\tilde\delta^*$ in \cite[\S3.3]{HintzVasyMink4}.) Usage of $\delta_{g,E^\cC}^*$ instead of $\delta_g^*$ in the modified gauge-fixed Einstein vacuum equations
\begin{equation}
\label{EqIY1}
  \Ric(g) - \delta_{g,E^\cC}^*\Ups(g;g_\bhm) = 0
\end{equation}
does \emph{not} change the gauge in which one solves $\Ric(g)=0$. However, it ensures, for general Cauchy data which may violate the constraint equations, that $\Ups$ satisfies a modified (here: \emph{damped}) wave equation $\delta_g\sfG_g\delta_{g,E^\cC}^*\Ups(g;g_\bhm)=0$ by virtue of the second Bianchi identity. (Here, $\sfG_g=1-\half g\tr_g$ is the trace reversal operator, and $\delta_g$ is the negative divergence.) On Minkowski space and with general Cauchy data, this ensures that $\Ups(g;g_\bhm)$ decays faster than $r^{-1}$ at null infinity, which concretely means that certain metric components---$4$ in number, matching the number of components of the 1-form $\Ups(g;g_\bhm)$---of $r^{-1}h=g-g_\bhm$ (in fact the first three in~\eqref{EqIMainDecay}, with $\Omega$ accounting for $2$ components) similarly have stronger decay; notably, the component $h(L,L)$ controls the deviation of outgoing light cones for the metric $g$ from the Schwarzschildean ones. This improved decay and the resulting fixing of the geometry near null infinity allowed for an application of a global nonlinear iteration scheme for solving~\eqref{EqIY1} in \cite{HintzVasyMink4}.

The new gauge we introduce here is a modification of $\Ups(g;g_\bhm)$ by a zeroth order term,
\begin{equation}
\label{EqIYGauge}
  \Ups_{E^\Ups}(g;g_\bhm)_\mu := \Ups(g;g_\bhm)_\mu - 2\gamma^\Ups\cd^\nu(g-g_\bhm)_{\mu\nu};
\end{equation}
we again use $\cd=r^{-1}\dd t$, and $\gamma^\Ups<0$. (See Definitions~\ref{DefLMod} and \ref{DefNEOp}, and Remark~\ref{RmkLDuality} for the duality of gauge modifications and constraint damping.) The gauge-fixed Einstein vacuum equations we shall solve in the proof of Theorem~\ref{ThmIMain} are then
\begin{equation}
\label{EqIY2}
  \Ric(g) - \delta_{g,E^\cC}^*\Ups_{E^\Ups}(g;g_\bhm) = 0.
\end{equation}
The coefficients of $r^{-1}h=g-g_\bhm$ which have improved decay by virtue of $\Ups_{E^\Ups}(g;g_\bhm)=0$ (or strong decay of $\Ups_{E^\Ups}(g;g_\bhm)$ at $\scri^+$ due to constraint damping) are the same as in the formulation~\eqref{EqIY1}. On the other hand, upon \emph{combining} the new gauge with the ungauged Einstein operator, $2$ further components of $h$ (the final two in~\eqref{EqIMainDecay}) have improved decay, and furthermore $h(\ubar L,\ubar L)$ does not diverge logarithmically anymore; see~\S\ref{SsNY}. We alert the reader to Appendix~\ref{SMax} where gauge changes and constraint damping of this sort are discussed in the context of the Maxwell equations; there, we also give a more conceptual explanation for why the gauge modification has the advertised effect.

We substantiate this discussion schematically in terms of the often used model for couplings and semilinear interactions for the Einstein vacuum equations in harmonic gauge,
\[
  \Box_g\phi_1=(\pa_t^2-\pa_x^2)\phi_1=0,\qquad \Box_g\phi_2=(\pa_t\phi_1)^2,
\]
with $g$ the Minkowski metric (see \cite[\S1]{LindbladRodnianskiGlobalStability}). Here,
\begin{enumerate}
\item $\phi_1$ encodes gravitational radiation escaping to null infinity and corresponds to the trace-free spherical part of metric perturbations $r^{-1}h$ above;
\item $\phi_2\sim h(\ubar L,\ubar L)$ encodes the Bondi mass.
\end{enumerate}
The $\cO(r^{-1})$ decay of $\phi_1$ creates $\cO(r^{-2})$ forcing for $\phi_2$, leading to the logarithmic divergence of $\phi_2=\cO(r^{-1}\log r)$ at $\scri^+$. We supplement this by two more equations,
\begin{equation}
\label{EqIModel1}
  \Box_g\phi_1 = 0,\qquad
  \Box_g\phi_2 = (\pa_t\phi_1)^2, \qquad
  \Box_g\phi_3=0,\qquad
  \Box_g\phi_4=0,
\end{equation}
where we ignore couplings at sub-leading order at $\scri^+$. Here,
\begin{enumerate}
\setcounter{enumi}{2}
\item $\phi_3$ models the $4$ metric coefficients whose leading order behavior at $\scri^+$ is constrained by the wave coordinate condition, as discussed after~\eqref{EqIY1};
\item $\phi_4$ models the remaining $3$ metric coefficients which are affected only once one combines the new gauge with the ungauged Einstein equations, and which do not encode any leading order physical degrees of freedom at $\scri^+$.
\end{enumerate}

Constraint damping turns the equation for $\phi_3$ into a damped wave equation of the sort
\[
  (\Box_g + 2\gamma^\cC r^{-1}\pa_t)\phi_3 = 0;
\]
this leads to $\phi_3=\cO(r^{-1-\gamma^\cC})$ decay at null infinity. Since a main effect of constraint damping is of quasilinear nature (namely, it fixes the geometry near null infinity), a more precise model than~\eqref{EqIModel1} replaces all occurrences of $g$ by ``$g+\phi_3$''; this makes apparent the advantage of ensuring better decay for $\phi_3$.

The improvement afforded by the gauge change leads to the schematic model
\begin{equation}
\label{EqIModel3}
\begin{aligned}
  \Box_{g+\phi_3}\phi_1&=0, &\quad
  (\Box_{g+\phi_3}-2\gamma^\Ups r^{-1}\pa_t)\phi_2 &= (\pa_t\phi_1)^2, \\
  (\Box_{g+\phi_3}+2\gamma^\cC r^{-1}\pa_t)\phi_3&=0, &\quad
  (\Box_{g+\phi_3}-2\gamma^\Ups r^{-1}\pa_t)\phi_4&=0.
\end{aligned}
\end{equation}
Thus, $\phi_3$ and $\phi_4$ have better-than-$r^{-1}$ decay at $\scri^+$, and the $\cO(r^{-2})$ forcing term for $\phi_2$ is no longer borderline, and hence $\phi_2=\cO(r^{-1})$. This leaves $\phi_1,\phi_2$ as the only components with nontrivial radiation fields; the other components ($\phi_3$ and $\phi_4$) decay faster. The relationship between the model~\eqref{EqIModel3} and the gauge-fixed Einstein equations is further discussed at the end of \S\ref{SL}, after Definition~\ref{DefNPMetrics}, and after the statement of Corollary~\ref{CorNENonlin}.

\subsection{Energy estimates and edge-b-metrics}
\label{SsIeb}

Our analysis here is based on energy estimates. The rough `background' estimate uses the vector field multiplier
\[
  W=\Bigl(\frac{r}{r_*-t}\Bigr)^{2\alpha_{\!\scri}}(r_*-t)^{2\alpha_0}\bigl(r L + (r_*-t)\pa_t\bigr),\qquad L=\pa_t+\pa_{r_*},
\]
for suitable weights $\alpha_0,\alpha_{\!\scri}\in\R$; this is stronger than $\pa_t$ and weaker than the conformal Morawetz vector field, while still being compatible with the types of metric perturbations one encounters in the stability problem. Concretely, usage of $W$ allows one to control the derivatives of the metric perturbation along
\begin{equation}
\label{EqIVf}
\begin{gathered}
  r(\pa_t+\pa_{r_*}),\quad (r_*-t)(\pa_t-\pa_{r_*}) \ \text{(weighted approx.\ outgoing/incoming derivative)}, \\
  \Bigl(\frac{r_*-t}{r}\Bigr)^{1/2}\Omega\ \ \text{(spherical vector fields with $r^{-1/2}$ decay at null infinity)},
\end{gathered}
\end{equation}
in a weighted spacetime $L^2$-space. (One can replace the incoming null vector field by the scaling vector field $t\pa_t+r_*\pa_{r_*}$.) Higher regularity is proved by commuting stronger vector fields (see Remark~\ref{RmkNBeb}) through the equation; a minor simplification is that due to our strong background estimate we can relax the requirements on these commutator vector fields, cf.\ Lemma~\ref{LemmaNMOp}. By contrast, in \cite{LindbladRodnianskiGlobalStability}, the background estimate is weaker than the edge-b-estimate, and thus the commutator vector fields need to be chosen more carefully, much as in \cite{KlainermanNullCondition}.

Besides proving the improved asymptotics in the new gauge, a secondary goal of this paper is to contribute to the development of the global analytic point of view for nonelliptic PDE using techniques from geometric singular analysis. Concretely, as discussed in detail in~\S\ref{SsNM}, the Schwarzschild metric is a weighted \emph{edge-metric} (with Lorentzian signature) at null infinity in the sense of Mazzeo \cite{MazzeoEdge}; see Corollary~\ref{CorNMLot}. Indeed, pulling back $g_\bhm$ to the interior of
\begin{equation}
\label{EqICpt}
  \R_{t_*} \times [0,\infty)_{x_{\!\scri}} \times \Sph^2,\qquad t_*=t-r_*,\quad x_{\!\scri}=r^{-1/2},
\end{equation}
with $x_{\!\scri}^{-1}(0)$ being (the interior of) null infinity, one computes
\[
  x_{\!\scri}^2 g_\bhm = 4\,\dd t_*\frac{\dd x_{\!\scri}}{x_{\!\scri}} + \slg_{a b}\frac{\dd x^a}{x_{\!\scri}}\frac{\dd x^b}{x_{\!\scri}} + \text{[terms with more decay]}
\]
where $x^2,x^3$ are local coordinates on $\Sph^2$. Dual to the 1-forms $\dd t_*$, $\tfrac{\dd x_{\!\scri}}{x_{\!\scri}}$, $\tfrac{\dd x^a}{x_{\!\scri}}$ appearing here are the vector fields $\pa_{t_*}$, $x_{\!\scri}\pa_{x_{\!\scri}}$, $x_{\!\scri}\pa_{x^a}$, which are precisely those smooth vector fields on the manifold~\eqref{EqICpt} which are tangent to the fibers of the fibration $(t_*,\omega)\mapsto\omega$ of the boundary $x_{\!\scri}^{-1}(0)$; and linear combinations of these vector fields are precisely those listed in~\eqref{EqIVf}. The compactification of the domain $\Omega$ in Theorem~\ref{ThmIMain}, as shown on the right in Figure~\ref{FigIMain}, has a second boundary hypersurface $I^0$ where $g_\bhm$ is a weighted b-metric \cite{MelroseAPS}; globally, $g_\bhm$ is a weighted edge-b-metric, or \emph{eb-metric} for short. This observation is key for the streamlining of the functional analytic setup in the present paper as compared to \cite{HintzVasyMink4}.\footnote{The companion paper \cite{HintzVasyScrieb}, joint with Vasy, goes significantly further by giving a (microlocal) linear analysis of a large class of tensorial wave type equations; we refer to its introduction for a more detailed discussion of edge-b-analysis near null infinity. The present paper only uses physical space techniques.}

The metric perturbations arising in Theorem~\ref{ThmIMain} are lower order perturbations of $g_\bhm$ as symmetric edge-b-2-tensors; see Lemma~\ref{LemmaNPCoeff}. Thus, regularity with respect to the vector fields~\eqref{EqIVf} is a very natural notion. Unlike in Riemannian geometry, there are typically many different types of rescaled vector bundles and boundary fibration structures with respect to which a given Lorentzian metric is nondegenerate down to boundaries at infinity, such as $x_{\!\scri}^{-1}(0)$ in~\eqref{EqICpt}. And indeed, while the edge-b point of view is convenient for the purpose of proving \emph{estimates}, controlling the \emph{geometry} of metric perturbations is more conveniently done in terms of the standard vector fields $\pa_{z^\mu}$ on $\R^4$ or linear combinations thereof as used in~\eqref{EqIMainDecay} and discussed in detail around Definition~\ref{DefNMVb}; cf.\ the significance of $h(L,L)$ for controlling outgoing light cones. Since the Einstein equations are \emph{quasilinear}, it is important to understand the relationship between the two points of view (Lemma~\ref{LemmaNMBundleRel}).

\subsection{Structure of the paper}

In~\S\ref{SL}, we present calculations for the linearized gauge-fixed Einstein vacuum equations on Minkowski space, with constraint damping and the (linearization of the) novel gauge, which lend support to the claims made in~\S\ref{SsIY}. In~\S\ref{SN}, we prove Theorem~\ref{ThmIMain}. We first introduce edge-b-structures in~\S\ref{SsND}; the partial compactification of the spacetime on which we shall work and a basic edge-b-energy estimate are presented in~\S\ref{SsNM}. The stability proof starts in~\S\ref{SsNP} where we define the class of metric perturbations arising in the stability problem in our new gauge. In~\S\ref{SsNE}, we define the modified gauge-fixed Einstein operator and describe its structure as an edge-b-differential operator. This is used in~\S\ref{SsNQ} to prove (tame) energy estimates and in~\S\ref{SsNY} to obtain sharp decay for metric perturbations using a Nash--Moser iteration. Appendix~\ref{SMax} illustrates the choice of gauge and constraint damping in the simpler setting of the Maxwell equations.

\subsection*{Acknowledgments}

The author is grateful for support from NSF grant DMS-1955614. This material is based upon work supported by the NSF under grant No.\ DMS-1440140 while the author was in residence at MSRI during the Fall 2019 semester. Part of this work was conducted during the period the author held Clay and Sloan Research Fellowships.

\section{Motivation; calculations on Minkowski space}
\label{SL}

Fix a background metric $g^0$. Then the operator
\begin{equation}
\label{EqLPUps}
  P(g) = \Ric(g) - \delta_g^*\Ups(g;g^0),\qquad \Ups(g;g^0)=g(g^0)^{-1}\delta_g\sfG_g g^0,
\end{equation}
is a quasilinear wave operator when $g$ is a Lorentzian metric; that is, its linearization is principally scalar, and its principal part is equal to $\half\Box_g$ (see below).

\begin{lemma}[Linearizations]
\label{LemmaLPLin}
  The linearization of the Ricci tensor is
  \begin{equation}
  \label{EqLPLinRic}
    D_g\Ric = \half\Box_g - \delta_g^*\delta_g\sfG_g + \sR_g,\quad
    (\sR_g u)_{\mu\nu}=R^\kappa{}_{\mu\nu}{}^\lambda u_{\kappa\lambda}+\half(\Ric(g)_\mu{}^\kappa u_{\kappa\nu}+\Ric(g)^\kappa{}_\nu u_{\mu\kappa}),
  \end{equation}
  where $R$ is the Riemann curvature tensor of $g$, and $\Box_g=-\tr_g\nabla^2$. Moreover,
  \begin{equation}
  \label{EqLPLinUps}
  \begin{split}
    &\Ups(g;g^0)_\mu=g_{\mu\nu}g^{\kappa\lambda}(\Gamma(g)_{\kappa\lambda}^\nu-\Gamma(g^0)_{\kappa\lambda}^\nu), \\
    &D_g\Ups(-;g^0) = -\delta_g\sfG_g - \sC_g + \sY_g, \\
    &\qquad \sC_g(u)_\kappa = g_{\kappa\lambda}C_{\mu\nu}^\lambda u^{\mu\nu},\quad
    C_{\mu\nu}^\lambda = \Gamma(g)_{\mu\nu}^\lambda - \Gamma(g^0)_{\mu\nu}^\lambda,\qquad
    \sY_g(u)_\kappa = \Ups(g;g^0)^\lambda u_{\kappa\lambda}.
  \end{split}
  \end{equation}
\end{lemma}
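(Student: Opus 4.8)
The plan is to compute each linearization directly from the coordinate formulas for the Ricci tensor and the gauge 1-form, organizing the terms so that the principal (second-order) part separates cleanly from the curvature and lower-order contributions. First I would recall that in local coordinates $\Ric(g)_{\mu\nu} = \pa_\kappa\Gamma_{\mu\nu}^\kappa - \pa_\nu\Gamma_{\mu\kappa}^\kappa + \Gamma_{\mu\nu}^\kappa\Gamma_{\kappa\lambda}^\lambda - \Gamma_{\mu\lambda}^\kappa\Gamma_{\nu\kappa}^\lambda$, and that the linearization of the Christoffel symbols is the tensor $\dot\Gamma(u)_{\mu\nu}^\kappa = \half g^{\kappa\lambda}(\nabla_\mu u_{\nu\lambda} + \nabla_\nu u_{\mu\lambda} - \nabla_\lambda u_{\mu\nu})$. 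Feeding this into the variation of $\Ric$ and commuting covariant derivatives to collect the terms into which Riemann curvature enters, one obtains the Lichnerowicz-type formula $D_g\Ric(u) = \half\Delta_L u - \delta_g^*\delta_g\sfG_g u$ up to curvature corrections; rewriting the Lichnerowicz Laplacian in terms of the rough Laplacian $\Box_g = -\tr_g\nabla^2$ and the curvature endomorphism $\sR_g$ produces exactly \eqref{EqLPLinRic}. The term $\delta_g^*\delta_g\sfG_g u$ arises from the $\pa_\nu\Gamma_{\mu\kappa}^\kappa$ contribution together with the symmetrization, and tracking signs/trace-reversal factors carefully is what makes the coefficient of $\sfG_g$ come out right.

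For the second formula I would start from $\Ups(g;g^0)_\mu = g_{\mu\nu}g^{\kappa\lambda}(\Gamma(g)_{\kappa\lambda}^\nu - \Gamma(g^0)_{\kappa\lambda}^\nu)$, noting first that this is the index-lowered version of $\Ups^\nu = g^{\kappa\lambda}C_{\kappa\lambda}^\nu$ with $C = \Gamma(g) - \Gamma(g^0)$ a genuine tensor (difference of two connections). Linearizing, three groups of terms appear: (i) the variation of the inner $\Gamma(g)$, which by the same $\dot\Gamma(u)$ formula contracts against $g_{\mu\nu}g^{\kappa\lambda}$ to give precisely $-(\delta_g\sfG_g u)_\mu$ after the standard manipulation $g^{\kappa\lambda}\dot\Gamma(u)_{\kappa\lambda}^\nu = -(\delta_g\sfG_g u)^\nu$; (ii) the variation of the contravariant metric $g^{\kappa\lambda}$, i.e.\ $\dot g^{\kappa\lambda} = -u^{\kappa\lambda}$, hitting the tensor $C_{\kappa\lambda}^\nu$ and yielding $-g_{\mu\nu}C_{\kappa\lambda}^\nu u^{\kappa\lambda} = -(\sC_g u)_\mu$; (iii) the variation of the outer $g_{\mu\nu}$, i.e.\ $u_{\mu\nu}$, multiplying the unlinearized $\Ups^\nu$ and giving $\Ups^\nu u_{\mu\nu} = (\sY_g u)_\mu$. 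Collecting these three pieces gives $D_g\Ups(-;g^0) = -\delta_g\sfG_g - \sC_g + \sY_g$.

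The main obstacle is bookkeeping: getting every sign, every trace-reversal factor $\half$, and every index placement correct in the Ricci computation, where the commutator of covariant derivatives must be invoked twice and the result reassembled into the stated closed form for $\sR_g$. In particular, one must verify that the non-principal contributions that are \emph{not} curvature genuinely cancel — the only surviving zeroth-order terms being the $R^\kappa{}_{\mu\nu}{}^\lambda u_{\kappa\lambda}$ piece and the two Ricci terms — and this cancellation is the delicate point. The $\Ups$ computation is comparatively routine once one recognizes $C$ as a tensor, so I would present that second formula essentially as the three-term product rule applied to $g g^{-1}(\Gamma - \Gamma^0)$, and devote the bulk of the argument to the careful curvature accounting in \eqref{EqLPLinRic}, possibly citing a standard reference (e.g.\ the formula for the linearized Ricci tensor) to shorten the purely mechanical parts.
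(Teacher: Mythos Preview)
Your proposal is correct and outlines precisely the standard computation; the paper itself does not spell out any of this but simply cites \cite{GrahamLeeConformalEinstein} and \cite[\S3.3]{HintzVasyMink4} for the result. Your suggestion at the end to cite a reference for the mechanical Ricci linearization is exactly what the paper does, so your write-up is if anything more detailed than what appears there.
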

\begin{proof}
  See \cite{GrahamLeeConformalEinstein}, \cite[\S3.3]{HintzVasyMink4}.
\end{proof}

In the linearization of $P$ around $g=g^0$, given by $D_g P = D_g\Ric + \delta_g^*\delta_g\sfG_g$, we now generalize $\delta_g^*$, resp.\ $\delta_g$, for the purpose of (linearized) constraint damping, resp.\ gauge change, as follows:

\begin{definition}[Modifications]
\label{DefLMod}
  Let $E^\cC=(\cd^\cC,\gamma^\cC)$, where $\cd^\cC$ is a 1-form on spacetime, and $\gamma^\cC\in\R$. The \emph{modified symmetric gradient} is then defined as
  \begin{equation}
  \label{EqLModCD}
    \delta_{g,E^\cC}^* := \delta_g^* + \gamma^\cC\bigl(2\cd^\cC\otimes_s(-) - g \iota_{g^{-1}(\cd^\cC)}\bigr).
  \end{equation}
  For a pair $E^\Ups=(\cd^\Ups,\gamma^\Ups)$, we define the \emph{modified divergence} by
  \begin{equation}
  \label{EqLModGC}
    \delta_{g,E^\Ups} = (\delta_{g,E^\Ups}^*)^* = \delta_g + \gamma^\Ups\bigl(2\iota_{g^{-1}(\cd^\Ups)} - \cd^\Ups\tr_g\bigr).
  \end{equation}
  Finally, the \emph{linearized modified gauge-fixed Einstein operator} is
  \begin{equation}
  \label{EqLEin}
    P'_{g,E^\cC,E^\Ups} := D_g\Ric + \delta_{g,E^\cC}^*\delta_{g,E^\Ups}\sfG_g.
  \end{equation}
\end{definition}

Consider now the Minkowski metric
\begin{equation}
\label{EqLMinkMet}
  \ubar g:=-\dd x^0\,\dd x^1+r^2\slg,\qquad
  x^0=t+r,\quad x^1=t-r.
\end{equation}
In this section, we study the asymptotic behavior of solutions of $P'_{\ubar g,E^\cC,E^\Ups}(r^{-1}u)=0$ at null infinity $\scri^+$, i.e.\ for bounded $x^1$ when $x^0\to\infty$. In this region, we fix
\begin{equation}
\label{EqLMod}
  \cd^\cC=\cd^\Ups=r^{-1}\,\dd t,\qquad
  \gamma^\cC\in(0,1),\quad
  \gamma^\Ups\in(-1,0).
\end{equation}
We work with the bundle splittings
\begin{equation}
\label{EqLSplit}
\begin{split}
  T^*\R^4 &= \la \dd x^0\ra \oplus \la \dd x^1\ra \oplus r T^*\Sph^2, \\
  S^2 T^*\R^4 &= \la (\dd x^0)^2\ra \oplus \la 2 \dd x^0\dd x^1 \ra \oplus (2 \dd x^0\otimes_s r T^*\Sph^2) \\
    &\quad\qquad \oplus \la(\dd x^1)^2\ra \oplus (2 \dd x^1\otimes_s r T^*\Sph^2) \oplus \la r^2\slg\ra \oplus r^2\ker\sltr.
\end{split}
\end{equation}
Thus, we rescale the spherical part of the cotangent bundle, recording e.g.\ the covector $\omega_0\,\dd x^0+\omega_1\,\dd x^1+r\slomega$ with $\slomega\in T^*\Sph^2$ as $(\omega_0,\omega_1,\slomega)$. We shall only record the `main' terms of
\[
  2 P'_{\ubar g,E^\cC,E^\Ups} = \Box_{\ubar g} + 2(\delta_{\ubar g,E^\cC}^*\delta_{\ubar g,E^\Ups}-\delta_{\ubar g}^*\delta_{\ubar g})\sfG_{\ubar g} + 2\sR_{\ubar g}
\]
and drop all `error' terms (writing `$\equiv$' for an equality up to error terms). Concretely, we assign the weights $1$, $-1$, $0$, $0$ to $r$, $\pa_0$, $\pa_1$, $\cV(\Sph^2)$ (thus regarding $r\pa_0\sim r(\pa_t+\pa_r)$ $\pa_1\sim\pa_t-\pa_r$, $\cV(\Sph^2)$ as unweighted vector fields), and only record terms of total weight $\leq 0$. In the proof of Proposition~\ref{PropNELin}, we shall find $r^2\Box_{\ubar g} r^{-1} \equiv 4 \pa_1 r\pa_0$ and expressions for $\delta_{\ubar g}$, $\delta_{\ubar g}^*$, and $\sfG_{\ubar g}$ (the first terms in~\eqref{EqNELinDiv}, \eqref{EqNELinDelstar}, and \eqref{EqNELinTrRev}, respectively), and for $\delta_{\ubar g,E^\cC}^*-\delta_{\ubar g}^*$, resp.\ $\delta_{\ubar g,E^\Ups}-\delta_{\ubar g}$ (the first terms in~\eqref{EqELinTdelDelDiff2}, resp.\ \eqref{EqNELinDelstar2}). They give
\begin{equation}
\label{EqLOp}
  L_{\ubar g,E^\cC,E^\Ups} := 2 r^2 P'_{\ubar g,E^\cC,E^\Ups} r^{-1} \equiv 2\pa_1(2 r\pa_0+A_{E^\cC,E^\Ups}),
\end{equation}
where the endomorphism $A_{E^\cC,E^\Ups}$ of $S^2 T^*\R^4$ is given by
\begin{equation}
\label{EqLAEnd}
  A_{E^\cC,E^\Ups}=
  \openbigpmatrix{3pt}
    2\gamma^\cC & 0 & 0 & 0 & 0 & 0 & 0 \\
    -\gamma^\Ups & -\gamma^\Ups & 0 & 0 & 0 & 0 & 0 \\
    0 & 0 & \gamma^\cC & 0 & 0 & 0 & 0 \\
    0 & -2\gamma^\Ups & 0 & -2\gamma^\Ups & 0 & \gamma^\cC & 0 \\
    0 & 0 & \gamma^\cC-\gamma^\Ups & 0 & -\gamma^\Ups & 0 & 0 \\
    2\gamma^\cC & 0 & 0 & 0 & 0 & \gamma^\cC & 0 \\
    0 & 0 & 0 & 0 & 0 & 0 & 0
  \closebigpmatrix
\end{equation}

Passing to $\rho_{\!\scri}=(x^0)^{-1}$ (which, in the region of bounded $x^1=t-r$, vanishes at future null infinity $\scri^+$), we note that $2 r=x^0-x^1=\rho_{\!\scri}^{-1}(1-\rho_{\!\scri} x^1)$ and $\pa_0=-\rho_{\!\scri}^2\pa_{\rho_{\!\scri}}$; thus,
\[
  r^2 P'_{\ubar g,E^\cC,E^\Ups} r^{-1}\equiv -2\pa_1(\rho_{\!\scri}\pa_{\rho_{\!\scri}}-A_{E^\cC,E^\Ups}).
\]
By standard regular-singular ODE analysis (and as previously shown rigorously in~\cite[\S3.3]{HintzVasyMink4}), we can read off the decay at $\scri^+$ of a metric perturbation $u$ solving
\begin{equation}
\label{EqLEq}
  P'_{\ubar g,E^\cC,E^\Ups}u=0
\end{equation}
from the spectral decomposition of $A_{E^\cC,E^\Ups}$.

\begin{rmk}[Duality of constraint damping and gauge change]
\label{RmkLDuality}
  By~\eqref{EqLPLinRic}, the adjoint of $D_g\Ric$ is $(D_g\Ric)^*=\sfG_g\circ D_g\Ric\circ\sfG_g$ (i.e.\ $\sfG_g\circ D_g\Ric$ is formally self-adjoint); thus,
  \begin{equation}
  \label{EqLAdj}
    \sfG_g (P'_{g,E^\cC,E^\Ups})^*\sfG_g = P'_{g,E^\Ups,E^\cC},
  \end{equation}
  demonstrating a duality between constraint damping and gauge changes. Equation~\eqref{EqLAdj} also implies $\sfG_{\ubar g} A_{E^\cC,E^\Ups}^*\sfG_{\ubar g}=-A_{E^\Ups,E^\cC}$. Since we would like as many eigenvalues as possible of $A_{E^\cC,E^\Ups}$ to be positive, this suggests taking $\gamma^\cC$ and $\gamma^\Ups$ to have opposite signs. In view of~\eqref{EqLAdj}, this forces the endomorphism $A_{E^\Ups,E^\cC}$ corresponding to $(P'_{g,E^\cC,E^\Ups})^*$ to have many \emph{negative} eigenvalues. See Appendix~\ref{SMax} for a discussion of this point in a simpler context.
\end{rmk}

To study $A_{E^\cC,E^\Ups}$, we introduce the bundle projections (respecting the splitting~\eqref{EqLSplit})
\begin{equation}
\label{EqLProj}
\begin{split}
  \pi^\cC &\colon S^2 T^*\R^4 \to \la (\dd x^0)^2 \ra \oplus (2\dd x^0\otimes_s r T^*\Sph^2) \oplus \la r^2\slg \ra, \\
  \pi^\Ups &\colon S^2 T^*\R^4 \to \la 2\dd x^0\dd x^1\ra \oplus (2\dd x^1\otimes_s r T^*\Sph^2), \\
  \slpi_0 &\colon S^2 T^*\R^4 \to r^2\ker\sltr, \\
  \pi_{1 1} &\colon S^2 T^*\R^4 \to \la(\dd x^1)^2\ra.
\end{split}
\end{equation}
Then $\pi^\cC A_{E^\cC,E^\Ups}(1-\pi^\cC)=0$; in the splitting $\ran\pi^\cC\oplus\ran(1-\pi^\cC)$, the top left block of $A_{E^\cC,E^\Ups}$ (capturing rows and columns $1,3,6$) is then
\begin{subequations}
\begin{equation}
\label{EqLApiC}
  \pi^\cC A_{E^\cC,E^\Ups}\pi^\cC = \begin{pmatrix} 2\gamma^\cC & 0 & 0 \\ 0 & \gamma^\cC & 0 \\ 2\gamma^\cC & 0 & \gamma^\cC \end{pmatrix}
\end{equation}
Thus, $\pi^\cC u$ is expected to have components of size $\cO(\rho_{\!\scri}^{1+\lambda})$ at $\scri^+$, where $\lambda=\gamma^\cC,2\gamma^\cC$. Next, the bottom right block (capturing rows $2,4,5,7$) is
\begin{equation}
\label{EqLApiCcompl}
  (1-\pi^\cC)A_{E^\cC,E^\Ups}(1-\pi^\cC) =
  \begin{pmatrix}
    -\gamma^\Ups & 0 & 0 & 0 \\
    -2\gamma^\Ups & -2\gamma^\Ups & 0 & 0 \\
    0 & 0 & -\gamma^\Ups & 0 \\
    0 & 0 & 0 & 0
  \end{pmatrix},
\end{equation}
\end{subequations}
with eigenvalues $-\gamma^\Ups$, $-2\gamma^\Ups$, and $0$. It is thus natural to further split off the trace-free spherical part (the final row) using $\slpi_0$, with $\slpi_0 A_{E^\cC,E^\Ups}\slpi_0 = 0$. In~\S\ref{SsNE}, we will see that when solving the nonlinear Einstein equations via an iteration scheme, the $\slpi_0$ part will be a source term for the $(1,1)$ component in the subsequent iteration step; this is why we further split the bundle $\ran(\pi^\Ups+\pi_{1 1})$ into the ranges of $\pi^\Ups$ (rows 2 and 5 of $A_{E^\cC,E^\Ups}$), $\pi_{1 1}$ (row $4$).

Altogether then, the solution $u$ of~\eqref{EqLEq} can be analyzed step by step for bounded $t-r$ as follows (we omit error terms throughout):
\begin{enumerate}
\item $\pi^\cC u$ satisfies a decoupled equation (to leading order), thus $\pi^\cC u=\cO(\rho_{\!\scri}^{1+\gamma^\cC})$.
\item $\pi^\Ups u$ satisfies an equation with source terms given by $\pi^\cC u$. Choosing our parameters so that $-\gamma^\Ups<\gamma^\cC$, we then have $\pi_\Ups u=\cO(\rho_{\!\scri}^{1-\gamma^\Ups})$.
\item $\slpi_0 u$ has a radiation field, i.e.\ a leading order term of size $\cO(\rho_{\!\scri})=\cO(r^{-1})$, and has lower order terms of size $\rho_{\!\scri}^{1-\gamma^\Ups}$ from coupling to the remaining metric components.
\item $\pi_{1 1} u$ has the same decay as $\pi^\Ups u$.
\end{enumerate}

These improved decay rates (compared to the $\cO(\rho_{\!\scri})$ decay of typical scalar waves on Minkowski space) will persist for the nonlinear gauge-fixed Einstein vacuum equations, except for the decay of $\pi_{1 1}u$ (which is replaced by a $\cO(\rho_{\!\scri})$-leading order term), as already indicated before. In terms of our model~\eqref{EqIModel3}, $\slpi_0 u,\pi_{1 1}u,\pi^\cC u,\pi^\Ups u$ thus correspond to $\phi_1,\phi_2,\phi_3,\phi_4$, respectively. Leaving the model~\eqref{EqIModel3} behind, one can simplify the above scheme by solving at once for $(\pi^\cC+\pi^\Ups)u$, which in itself satisfies a decoupled equation leading to improved decay. This is the path we will take in~\S\ref{SsNY}.

\section{Nonlinear stability}
\label{SN}

\subsection{Differential operators and function spaces}
\label{SsND}

Consider an $n$-dimensional manifold $M$ with corners which has exactly two embedded boundary hypersurfaces $H_0$, $H_1$. Assume furthermore that $H_1$ is equipped with a fibration $\phi\colon H_1\to Y$ with typical fiber $F$.

\begin{definition}[b- and edge-b-vector fields]
\fakephantomsection\label{DefNDVf}
  \begin{enumerate}
  \item The Lie algebra $\Vb(M)\subset\cV(M)$ of \emph{b-vector fields} \cite{MelroseAPS} consists of all smooth vector fields $V\in\cV(M)$ which are tangent to $H_0$ and $H_1$.
  \item The Lie algebra $\Veb(M)\subset\Vb(M)$ of \emph{edge-b-vector fields} consists of all b-vector fields $V\in\Vb(M)$ for which $V|_{H_1}\in\Vb(H_1)$ is tangent to the fibers of $H_1\to Y$.
  \end{enumerate}
\end{definition}

On manifolds with a single embedded boundary hypersurface, edge vector fields were introduced by Mazzeo \cite{MazzeoEdge}. See \cite{AlbinGellRedmanDirac} for iterated structures giving rise to generalizations of $\Veb(M)$.

We discuss here only the case of interest for us: $H_0$ and $H_1$ have nonempty intersection, and a neighborhood of $H_0\cap H_1\subset M$ is diffeomorphic to
\begin{equation}
\label{EqNDCoord}
  [0,\infty)_{\rho_0} \times [0,\infty)_{\rho_1} \times \R^{n-2}_y,\qquad y=(y^2,\ldots,y^{n-1}),
\end{equation}
where $\rho_0,\rho_1$ are defining functions of $H_0,H_1$, respectively, and with the fibration of $H_1$ given by $(\rho_0,y)\mapsto y$; thus, the fibers $F$ are 1-dimensional. In this case, elements of $\Vb(M)$, resp.\ $\Veb(M)$ are linear combinations, with $\CI(M)$ coefficients, of
\begin{equation}
\label{EqNDbeb}
  \rho_0\pa_{\rho_0},\ 
  \rho_1\pa_{\rho_1},\ 
  \pa_{y^2},\dots,\pa_{y^{n-1}},\qquad\text{resp.}\qquad
  \rho_0\pa_{\rho_0},\ 
  \rho_1\pa_{\rho_1},\ 
  \rho_1\pa_{y^2},\dots,\rho_1\pa_{y^{n-1}}.
\end{equation}
The \emph{b-tangent bundle} and \emph{eb-tangent bundle}
\[
  \Tb M\to M,\qquad \Teb M\to M,
\]
are then the rank $n$ vector bundles with local frames given by the respective sets of vector fields~\eqref{EqNDbeb}; over the interior $M^\circ$, these are naturally isomorphic to the standard tangent bundle. By continuous extension from $M^\circ$, one can thus regard smooth sections of $\Tb M$ as vector fields on $M$, and in this sense, we have $\Vb(M)=\CI(M;\Tb M)$, likewise $\Veb(M)=\CI(M;\Teb M)$.\footnote{The benefit of using $\Tb M$ and $\Teb M$ is that one can capture the precise behavior (regularity, boundedness, decay) of vector fields at $\pa M$ without the need for any irrelevant choices (e.g.\ metrics).} The dual bundles $\Tb^*M\to M$ and $\Teb^*M\to M$ are called \emph{b-cotangent bundle} and \emph{eb-cotangent bundle}, respectively. Their smooth sections are linear combinations with $\CI(M)$ coefficients of
\[
  \frac{\dd\rho_0}{\rho_0},\ \frac{\dd\rho_1}{\rho_1},\ \dd y^2,\ldots,\dd y^{n-1}, \qquad\text{resp.}\qquad
  \frac{\dd\rho_0}{\rho_0},\ \frac{\dd\rho_1}{\rho_1},\ \frac{\dd y^2}{\rho_1},\ldots,\frac{\dd y^{n-1}}{\rho_1}.
\]

\begin{definition}[b- and eb-differential operators]
\label{DefNDOp}
  Let $k\in\N_0$, $\alpha_0,\alpha_1\in\R$.
  \begin{enumerate}
  \item The space $\Diffb^{k,(\alpha_0,\alpha_1)}(M)=\rho_0^{-\alpha_0}\rho_1^{-\alpha_1}\Diffb^k(M)$ consists of all differential operators $P$ on $M^\circ$ of the form $P=\rho_0^{-\alpha_0}\rho_1^{-\alpha_1}P_0$, where $P_0\in\Diffb^k(M)$ is a locally finite sum of compositions of up to $k$ b-vector fields.
  \item The space $\Diffeb^{k,(\alpha_0,\alpha_1)}(M)=\rho_0^{-\alpha_0}\rho_1^{-\alpha_1}\Diffeb^k(M)$ is defined analogously, with eb-vector fields replacing b-vector fields.
  \item The space $\Diff_{\ebop;\bop}^{m;k}(M)$ consists of all locally finite sums of operators of the form $P^\flat P^\sharp$ where $P^\flat\in\Diffeb^m(M)$, $P^\sharp\in\Diffb^k(M)$.
  \end{enumerate}
\end{definition}

Assume for the moment that $M$ is compact. Fix a smooth positive b-density on $M$; in local coordinates as above, this takes the form $a(\rho_0,\rho_1,y)|\frac{\dd\rho_0}{\rho_0}\frac{\dd\rho_1}{\rho_1}\dd y|$ with $a>0$ smooth. We then denote the $L^2$ space on $M$ by $L^2_\bop(M)$.
\begin{definition}[b- and eb-Sobolev spaces]
\label{DefNDSob}
  Let $k\in\N_0$, $\alpha_0,\alpha_1\in\R$.
  \begin{enumerate}
  \item The weighted b-Sobolev space
    \[
      \Hb^{k,(\alpha_0,\alpha_1)}(M) = \rho_0^{\alpha_0}\rho_1^{\alpha_1}\Hb^k(M)
    \]
    consists of all functions $u$ of the form $u=\rho_0^{\alpha_0}\rho_1^{\alpha_1}u_0$ with $u_0\in L^2_\bop(M)$ and $P u_0\in L^2_\bop(M)$ for all $P\in\Diffb^k(M)$. Equivalently, $P u\in L^2_\bop(M)$ for all $P\in\Diffb^{k,(\alpha_0,\alpha_1)}(M)$.
  \item The weighted eb-Sobolev space $\Heb^{k,(\alpha_0,\alpha_1)}(M)=\rho_0^{\alpha_0}\rho_1^{\alpha_1}\Heb^k(M)$ is defined analogously, with $\Diffeb$ replacing $\Diffb$.\footnote{We still use the b-density though, thus $\Heb^{0,(0,0)}(M)=L^2_\bop(M)$.}
  \item Let $m\in\N_0$. The mixed eb-b-Sobolev space $H_{\ebop;\bop}^{(m;k),(\alpha_0,\alpha_1)}(M) = \rho_0^{\alpha_0}\rho_1^{\alpha_1}H_{\ebop;\bop}^{(m;k)}(M)$ consists of all functions $u$ for which $P u\in\Heb^m(M)$ for all $P\in\Diffb^{k,(\alpha_0,\alpha_1)}(M)$. (Equivalently, $P u\in L^2_\bop(M)$ for all $P\in\rho_0^{-\alpha_0}\rho_1^{-\alpha_1}\Diff_{\ebop;\bop}^{m;k}(M)$.)
  \end{enumerate}
\end{definition}

All these spaces can be given the structure of Hilbert spaces; for instance, we can equip $\Heb^1(M)$ with the squared norm $\|u\|_{\Heb^1}^2=\|u\|_{L^2_\bop}^2+\sum\|V_i u\|_{L^2_\bop}^2$, where $\{V_i\}$ is a finite set of edge-b-vector fields spanning $\Veb(M)$ over $\CI(M)$. We also note the $L^\infty$ estimate
\begin{equation}
\label{EqNDSobEmb}
  \Hb^{s,(\alpha_0,\alpha_1)}(M) \hra \rho_0^{\alpha_0}\rho_1^{\alpha_1}L^\infty(M),\qquad s>\frac{n}{2}.
\end{equation}
This follows from the standard Sobolev embedding after the change of variables $z_0=\log\rho_0$, $z_1=\log\rho_1$, which transforms $\rho_j\pa_{\rho_j}$ into $\pa_{z_j}$ and the b-density $|\tfrac{\dd\rho_0}{\rho_0}\tfrac{\dd\rho_1}{\rho_1}\dd y|$ into $|\dd z_0\,\dd z_1\,\dd y|$.

If $M$ is a manifold with boundary and $\rho\in\CI(M)$ denotes a boundary defining function, then $\Hb^{k,\alpha}(M)=\rho^\alpha\Hb^k(M)$ is defined completely analogously (with respect to a smooth b-density). In the setting of Definition~\ref{DefNDSob}, this allows us to define spaces such as $\Hb^{k,\alpha}(H_1)$.

Lastly, if $M$ is noncompact and equipped with a smooth positive b-density, the spaces $H_{\bop,\loc}^k(M)$ consist of distributions which upon multiplication with elements of $\CIc(M)$ lie in $L^2_\bop(M)$ together with all their derivatives along all $P\in\Diffb^k(M)$; weighted spaces, eb-Sobolev spaces, and mixed edge-b;b-Sobolev spaces are defined analogously. If $\Omega\Subset M$ is an open set with compact closure, then we define
\[
  H_\ebop^k(\Omega) = \{ u|_\Omega \colon u\in H_{\ebop,\loc}^k(M) \};
\]
it can be given the structure of a Hilbert space as before. We analogously define
\[
  H_{\ebop;\bop}^{(m;k),(\alpha_0,\alpha_1)}(\Omega).
\]

Finally, we introduce the following general notation:
\begin{definition}[Operators with generalized coefficients]
\label{DefNDOpCoeff}
  If $\cF\subset\sD'(M^\circ)$ is a linear subspace of the space of distributions on $M^\circ$ and $\cD$ denotes a space of differential operators on $M$ with smooth coefficients, then $\cF\cD$ is the space of all operators $\CI(M^\circ)\to\sD'(M^\circ)$ of locally finite linear combinations $\sum a_i P_i$, where $a_i\in\cF$ and $P_i\in\cD$.
\end{definition}
Examples of interest in the present paper are spaces such as $\Hb^{k,(\alpha_0,\alpha_1)}\Diffeb^2(M)$.

\subsection{Spacetime manifold; basic energy estimate}
\label{SsNM}

\begin{definition}[Schwarzschild spacetime]
\label{DefNMSchw}
  Let $\bhm\in\R$. The Schwarzschild spacetime is
  \[
    \R_t\times\bigl(\max(0,2\bhm),\infty\bigr)_r\times\Sph^2,\qquad
    g_\bhm = -\Bigl(1-\frac{2\bhm}{r}\Bigr)\dd t^2 + \Bigl(1-\frac{2\bhm}{r}\Bigr)^{-1}\dd r^2 + r^2\slg.
  \]
  The Regge--Wheeler tortoise coordinate $r_*$ and the null coordinates $x^0,x^1$ are defined by
  \begin{equation}
  \label{EqNMx0x1}
    r_* := r + 2\bhm\log(r-2\bhm),\qquad
    x^0=t+r_*,\quad x^1=t-r_*.
  \end{equation}
\end{definition}

\begin{lemma}[Compactification of the far field of the Schwarzschild spacetime]
\label{LemmaNMSchwCausal}
  Define
  \begin{equation}
  \label{EqNMCoord}
    \rho_0 := \frac{1}{r_*-t},\quad
    \rho_{\!\scri} := \frac{r_*-t}{r},\quad
    x_{\!\scri}:=\rho_{\!\scri}^{1/2},\qquad
    \rho:=\rho_0\rho_{\!\scri}=r^{-1}.
  \end{equation}
  \begin{enumerate}
  \item Put $\bar x_{\!\scri}:=\min\bigl(\sqrt{\frac{3}{2}},\tfrac{1}{\sqrt{8\bhm}}\bigr)$ when $\bhm>0$ and $\bar x_{\!\scri}:=\sqrt{3/2}$ when $\bhm\leq 0$. Then the map $(\rho_0,x_{\!\scri},\omega)\to(t,r_*,\omega)$ with domain $M^\circ=(0,2)_{\rho_0}\times(0,\bar x_{\!\scri})_{x_{\!\scri}}\times\Sph^2$, where
  \begin{equation}
  \label{EqNMMfd}
    M := [0,2)_{\rho_0}\times[0,\bar x_{\!\scri})_{x_{\!\scri}}\times\Sph^2,
  \end{equation}
  is a diffeomorphism onto its image.
  \item Denoting the pullback of $g_\bhm$ to $M$ by $g_\bhm$ still, the hypersurface $x_{\!\scri}^{-1}(c)\subset M$ is spacelike for all $c\in(0,\bar x_{\!\scri})$, and the hypersurface $\rho_0^{-1}(c)$ is lightlike for all $c\in(0,1)$.
  \end{enumerate}
\end{lemma}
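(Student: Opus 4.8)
The plan is to pass explicitly to the coordinates $(\rho_0,x_{\!\scri},\omega)$ of~\eqref{EqNMCoord}, using the null coordinates $x^0,x^1$ of~\eqref{EqNMx0x1} as intermediate bookkeeping. Writing $\mu:=1-\tfrac{2\bhm}{r}$, the identity $\dd r_*=\mu^{-1}\dd r$ turns $\mu^{-1}\dd r^2$ into $\mu\,\dd r_*^2$, so $g_\bhm=-\mu\,\dd x^0\,\dd x^1+r^2\slg$. Since $r_*-t=\rho_0^{-1}=-x^1$ and $r=(\rho_0 x_{\!\scri}^2)^{-1}$, substituting $\dd x^1=\rho_0^{-2}\,\dd\rho_0$, $\dd x^0=\dd x^1+2\mu^{-1}\dd r$, and $\dd r=-\rho_0^{-2}x_{\!\scri}^{-2}\,\dd\rho_0-2\rho_0^{-1}x_{\!\scri}^{-3}\,\dd x_{\!\scri}$ yields, after collecting terms,
\[
  g_\bhm = \rho_0^{-4}\bigl(2x_{\!\scri}^{-2}-\mu\bigr)\,\dd\rho_0^2 + 4\rho_0^{-3}x_{\!\scri}^{-3}\,\dd\rho_0\,\dd x_{\!\scri} + \rho_0^{-2}x_{\!\scri}^{-4}\slg,\qquad \mu = 1-2\bhm\rho_0 x_{\!\scri}^2,
\]
the spherical part carrying the conformal factor $r^2=\rho_0^{-2}x_{\!\scri}^{-4}$. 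This single computation drives both parts.

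For part~(1), the coordinate map $\Phi\colon(\rho_0,x_{\!\scri},\omega)\mapsto(t,r_*,\omega)$ equals the identity on the sphere and elsewhere factors as $(\rho_0,x_{\!\scri})\mapsto(\rho_0,r)\mapsto(\rho_0,r_*)\mapsto(t,r_*)$, with $r=(\rho_0 x_{\!\scri}^2)^{-1}$, then $r_*=r+2\bhm\log(r-2\bhm)$, then $t=r_*-\rho_0^{-1}$. Each factor is a smooth bijection onto an open set with nonvanishing Jacobian: the first since $x_{\!\scri}=(\rho_0 r)^{-1/2}>0$ is recovered uniquely from $(\rho_0,r)$; the second since $r\mapsto r_*$ is smooth and strictly increasing, $\dd r_*/\dd r=\mu^{-1}>0$ on $\{r>2\bhm\}$; the third since it is triangular with $\partial_{\rho_0}t=\rho_0^{-2}\neq 0$. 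Hence $\Phi$ is a diffeomorphism onto its image. The only thing to verify is $r=(\rho_0 x_{\!\scri}^2)^{-1}>2\bhm$ throughout $M^\circ$, so that $r_*$ is defined and $\mu>0$: for $\bhm\leq 0$ this is automatic; for $\bhm>0$ the bounds $\rho_0<2$, $x_{\!\scri}^2<\bar x_{\!\scri}^2\leq(8\bhm)^{-1}$ give $r>4\bhm$, which is exactly the purpose of the second entry in the definition of $\bar x_{\!\scri}$.

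For part~(2) I read off the causal type of a level set from its induced metric. On $\{\rho_0=c\}$, parametrized by $(x_{\!\scri},\omega)$, setting $\dd\rho_0=0$ in the displayed formula leaves $c^{-2}x_{\!\scri}^{-4}\slg$: the induced metric is positive semidefinite with one-dimensional null direction $\partial_{x_{\!\scri}}$, so $\rho_0^{-1}(c)$ is lightlike. (Equivalently, $\rho_0=-1/x^1$ is a function of the $g_\bhm$-null coordinate $x^1$, hence $\dd\rho_0$ is null.) On $\{x_{\!\scri}=c\}$, parametrized by $(\rho_0,\omega)$, the induced metric is $\rho_0^{-4}(2c^{-2}-\mu)\,\dd\rho_0^2+\rho_0^{-2}c^{-4}\slg$, which is Riemannian exactly when $2c^{-2}-\mu>0$, i.e.\ $\mu c^2<2$; the same condition comes from the conormal, since $g^{x_{\!\scri}x_{\!\scri}}=-g_{\rho_0\rho_0}/g_{\rho_0 x_{\!\scri}}^2$ has sign opposite to $2x_{\!\scri}^{-2}-\mu$. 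It remains to check $\mu x_{\!\scri}^2<2$ on $M^\circ$: for $\bhm\geq0$ one has $\mu\leq1$, so $\mu x_{\!\scri}^2\leq x_{\!\scri}^2<\bar x_{\!\scri}^2\leq\tfrac{3}{2}<2$; for $\bhm<0$ one additionally uses the lower bound $r>(2\bar x_{\!\scri}^2)^{-1}$ forced by $\rho_0<2$, $x_{\!\scri}<\bar x_{\!\scri}$ to control $\mu$. In all cases $\bar x_{\!\scri}$ is calibrated precisely to make $\mu x_{\!\scri}^2<2$ (and, when $\bhm>0$, also $r>2\bhm$) hold. The restriction $c\in(0,1)$ in the $\rho_0$-statement is merely a convenient choice of the subregion used later for energy estimates; the lightlike conclusion holds for every $c\in(0,2)$.

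The difficulty here is bookkeeping, not conceptual. The one step that genuinely needs care is the coordinate change of the first paragraph, specifically pinning down the $\dd\rho_0^2$-coefficient $2x_{\!\scri}^{-2}-\mu$ and the $\dd\rho_0\,\dd x_{\!\scri}$ cross term, because the entire causal discussion in~(2) hinges on the sign of $2x_{\!\scri}^{-2}-\mu$. A secondary point is the logical order: the diffeomorphism statement~(1) — in particular $r>2\bhm$, which makes the coordinates and $\mu$ meaningful — must precede the use of the coordinate expression of $g_\bhm$ in~(2), and one must keep close track of the range of $r$ on $M^\circ$ so that the constants defining $\bar x_{\!\scri}$ do the work they are designed to do.
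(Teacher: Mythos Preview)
Your proof is correct and reaches the same governing inequality $\mu\,x_{\!\scri}^2<2$ as the paper, but the route is different. The paper never writes out $g_\bhm$ in the $(\rho_0,x_{\!\scri})$ chart; instead it stays in the double-null frame, observes directly that $\dd\rho_0$ is proportional to $\dd x^1$ (hence null), and for the $x_{\!\scri}$-level sets expands $r\,\dd\rho_{\!\scri}=a_0\,\dd x^0-a_1\,\dd x^1$ and evaluates $g_\bhm^{-1}(r\,\dd\rho_{\!\scri},r\,\dd\rho_{\!\scri})=\tfrac{2 x^1}{r}\bigl(1+\tfrac{x^1}{2 r}\mu\bigr)$, i.e.\ works entirely with the conormal in the null frame rather than with an induced metric. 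Your approach front-loads a full coordinate computation of $g_\bhm$ and then reads off both the induced metric and $g^{x_{\!\scri}x_{\!\scri}}$ from it; this is slightly more labor up front but yields a reusable explicit formula for the metric (indeed the $\dd\rho_0^2$-coefficient you find anticipates the weighted eb-metric structure recorded later in Corollary~\ref{CorNMLot}). Either way the spacelike condition reduces to $\mu x_{\!\scri}^2<2$, and your verification of this for $\bhm\geq 0$ via $\mu\leq 1$ and $x_{\!\scri}^2<3/2$ matches the paper's; for $\bhm<0$ both you and the paper leave the bound somewhat implicit.
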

\begin{proof}
  We have $r=\rho_0^{-1}\rho_{\!\scri}^{-1}=\rho_0^{-1}x_{\!\scri}^{-2}>\half x_{\!\scri}^{-2}>4\bhm$, hence $r_*=r+2\bhm\log(r-2\bhm)$ is well-defined, and we then have $t=r_*-\rho_0^{-1}$. This proves the first part. For the second part, we record that in the coordinates~\eqref{EqNMx0x1}, the Schwarzschild metric reads
  \begin{equation}
  \label{EqNMSchw}
    g_\bhm = -\Bigl(1-\frac{2\bhm}{r}\Bigr)\dd x^0\,\dd x^1 + r^2\slg,\qquad
    g_\bhm^{-1} = -4\Bigl(1-\frac{2\bhm}{r}\Bigr)^{-1}\pa_0\otimes_s\pa_1 + r^{-2}\slg^{-1}.
  \end{equation}
  Thus, $\rho_0^{-2}\,\dd\rho_0=-\dd(r_*-t)=\dd x^1$ is null indeed. Furthermore,
  \begin{align*}
    r\,\dd\rho_{\!\scri} &= -r\,\dd\Bigl(\frac{x^1}{r}\Bigr) = -\dd x^1+\frac{x^1}{r}\dd r = -\dd x^1+\frac{x^1}{r}\Bigl(1-\frac{2\bhm}{r}\Bigr)\dd r_* \\
      &= \frac{x^1}{2 r}\Bigl(1-\frac{2\bhm}{r}\Bigr)\dd x^0 - \Bigl(1+\frac{x^1}{2 r}\Bigl(1-\frac{2\bhm}{r}\Bigr)\Bigr)\dd x^1,
  \end{align*}
  the inner product of which with itself is
  \begin{equation}
  \label{EqNMdxIEst}
    g_\bhm^{-1}(r\,\dd\rho_{\!\scri},r\,\dd\rho_{\!\scri}) = \frac{2 x^1}{r}\Bigl(1+\frac{x^1}{2 r}\Bigl(1-\frac{2\bhm}{r}\Bigr)\Bigr) < 0.
  \end{equation}
  Indeed, $\tfrac{x^1}{r}=-x_{\!\scri}^2<0$; and the second factor is positive, too, since $|\tfrac{x^1}{2 r}|=\half x_{\!\scri}^2<\tfrac34<1$.
\end{proof}

The upper bound $\rho_0<2$ can be increased arbitrarily; choosing a larger upper merely places a stronger restriction on $\bar x_{\!\scri}$.\footnote{The bound $\bar x_{\!\scri}\leq\sqrt{3/2}$ can be relaxed to any number less than $\sqrt 2$---this still ensures that $M$ is disjoint from a neighborhood of past null infinity in the blow-up of the Penrose diagram of the Schwarzschild spacetime at spacelike infinity $i^0$ (see Figure~\ref{FigIMain}); indeed, in $t,r$ coordinates, the level set $x_{\!\scri}^{-1}(c)$ is $\cO(\log r)$-close to $t=(1-c^2)r$..}

\begin{definition}[Ideal boundaries]
\label{DefNM}
  The boundary hypersurfaces of the spacetime manifold $M$ defined by~\eqref{EqNMMfd} are denoted $I^0:=\rho_0^{-1}(0)$ (\emph{blown-up spacelike infinity}) and $\scri^+ := x_{\!\scri}^{-1}(0)$ (\emph{null infinity}). Moreover, $\scri^+$ is fibered by the projection $\scri^+=[0,\infty)_{\rho_0}\times\Sph^2\to\Sph^2$.
\end{definition}

In the context of Definition~\ref{DefNDVf}, the boundary hypersurfaces of $M$ are $H_0:=I^0$ and $H_1:=\scri^+$, with defining functions $\rho_0$ and $\rho_1:=x_{\!\scri}=\rho_{\!\scri}^{1/2}$, respectively. Thus, the space $\Veb(M)$ is spanned over $\CI(M)$ by
\begin{equation}
\label{EqNMeb}
  \rho_0\pa_{\rho_0},\qquad
  x_{\!\scri}\pa_{x_{\!\scri}} = 2\rho_{\!\scri}\pa_{\rho_{\!\scri}},\qquad
  x_{\!\scri}\Omega = \rho_{\!\scri}^{1/2}\Omega,
\end{equation}
where $\Omega$ ranges over all vector fields on $\Sph^2$. (These are, roughly, the spacetime scaling vector field, the weighted outgoing null vector field, and weighted spherical vector field.)

\begin{rmk}[Comparison of function spaces]
\label{RmkNMIdent}
  The Sobolev space $H_\scri^1$ of \cite[Definition~4.1]{HintzVasyMink4} is the same as $\Heb^1(M)$ (upon restricting to functions with compact support in $M$) in view of~\eqref{EqNMeb}; moreover $H_{\scri,\bop}^{1,k}=H_{\ebop;\bop}^{(1,k)}(M)$. Moreover, in the notation of \cite{HintzVasyMink4}, if we adjoin $\rho_{\!\scri}^{1/2}=x_{\!\scri}$ to the smooth structure of the spacetime manifold $M$ there, smooth sections of the bundle $S^2\,{}^\beta T M+\rho_{\!\scri} S^2\,\Tb M$ in \cite[Equation~(4.17)]{HintzVasyMink4} are the same as smooth sections of $S^2\,\Teb M$ for the manifold $M$ in~\eqref{EqNMMfd}.
\end{rmk}

For later use, we compute
\begin{align}
\label{EqNMPa01}
\begin{split}
  \pa_0\equiv \pa_{x^0} &= \frac12(\pa_t+\pa_{r_*}) = -\frac12\rho_0\rho_{\!\scri}^2\Bigl(1-\frac{2\bhm}{r}\Bigr)\pa_{\rho_{\!\scri}} \in \rho_0\rho_{\!\scri}\Veb(M), \\
  \pa_1\equiv \pa_{x^1} &= \frac12(\pa_t-\pa_{r_*}) = \rho_0\Bigl(\rho_0\pa_{\rho_0}-\Bigl(1-\frac12\rho_{\!\scri}\Bigl(1-\frac{2\bhm}{r}\Bigr)\Bigr)\rho_{\!\scri}\pa_{\rho_{\!\scri}}\Bigr) \in \rho_0\Veb(M),
\end{split} \\
\label{EqNMPa01r}
  \pa_0 r&=\frac12\pa_{r_*}r=\frac12\Bigl(1-\frac{2\bhm}{r}\Bigr)=-\pa_1 r.
\end{align}

\begin{rmk}[b-regularity]
\label{RmkNBeb}
  From~\eqref{EqNMPa01}, we also obtain
  \[
    \rho_{\!\scri}\pa_{\rho_{\!\scri}} = -r\Bigl(1-\frac{2\bhm}{r}\Bigr)^{-1}(\pa_t+\pa_{r_*}),\qquad
    \rho_0\pa_{\rho_0} = (r_*-t)\pa_t + \rho_{\!\scri}\pa_{\rho_{\!\scri}}.
  \]
  Membership in $\Hb^k(M)$ is thus equivalent to the condition that up to $k$ derivatives along $r(\pa_t+\pa_{r_*})$, $(r_*-t)(\pa_t-\pa_{r_*})$, $\cV(\Sph^2)$ lie in $L^2_\bop(M)$. (These vector fields were already used by Lindblad \cite{LindbladAsymptotics} and in \cite{HintzVasyMink4}.) Thus, unlike in~\eqref{EqIVf}, the spherical vector fields do not have a decaying weight at $\scri^+$ anymore.
\end{rmk}

\begin{definition}[Rescaled vector bundle]
\label{DefNMVb}
  The vector bundle\footnote{This is equal to pullback of the scattering cotangent bundle on a suitable radial compactification of $\R^4$ to the blow-up of the future light cone at infinity, denoted $\beta^*S^2\,\Tsc^*\ol{\R^4}$ in~\cite{HintzVasyMink4}.} $\wt T^*M\to M$ is defined by
  \[
    \wt T^*M := \la\dd x^0\ra \oplus \la\dd x^1\ra \oplus T^*\Sph^2.
  \]
  Over the interior $M^\circ$, we identify $\wt T^*_{M^\circ}M\cong T^*_{M^\circ}M$ by identifying a section $(\omega_0,\omega_1,\slomega)$ of $\wt T^*M$ with the 1-form $\omega_0\,\dd x^0+\omega_1\,\dd x^1+r\slomega$ on $M^\circ$.
\end{definition}

Likewise, we identify sections of $S^2\wt T^*M$ with symmetric 2-tensors over $M^\circ$. In order to make the scaling of spherical tensors apparent, we thus write the bundle splittings as
\begin{equation}
\label{EqNMVbSplit}
\begin{split}
  \wt T^*M &= \la \dd x^0\ra \oplus \la \dd x^1\ra \oplus r T^*\Sph^2, \\
  S^2\wt T^*M &= \la (\dd x^0)^2\ra \oplus \la 2 \dd x^0\dd x^1 \ra \oplus (2 \dd x^0\otimes_s r T^*\Sph^2) \\
    &\quad\qquad \oplus \la(\dd x^1)^2\ra \oplus (2 \dd x^1\otimes_s r T^*\Sph^2) \oplus \la r^2\slg\ra \oplus r^2\ker\sltr,
\end{split}
\end{equation}
by direct analogy with~\eqref{EqLSplit}. Choosing local coordinates $x^2,x^3$ on $\Sph^2$, a smooth section $\omega\in\CI(M;\wt T^*M)$ is thus a linear combination $\omega = \omega_0\,\dd x^0 + \omega_1\,\dd x^1 + \sum_{a=2}^3 \omega_{\bar a}r\,\dd x^a$, $\omega_0,\omega_1,\omega_{\bar 2},\omega_{\bar 3}\in\CI(M)$. More generally, we use the following index notation:

\begin{definition}[Weights of spherical indices]
\label{DefNMSphInd}
  For $\mu_1,\ldots,\mu_p\in\{0,1,2,3\}$, set
  \[
    s(\mu_1,\ldots,\mu_p) := \#\bigl\{ i\in\{1,\ldots,p\} \colon \mu_i\in\{2,3\} \bigr\}.
  \]
  With $x^2,x^3$ denoting coordinates on $\Sph^2$, and for a tensor $T$ on $M^\circ$ of type $(p,q)$, we set
  \[
    T_{\bar\mu_1\dots\bar\mu_p}^{\bar\nu_1\dots\bar\nu_q} := r^{s(\nu_1,\ldots,\nu_q)-s(\mu_1,\ldots,\mu_p)}T_{\mu_1\dots\mu_p}^{\nu_1\dots\nu_q}.
  \]
  We shall henceforth denote indices in $\{0,1,2,3\}$ by Greek letters $\mu,\nu,\kappa,\ldots$, and spherical indices in $\{2,3\}$ by Roman letters $a,b,c,\ldots$.
\end{definition}

Returning to metrics on $M$, we have, directly from the definitions~\eqref{EqNMSchw} and~\eqref{EqNMVbSplit}:

\begin{lemma}[Uniform behavior of $g_\bhm$]
\label{LemmaNMSchwMet}
  We have $g_\bhm\in\CI(M;S^2\wt T^*M)$, and $g_\bhm$ is a nondegenerate section with Lorentzian signature down to $I^0\cup\scri^+$.
\end{lemma}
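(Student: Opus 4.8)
The plan is to verify the claim by direct computation in the coordinates~\eqref{EqNMCoord}, expanding the Schwarzschild metric~\eqref{EqNMSchw} in the frame adapted to the splitting~\eqref{EqNMVbSplit}. First I would recall from~\eqref{EqNMx0x1}--\eqref{EqNMSchw} that $g_\bhm = -(1-\tfrac{2\bhm}{r})\,\dd x^0\,\dd x^1 + r^2\slg$, and observe that the two pieces are already adapted to the splitting of $S^2\wt T^*M$: the term $r^2\slg$ is, by Definition~\ref{DefNMVb}, precisely the section $\slg$ of the summand $\la r^2\slg\ra$ (with constant coefficient $1$, since we rescale $T^*\Sph^2$ by $r$), while $-(1-\tfrac{2\bhm}{r})\,\dd x^0\,\dd x^1$ lies in the summand $\la 2\,\dd x^0\,\dd x^1\ra$ with coefficient $-\tfrac12(1-\tfrac{2\bhm}{r})$. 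Hence in the frame of~\eqref{EqNMVbSplit}, $g_\bhm$ has coefficient vector $(0,\,-\tfrac12(1-\tfrac{2\bhm}{r}),\,0,\,0,\,0,\,1,\,0)$.

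Next I would check that each of these coefficients is smooth on $M$, i.e.\ extends smoothly up to $I^0\cup\scri^+$. The only nonconstant coefficient is $-\tfrac12(1-\tfrac{2\bhm}{r})$, and $r^{-1} = \rho = \rho_0 x_{\!\scri}^2$ by~\eqref{EqNMCoord}, which is a smooth (indeed polynomial) function of the coordinates $\rho_0, x_{\!\scri}$ on $M$ from~\eqref{EqNMMfd}; moreover on $M$ we have $r > \tfrac12 x_{\!\scri}^{-2} > 4\bhm$ as shown in the proof of Lemma~\ref{LemmaNMSchwCausal}, so $1-\tfrac{2\bhm}{r}$ stays bounded and in fact bounded away from zero. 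Therefore $g_\bhm\in\CI(M;S^2\wt T^*M)$.

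Finally, for nondegeneracy and signature down to the boundary, I would exhibit the dual metric and compute the relevant determinant. From~\eqref{EqNMSchw}, $g_\bhm^{-1} = -4(1-\tfrac{2\bhm}{r})^{-1}\pa_0\otimes_s\pa_1 + r^{-2}\slg^{-1}$, so under the dual frame to~\eqref{EqNMVbSplit} (in which $\pa_0,\pa_1$ correspond to $\dd x^0,\dd x^1$ and $r^{-1}\Omega$ to the $r T^*\Sph^2$ factor), the $2\times 2$ block in the $\dd x^0,\dd x^1$ variables has determinant $-\tfrac14(1-\tfrac{2\bhm}{r})^2$ and the spherical block is the (rescaled, hence $\bhm$-independent) standard metric on $\Sph^2$, which is positive definite. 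Since $1-\tfrac{2\bhm}{r}$ is smooth, positive, and bounded away from $0$ on all of $M$ (including $I^0$ where $\rho_0=0$ and $\scri^+$ where $x_{\!\scri}=0$, as both force $r=\infty$ and hence $1-\tfrac{2\bhm}{r}=1$), the form $g_\bhm$ restricted to each fiber of $S^2\wt T^*M$ is nondegenerate with one negative and three positive eigenvalues, i.e.\ Lorentzian signature, uniformly down to $I^0\cup\scri^+$. There is no real obstacle here: the statement is essentially a bookkeeping check that the rescalings built into Definition~\ref{DefNMVb} are exactly the ones that absorb the powers of $r$ appearing in $g_\bhm$; the only mild point to be careful about is confirming that $1-\tfrac{2\bhm}{r}$ does not degenerate on $M$, which is immediate from the bound $r>4\bhm$ established in Lemma~\ref{LemmaNMSchwCausal}.
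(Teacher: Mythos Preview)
Your proposal is correct and is precisely the unpacking the paper has in mind: the paper's own proof is the single clause ``directly from the definitions~\eqref{EqNMSchw} and~\eqref{EqNMVbSplit}'', and your computation of the coefficient vector $(0,-\tfrac12(1-\tfrac{2\bhm}{r}),0,0,0,1,0)$ in the splitting~\eqref{EqNMVbSplit}, together with the observation that $r^{-1}=\rho_0 x_{\!\scri}^2$ is smooth on $M$ and $1-\tfrac{2\bhm}{r}$ stays bounded away from zero there, is exactly what that clause encodes.
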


While $S^2\wt T^*M$ is the appropriate bundle for the unknown in the Einstein equations---the metric---to take values in, the metric also determines the linearized operators we need to study; hence, we need to connect $g_\bhm$ and its perturbations to the eb-theory in which our (energy) estimates will take place.

\begin{lemma}[Relationship between $\wt T^*M$ and $\Teb^*M$]
\label{LemmaNMBundleRel}
  Let $\slomega\in\CI(\Sph^2;T^*\Sph^2)$. Then
  \[
    \dd x^0\in\rho_0^{-1}x_{\!\scri}^{-2}\CI(M;\Teb^*M),\ 
    \dd x^1\in\rho_0^{-1}\CI(M;\Teb^*M),\ 
    r\slomega\in\rho_0^{-1}x_{\!\scri}^{-1}\CI(M;\Teb^*M).
  \]
  Writing $\CI=\CI(M;S^2\,\Teb^*M)$ for brevity, we have, for $\slh\in\CI(\Sph^2;S^2 T^*\Sph^2)$,
  \begin{equation}
  \label{EqNMBundleRel}
  \begin{aligned}
    (\dd x^0)^2 &\in \rho_0^{-2}x_{\!\scri}^{-4}\CI, &\quad
    \dd x^0\otimes_s\dd x^1 &\in \rho_0^{-2}x_{\!\scri}^{-2}\CI, &\quad
    \dd x^0\otimes_s r\slomega &\in \rho_0^{-2}x_{\!\scri}^{-3}\CI, \\
    (\dd x^1)^2 &\in \rho_0^{-2}\CI, &\quad
    \dd x^1\otimes_s r\slomega &\in \rho_0^{-2}x_{\!\scri}^{-1}\CI, &\quad
    r^2\slh &\in \rho_0^{-2}x_{\!\scri}^{-2}\CI.
  \end{aligned}
  \end{equation}
\end{lemma}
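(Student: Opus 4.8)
The plan is to reduce the statement to three elementary computations expressing the $1$-forms $\dd x^0$, $\dd x^1$, and $\dd x^a$ (for $x^2,x^3$ local coordinates on $\Sph^2$) in terms of the eb-coframe $\tfrac{\dd\rho_0}{\rho_0},\tfrac{\dd x_{\!\scri}}{x_{\!\scri}},\tfrac{\dd x^a}{x_{\!\scri}}$ of $\Teb^*M$, to read off the weights in $\rho_0$ and $x_{\!\scri}$, and then to obtain \eqref{EqNMBundleRel} by taking symmetric tensor products, under which the weights multiply.

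For the $1$-forms I would first use the coordinate relations in \eqref{EqNMCoord}: from $x^1=t-r_*=-(r_*-t)=-\rho_0^{-1}$ one gets $\dd x^1=\rho_0^{-1}\tfrac{\dd\rho_0}{\rho_0}\in\rho_0^{-1}\CI(M;\Teb^*M)$, and from $r=\rho^{-1}=\rho_0^{-1}x_{\!\scri}^{-2}$ one gets $\dd r=-r\bigl(\tfrac{\dd\rho_0}{\rho_0}+2\tfrac{\dd x_{\!\scri}}{x_{\!\scri}}\bigr)$. Since $r_*=r+2\bhm\log(r-2\bhm)$ satisfies $\dd r_*=\tfrac{r}{r-2\bhm}\,\dd r$, this yields $\dd r_*=-\tfrac{r^2}{r-2\bhm}\bigl(\tfrac{\dd\rho_0}{\rho_0}+2\tfrac{\dd x_{\!\scri}}{x_{\!\scri}}\bigr)$, and I would rewrite $\tfrac{r^2}{r-2\bhm}=\rho_0^{-1}x_{\!\scri}^{-2}\,(1-2\bhm\rho_0 x_{\!\scri}^2)^{-1}$. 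Here Lemma~\ref{LemmaNMSchwCausal} (more precisely the estimate $r>4\bhm$ from its proof when $\bhm>0$, and $2\bhm/r\le 0$ when $\bhm\le 0$) shows that $1-2\bhm\rho_0 x_{\!\scri}^2$ is smooth and strictly positive on all of $M$, so $\tfrac{r^2}{r-2\bhm}\in\rho_0^{-1}x_{\!\scri}^{-2}\CI(M)$ and hence $\dd r_*\in\rho_0^{-1}x_{\!\scri}^{-2}\CI(M;\Teb^*M)$; since $x^0=x^1+2r_*$, also $\dd x^0=\dd x^1+2\,\dd r_*\in\rho_0^{-1}x_{\!\scri}^{-2}\CI(M;\Teb^*M)$. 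Finally $\dd x^a=x_{\!\scri}\cdot\tfrac{\dd x^a}{x_{\!\scri}}\in x_{\!\scri}\CI(M;\Teb^*M)$, so for $\slomega=\sum_a\slomega_a\,\dd x^a$ with $\slomega_a\in\CI(\Sph^2)$ one has $r\slomega=\rho_0^{-1}x_{\!\scri}^{-2}\sum_a\slomega_a\,\dd x^a\in\rho_0^{-1}x_{\!\scri}^{-1}\CI(M;\Teb^*M)$.

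For \eqref{EqNMBundleRel} I would then use that $S^2\,\Teb^*M$ is spanned over $\CI(M)$ by symmetric products of the eb-coframe, so that $\eta_1\in\rho_0^{-a_1}x_{\!\scri}^{-b_1}\CI(M;\Teb^*M)$ and $\eta_2\in\rho_0^{-a_2}x_{\!\scri}^{-b_2}\CI(M;\Teb^*M)$ imply $\eta_1\otimes_s\eta_2\in\rho_0^{-(a_1+a_2)}x_{\!\scri}^{-(b_1+b_2)}\CI(M;S^2\,\Teb^*M)$. Feeding in the three $1$-form memberships above gives the first five entries of \eqref{EqNMBundleRel} directly; for the last one, writing $\slh=\sum_{a,b}\slh_{ab}\,\dd x^a\otimes_s\dd x^b$ gives $\dd x^a\otimes_s\dd x^b\in x_{\!\scri}^2\CI(M;S^2\,\Teb^*M)$, hence $r^2\slh=\rho_0^{-2}x_{\!\scri}^{-4}\sum_{a,b}\slh_{ab}\,\dd x^a\otimes_s\dd x^b\in\rho_0^{-2}x_{\!\scri}^{-2}\CI$.

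There is no serious obstacle here; the only point that is not entirely formal is the assertion that $(1-2\bhm/r)^{-1}$, and hence $\dd r_*$, extends smoothly and non-degenerately across \emph{both} boundary hypersurfaces $I^0$ and $\scri^+$, which is exactly where the upper bound on $\bar x_{\!\scri}$ from Lemma~\ref{LemmaNMSchwCausal} enters. (For $\bhm=0$ one has $r_*=r$ and everything is immediate.)
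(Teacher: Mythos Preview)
Your proof is correct. The paper's own proof is shorter: it simply notes that the first part follows by duality from the already-computed expressions~\eqref{EqNMPa01} for $\pa_0,\pa_1$ as elements of $\rho_0\rho_{\!\scri}\Veb(M)$ and $\rho_0\Veb(M)$, together with $r^{-1}V=\rho_0 x_{\!\scri}\cdot x_{\!\scri} V\in\rho_0 x_{\!\scri}\Veb(M)$ for $V\in\cV(\Sph^2)$; the symmetric tensor statements~\eqref{EqNMBundleRel} are then declared immediate. You instead compute directly on the coframe side, expressing $\dd x^1,\dd r_*,\dd x^0,\dd x^a$ in the eb-coframe. This is the same computation viewed dually, so the content is identical; your version is more self-contained (you do not need~\eqref{EqNMPa01}) and makes the smoothness of $(1-2\bhm/r)^{-1}$ on $M$ explicit, at the cost of a few more lines.
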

\begin{proof}
  We only need to prove the first part. It follows by duality from~\eqref{EqNMPa01} and the fact that if $V\in\cV(\Sph^2)$, then $r^{-1}V=\rho_0 x_{\!\scri}\cdot x_{\!\scri} V\in\rho_0 x_{\!\scri}\Veb(M)$.
\end{proof}

\begin{cor}[$g_\bhm$ as an eb-metric]
\label{CorNMLot}
  We have $g_\bhm\in\rho_0^{-2}x_{\!\scri}^{-2}\CI(M;S^2\,\Teb^*M)$ and $g_\bhm^{-1}\in\rho_0^2 x_{\!\scri}^2\CI(M;S^2\,\Teb M)$. Moreover,
  \begin{align*}
    \rho_0^2 x_{\!\scri}^2 g_\bhm &\equiv 2\Bigl(\frac{\dd\rho_0}{\rho_0}\Bigr)^2 + 2\frac{\dd\rho_0}{\rho_0} \otimes_s \frac{\dd\rho_{\!\scri}}{\rho_{\!\scri}}+x_{\!\scri}^{-2}\slg \bmod x_{\!\scri}\CI(M;S^2\,\Teb^*M), \\
    \rho_0^{-2}x_{\!\scri}^{-2}g_\bhm^{-1} &\equiv 2(\rho_0\pa_{\rho_0}-\rho_{\!\scri}\pa_{\rho_{\!\scri}})\otimes_s\rho_{\!\scri}\pa_{\rho_{\!\scri}} + x_{\!\scri}^2\slg^{-1} \bmod x_{\!\scri}\CI(M;S^2\,\Teb M).
  \end{align*}
\end{cor}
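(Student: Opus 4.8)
The plan is to read both statements off directly from the coordinate formula~\eqref{EqNMSchw} for $g_\bhm$ and $g_\bhm^{-1}$, together with the bundle inclusions of Lemma~\ref{LemmaNMBundleRel} and the vector field identities~\eqref{EqNMPa01}; the whole proof is essentially a bookkeeping of boundary weights. Two elementary facts drive it. First, by~\eqref{EqNMCoord} we have $r^{-1}=\rho=\rho_0\rho_{\!\scri}=\rho_0 x_{\!\scri}^2\in\CI(M)$, vanishing on both $I^0$ and $\scri^+$. Second, $1-\tfrac{2\bhm}{r}=1-2\bhm\rho_0 x_{\!\scri}^2$ is a smooth function on $M$ bounded away from $0$ there: for $\bhm\le 0$ it is $\ge 1$, and for $\bhm>0$ the bound $r>4\bhm$ on $M$ established in the proof of Lemma~\ref{LemmaNMSchwCausal} gives $1-\tfrac{2\bhm}{r}>\tfrac12$; hence $(1-\tfrac{2\bhm}{r})^{\pm1}\in\CI(M)$, each differing from $1$ by an element of $\rho_0 x_{\!\scri}^2\CI(M)$. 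The membership statements now follow by expanding $g_\bhm\in\CI(M;S^2\wt T^*M)$ (Lemma~\ref{LemmaNMSchwMet}) in the splitting~\eqref{EqNMVbSplit}: by~\eqref{EqNMSchw} only the $\dd x^0\,\dd x^1$ and $r^2\slg$ summands occur, with smooth coefficients, and the inclusions $\dd x^0\otimes_s\dd x^1\in\rho_0^{-2}x_{\!\scri}^{-2}\CI$, $r^2\slg\in\rho_0^{-2}x_{\!\scri}^{-2}\CI$ from~\eqref{EqNMBundleRel} give $g_\bhm\in\rho_0^{-2}x_{\!\scri}^{-2}\CI(M;S^2\,\Teb^*M)$; dually, $g_\bhm^{-1}=-4(1-\tfrac{2\bhm}{r})^{-1}\pa_0\otimes_s\pa_1+r^{-2}\slg^{-1}$ together with $\pa_0\in\rho_0\rho_{\!\scri}\Veb(M)$, $\pa_1\in\rho_0\Veb(M)$ from~\eqref{EqNMPa01} and $r^{-1}V=\rho_0 x_{\!\scri}\cdot x_{\!\scri}V\in\rho_0 x_{\!\scri}\Veb(M)$ for $V\in\cV(\Sph^2)$ (the identity used in proving Lemma~\ref{LemmaNMBundleRel}) gives $g_\bhm^{-1}\in\rho_0^2 x_{\!\scri}^2\CI(M;S^2\,\Teb M)$.

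For the leading terms I would first express $\dd x^0,\dd x^1$ in the eb-coframe $\tfrac{\dd\rho_0}{\rho_0},\tfrac{\dd\rho_{\!\scri}}{\rho_{\!\scri}}$. From $x^1=t-r_*=-\rho_0^{-1}$ we get $\dd x^1=\rho_0^{-1}\tfrac{\dd\rho_0}{\rho_0}$ exactly; writing $x^0=x^1+2r_*$ with $\dd r_*=(1-\tfrac{2\bhm}{r})^{-1}\dd r$ (which absorbs the logarithm in~\eqref{EqNMx0x1}) and $r=\rho_0^{-1}x_{\!\scri}^{-2}$, hence $\tfrac{\dd r}{r}=-\tfrac{\dd\rho_0}{\rho_0}-\tfrac{\dd\rho_{\!\scri}}{\rho_{\!\scri}}$, we obtain the exact identity $\dd x^0=\rho_0^{-1}\tfrac{\dd\rho_0}{\rho_0}-2(1-\tfrac{2\bhm}{r})^{-1}\rho_0^{-1}x_{\!\scri}^{-2}\bigl(\tfrac{\dd\rho_0}{\rho_0}+\tfrac{\dd\rho_{\!\scri}}{\rho_{\!\scri}}\bigr)$. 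Substituting into $-(1-\tfrac{2\bhm}{r})\,\dd x^0\otimes_s\dd x^1$, the product $(1-\tfrac{2\bhm}{r})(1-\tfrac{2\bhm}{r})^{-1}=1$ collapses the cross term, and after multiplying by $\rho_0^2 x_{\!\scri}^2$ one is left with $2\bigl(\tfrac{\dd\rho_0}{\rho_0}\bigr)^2+2\tfrac{\dd\rho_0}{\rho_0}\otimes_s\tfrac{\dd\rho_{\!\scri}}{\rho_{\!\scri}}$ plus the single error $-x_{\!\scri}^2(1-\tfrac{2\bhm}{r})\bigl(\tfrac{\dd\rho_0}{\rho_0}\bigr)^2\in x_{\!\scri}^2\CI(M;S^2\,\Teb^*M)$; since moreover $\rho_0^2 x_{\!\scri}^2\cdot r^2\slg=x_{\!\scri}^{-2}\slg$ is the smooth section $\slg_{ab}\tfrac{\dd x^a}{x_{\!\scri}}\tfrac{\dd x^b}{x_{\!\scri}}$ of $S^2\,\Teb^*M$, the first congruence follows (in fact modulo $x_{\!\scri}^2\CI$). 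For the second congruence I would either run the analogous computation on $-4(1-\tfrac{2\bhm}{r})^{-1}\pa_0\otimes_s\pa_1+r^{-2}\slg^{-1}$, using the leading terms $\pa_0\equiv-\tfrac12\rho_0\rho_{\!\scri}\,\rho_{\!\scri}\pa_{\rho_{\!\scri}}$, $\pa_1\equiv\rho_0(\rho_0\pa_{\rho_0}-\rho_{\!\scri}\pa_{\rho_{\!\scri}})$ of~\eqref{EqNMPa01} and $\rho_{\!\scri}=x_{\!\scri}^2$, or, more cleanly, simply invert the block-diagonal leading term of $\rho_0^2 x_{\!\scri}^2 g_\bhm$ just found: in the frame $\tfrac{\dd\rho_0}{\rho_0},\tfrac{\dd\rho_{\!\scri}}{\rho_{\!\scri}}$ its Gram matrix $\bigl(\begin{smallmatrix}2&1\\1&0\end{smallmatrix}\bigr)$ inverts to $\bigl(\begin{smallmatrix}0&1\\1&-2\end{smallmatrix}\bigr)$, i.e.\ to $2(\rho_0\pa_{\rho_0}-\rho_{\!\scri}\pa_{\rho_{\!\scri}})\otimes_s\rho_{\!\scri}\pa_{\rho_{\!\scri}}$, while the spherical block $x_{\!\scri}^{-2}\slg$ inverts to $x_{\!\scri}^2\slg^{-1}$, the off-diagonal corrections contributing only to the $x_{\!\scri}\CI$ remainder.

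Every step is an exact (or finite asymptotic) coordinate manipulation, so there is no real obstacle. The only mildly delicate points are confirming that the tortoise coordinate does not break smoothness on $M$ --- it does not, precisely because $\dd r_*=(1-\tfrac{2\bhm}{r})^{-1}\dd r$ with $(1-\tfrac{2\bhm}{r})^{-1}\in\CI(M)$, which is where the bound $r>4\bhm$ on $M$ enters --- and keeping straight that $\rho_{\!\scri}=x_{\!\scri}^2$, so that an $\cO(\rho_0 x_{\!\scri}^2)$ multiplicative correction to a term of eb-weight $\rho_0^{-2}x_{\!\scri}^{-2}$ is a genuine $x_{\!\scri}^2\CI(M;S^2\,\Teb^*M)$ error, a fortiori negligible modulo $x_{\!\scri}\CI$. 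If anything takes care, it is arranging the $\dd x^0$ computation so that the remainder in the first congruence appears as a single smooth $x_{\!\scri}^2$-term rather than an opaque $\cO(x_{\!\scri})$ one.
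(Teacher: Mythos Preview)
Your proof is correct and follows exactly the approach the paper intends: the corollary is stated without proof, as a direct consequence of Lemma~\ref{LemmaNMBundleRel}, the formula~\eqref{EqNMSchw}, and the identities~\eqref{EqNMPa01}. You have simply written out the coordinate bookkeeping in full, and every step checks.
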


\begin{rmk}[Connection of weighted eb-metrics]
\label{RmkNMKoszul}
  The Koszul formula, together with the fact that $\Veb(M)$ is a Lie algebra (with differentiation along any element of $\Veb(M)$ being bounded on $\rho_0^{\ell_0}x_{\!\scri}^{\ell_{\!\scri}}\CI(M)$ for any $\ell_0,\ell_{\!\scri}\in\R$), implies that the Levi-Civita connection of $g_\bhm\in\rho_0^{-2}x_{\!\scri}^{-2}\CI(M;S^2\,\Teb^*M)$ satisfies $\nabla \in \Diffeb^1(M;\Teb M,\Teb^*M\otimes\Teb M)$. Writing eb-tensor bundles as $\Teb^{p,q}M=\Teb^*M^{\otimes p}\otimes\Teb M^{\otimes q}$, this gives
  \[
    \nabla \in \Diffeb^1(M;\Teb^{p,q}M,\Teb^{p+1,q}M).
  \]
  In particular, the tensor wave operator satisfies
  \begin{equation}
  \label{EqNMKoszul}
    \Box_{g_\bhm}\in\rho_0^2 x_{\!\scri}^2\Diffeb^2(M;\Teb^{p,q}M),\qquad p,q\in\N_0,
  \end{equation}
\end{rmk}

\begin{lemma}[$\Box_{g_\bhm}$ as an eb-operator; commutators]
\label{LemmaNMOp}
  Consider $\Box_{g_\bhm}$ acting on functions. The operator $L:=\rho_{\!\scri}\rho^{-3}\Box_{g_\bhm}\rho\in\Diffeb^2(M)$ is equal to
  \begin{equation}
  \label{EqNMOp}
    L = -2\rho_{\!\scri}\pa_{\rho_{\!\scri}}(\rho_0\pa_{\rho_0}-\rho_{\!\scri}\pa_{\rho_{\!\scri}}) + x_{\!\scri}^2\slDelta + \tilde L,\qquad \tilde L\in x_{\!\scri}\Diffeb^2(M).
  \end{equation}
  If $\Omega\in\cV(\Sph^2)$ is a spherical vector field, then\footnote{In fact, we have $[L,\rho_0\pa_{\rho_0}],[L,\Omega]\in x_{\!\scri}\Diffeb^2(M)$ when $\Omega$ is a rotation vector field.}
  \begin{equation}
  \label{EqNMOpComm}
    [L,\rho_0\pa_{\rho_0}],\ [L,\rho_{\!\scri}\pa_{\rho_{\!\scri}}],\ [L,\Omega] \in x_{\!\scri}\Diff_{\ebop;\bop}^{1,1}(M).
  \end{equation}
\end{lemma}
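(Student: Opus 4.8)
The plan is to reduce everything to the explicit normal form~\eqref{EqNMOp}, so I would prove~\eqref{EqNMOp} first and then read off~\eqref{EqNMOpComm} by bookkeeping in the b- and eb-calculi. That $L=\rho_{\!\scri}\rho^{-3}\Box_{g_\bhm}\rho$ belongs to $\Diffeb^2(M)$ is immediate from~\eqref{EqNMKoszul} with $p=q=0$: conjugating an operator in $\rho_0^2 x_{\!\scri}^2\Diffeb^2(M)$ by a monomial in the boundary defining functions $\rho_0,x_{\!\scri}$ stays in $\rho_0^2 x_{\!\scri}^2\Diffeb^2(M)$, and $\rho_{\!\scri}\rho^{-3}\cdot\rho_0^2 x_{\!\scri}^2\cdot\rho=\rho_{\!\scri}\rho_0^2 x_{\!\scri}^2\rho^{-2}=1$ since $\rho=\rho_0 x_{\!\scri}^2$ and $\rho_{\!\scri}=x_{\!\scri}^2$.

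To pin down the leading term I would compute $\Box_{g_\bhm}$ on functions directly from~\eqref{EqNMSchw}. Using $\pa_0 r=-\pa_1 r=\tfrac12(1-\tfrac{2\bhm}{r})$ (from~\eqref{EqNMPa01r}) and the volume density $\tfrac12(1-\tfrac{2\bhm}{r})r^2\,|\dd x^0\,\dd x^1\,\dd\omega|$, this gives
\[
  \Box_{g_\bhm} = \frac{4}{1-2\bhm/r}\,\pa_0\pa_1 + \frac{2}{r}\,(\pa_1-\pa_0) + \frac{1}{r^2}\,\slDelta ,
\]
with $\slDelta\ge 0$ the Laplacian on $\Sph^2$. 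Conjugating by $\rho=r^{-1}$ cancels the two first-order terms exactly---this is the reason for the prefactor $\rho$---leaving $\rho^{-3}\Box_{g_\bhm}\rho = \tfrac{4 r^2}{1-2\bhm/r}\pa_0\pa_1+\slDelta+\tfrac{2\bhm}{r}$; multiplication by $\rho_{\!\scri}$ produces $L$. Here $\rho_{\!\scri}\slDelta=x_{\!\scri}^2\slDelta$ is the second term of~\eqref{EqNMOp} and $\rho_{\!\scri}\tfrac{2\bhm}{r}=2\bhm\rho_0 x_{\!\scri}^4\in x_{\!\scri}\Diffeb^2(M)$ is absorbed into $\tilde L$. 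For the first term I would insert the exact identities~\eqref{EqNMPa01} for $\pa_0$ and $\pa_1$ and simplify using $r^2=\rho_0^{-2}\rho_{\!\scri}^{-2}$; this yields $\tfrac{4\rho_{\!\scri}r^2}{1-2\bhm/r}\pa_0\pa_1 = -2\,\rho_{\!\scri}\pa_{\rho_{\!\scri}}\bigl(\rho_0\pa_{\rho_0}-\bigl(1-\tfrac12\rho_{\!\scri}(1-\tfrac{2\bhm}{r})\bigr)\rho_{\!\scri}\pa_{\rho_{\!\scri}}\bigr)$, which agrees with the claimed leading term of~\eqref{EqNMOp} modulo $x_{\!\scri}\Diffeb^2(M)$ because $\tfrac12\rho_{\!\scri}(1-\tfrac{2\bhm}{r})\in x_{\!\scri}^2\CI(M)$ and $\rho_{\!\scri}\pa_{\rho_{\!\scri}}$ preserves powers of $x_{\!\scri}$.

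For~\eqref{EqNMOpComm} I would split $L=L_0+\tilde L$ with $L_0=-2\rho_{\!\scri}\pa_{\rho_{\!\scri}}(\rho_0\pa_{\rho_0}-\rho_{\!\scri}\pa_{\rho_{\!\scri}})+x_{\!\scri}^2\slDelta$ and $\tilde L\in x_{\!\scri}\Diffeb^2(M)$. All three of $\rho_0\pa_{\rho_0},\rho_{\!\scri}\pa_{\rho_{\!\scri}},\Omega$ lie in $\Vb(M)$, hence $V x_{\!\scri}\in x_{\!\scri}\CI(M)$; writing $\tilde L=x_{\!\scri}Q$ with $Q\in\Diffeb^2(M)$ gives $[\tilde L,V]=x_{\!\scri}[Q,V]-(Vx_{\!\scri})Q\in x_{\!\scri}\Diff_{\ebop;\bop}^{1,1}(M)$, using the inclusion $[\Diffeb^2(M),\Diffb^1(M)]\subseteq\Diff_{\ebop;\bop}^{1,1}(M)$ (Leibniz rule together with $\Veb(M)\subset\Vb(M)$ being Lie algebras, and $\Diffeb^2(M),\Diffb^1(M)\subseteq\Diff_{\ebop;\bop}^{1,1}(M)$). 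For the $L_0$-part I would use that $\rho_0\pa_{\rho_0}$, $\rho_{\!\scri}\pa_{\rho_{\!\scri}}$ and $\slDelta$ pairwise commute, that $\Omega\in\cV(\Sph^2)$ commutes with $\rho_0\pa_{\rho_0}$ and $\rho_{\!\scri}\pa_{\rho_{\!\scri}}$, and that $[\rho_{\!\scri},\rho_{\!\scri}\pa_{\rho_{\!\scri}}]=-\rho_{\!\scri}$, to get $[L_0,\rho_0\pa_{\rho_0}]=0$, $[L_0,\rho_{\!\scri}\pa_{\rho_{\!\scri}}]=-x_{\!\scri}^2\slDelta$, and $[L_0,\Omega]=x_{\!\scri}^2[\slDelta,\Omega]$ (which vanishes for a rotation field). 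Writing $\slDelta$---and hence $[\slDelta,\Omega]$---as a sum $\sum\Omega_i'\Omega_i''$ of products of two spherical vector fields (via the rotation generators), one has $x_{\!\scri}^2\slDelta=x_{\!\scri}\sum(x_{\!\scri}\Omega_i')\Omega_i''\in x_{\!\scri}\Diff_{\ebop;\bop}^{1,1}(M)$ and likewise for $x_{\!\scri}^2[\slDelta,\Omega]$. Adding the $L_0$- and $\tilde L$-contributions gives~\eqref{EqNMOpComm}; the sharper footnote claims for $V=\rho_0\pa_{\rho_0}$ and for rotation fields $\Omega$ follow since then $[L,V]=[\tilde L,V]$ and $[\Diffeb^2(M),V]\subseteq\Diffeb^2(M)$.

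The main obstacle is the extraction of the extra factor $x_{\!\scri}$ in~\eqref{EqNMOpComm}: knowing only $L\in\Diffeb^2(M)$ would give $[L,V]\in\Diffeb^2(M)$ with no decay gain, so one genuinely needs the structure of $L$ modulo $x_{\!\scri}\Diffeb^2(M)$---that its leading part $L_0$ is scale-invariant under $\rho_0\pa_{\rho_0}$ and invariant under rotations, with its only $\rho_{\!\scri}$-inhomogeneous piece being $x_{\!\scri}^2\slDelta$, whose commutator with $\rho_{\!\scri}\pa_{\rho_{\!\scri}}$ is again of the same form. The remaining work is routine, except that one must keep careful track of eb- versus b-orders so that the commutator with the non-eb vector field $\Omega$ lands in the mixed space $\Diff_{\ebop;\bop}^{1,1}(M)$ rather than (spuriously) in $\Diffeb^2(M)$.
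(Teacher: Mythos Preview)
Your proof is correct and follows essentially the same strategy as the paper's: establish the normal form~\eqref{EqNMOp} first, then split $L=L_0+\tilde L$ and handle commutators with each piece separately. Two minor differences are worth noting. First, you compute $\Box_{g_\bhm}$ exactly in null coordinates and then conjugate, whereas the paper argues that modulo $x_{\!\scri}\Diffeb^2(M)$ one may replace $g_\bhm$ by the Minkowski metric $\ubar g$ (since the difference contributes only to $\tilde L$) and computes $\Box_{\ubar g}$ instead; your route is more explicit but arrives at the same place. Second, for $[\tilde L,\Omega]$ you invoke the general inclusion $[\Diffeb^2(M),\Diffb^1(M)]\subset\Diff_{\ebop;\bop}^{1,1}(M)$, which is correct and suffices for~\eqref{EqNMOpComm}; the paper instead proves the sharper fact that $[\Omega,-]$ preserves $\Diffeb^k(M)$ when $\Omega$ is lifted from the base $\Sph^2$ (by writing out $[\Omega,V]$ in local coordinates and observing it lands back in $\Veb(M)$), which yields $[\tilde L,\Omega]\in x_{\!\scri}\Diffeb^2(M)$. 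The paper's sharper statement is precisely what is needed for the footnote, and your final sentence asserts $[\Diffeb^2(M),\Omega]\subset\Diffeb^2(M)$ for rotation fields without justification---this is true but not automatic from $\Omega\in\Vb(M)$ alone (naively $\Omega\in x_{\!\scri}^{-1}\Veb(M)$ would give a loss), and requires exactly the lifted-vector-field computation the paper carries out.
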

\begin{proof}
  The membership $L\in\Diffeb^2(M)$ is an immediate consequence of~\eqref{EqNMKoszul}, and will be confirmed here by a direct calculation. The expression for $L\bmod x_{\!\scri}\Diffeb^2(M)$ only depends on $g_\bhm$ modulo $\rho_0^{-2}x_{\!\scri}^{-1}\CI(M;S^2\,\Teb^*M)$; we may thus replace $g_\bhm^{-1}$ by the Minkowski dual metric $\ubar g^{-1}=-4\pa_0\otimes_s\pa_1+r^{-2}\slg^{-1}$, cf.\ \eqref{EqLMinkMet}, for which, in view of~\eqref{EqNMPa01}--\eqref{EqNMPa01r},
  \begin{equation}
  \label{EqNMOpMink}
  \begin{split}
    \Box_{\ubar g} &= 2 r^{-2}\pa_0 r^2\pa_1 + 2 r^{-2}\pa_1 r^2\pa_0 + r^{-2}\slDelta \\
      &\equiv -2\rho_0^2\rho_{\!\scri}(\rho_{\!\scri}\pa_{\rho_{\!\scri}}-1)(\rho_0\pa_{\rho_0}-\rho_{\!\scri}\pa_{\rho_{\!\scri}}) + \rho_0^2\rho_{\!\scri}^2\slDelta
  \end{split}
  \end{equation}
  modulo $\rho_0^2 x_{\!\scri}^3\Diffeb^2(M)$. Multiplying this on the left by $\rho_{\!\scri}\rho^{-3}=\rho_{\!\scri}^{-2}\rho_0^{-3}$ and on the right by $\rho=\rho_0\rho_{\!\scri}$ proves~\eqref{EqNMOp}.

  The expression~\eqref{EqNMOp} together with $\rho_0\pa_{\rho_0}\in\Veb(M)$ immediately gives $[L,\rho_0\pa_{\rho_0}]=[\tilde L,\rho_0\pa_{\rho_0}]\in x_{\!\scri}\Diffeb^2(M)$ since $\Veb(M)$ is a Lie algebra. Similarly,
  \[
    [L,\rho_{\!\scri}\pa_{\rho_{\!\scri}}] = -\rho_{\!\scri}\slDelta + [\tilde L,\rho_{\!\scri}\pa_{\rho_{\!\scri}}],
  \]
  with the commutator lying in $x_{\!\scri}\Diffeb^2(M)$; in the first term on the other hand, we can write $\slDelta$ as a finite sum $\slDelta=\sum_k \Omega_{k,1}\Omega_{k,2} + \Omega^\flat + f$ with spherical vector fields $\Omega_{k,1},\Omega_{k,2},\Omega^\flat\in\cV(\Sph^2)$ and $f\in\CI(\Sph^2)$; but $\cV(\Sph^2)\subset x_{\!\scri}^{-1}\Veb(M)\cap\Vb(M)$, and hence
  \begin{equation}
  \label{EqNMOpslDel}
  \begin{split}
    \rho_{\!\scri}\slDelta&=x_{\!\scri}\cdot\sum_k (x_{\!\scri}\Omega_{k,1})\Omega_{k,2} + x_{\!\scri}\cdot x_{\!\scri}\Omega^\flat + x_{\!\scri}^2 f \\
      &\in x_{\!\scri}\Diff_{\ebop;\bop}^{1,1}(M) + x_{\!\scri}\Diffeb^1(M) + x_{\!\scri}^2\Diffeb^0(M) = x_{\!\scri}\Diff_{\ebop;\bop}^{1,1}(M).
  \end{split}
  \end{equation}
  
  Finally, we consider
  \[
    [L,\Omega]=\rho_{\!\scri}[\Omega,\slDelta]+[\tilde L,\Omega].
  \]
  The term $[\Omega,\slDelta]\in\Diff^2(\Sph^2)$ contributes $\rho_{\!\scri}[\Omega,\slDelta]\in x_{\!\scri}\Diff_{\ebop;\bop}^{1,1}(M)$ by the same argument as in~\eqref{EqNMOpslDel}. In the second term, we use the fact that lifts of vector fields from the base $\Sph^2$ of the fibration $\scri^+\to\Sph^2$ enjoy improved commutation properties with eb-differential operators, cf.\ \cite[\S5.1]{HintzVasyScrieb}. Concretely, in local coordinates~\eqref{EqNDCoord} (with $y^2,y^3$ local coordinates on $\Sph^2$), we have $\Omega=\sum_{j=2}^3\Omega^j(y)\pa_{y^j}$ where the $\Omega^j$ are smooth in $y$ (and independent of $\rho_0$); writing any $V\in\Veb(M)$ as $V=a^0\rho_0\pa_{\rho_0}+a^1\rho_{\!\scri}\pa_{\rho_{\!\scri}}+\sum_{j=2}^3 a^j x_{\!\scri}\pa_{y^j}$ with $a^\mu\in\CI(M)$, this gives
  \begin{subequations}
  \begin{equation}
  \label{EqNMWaveCommOmega}
    [\Omega,V] = (\Omega a^0)\rho_0\pa_{\rho_0} + (\Omega a^1)\rho_{\!\scri}\pa_{\rho_{\!\scri}} + x_{\!\scri} \sum_j [\Omega,a^j\pa_{y^j}] \in \Veb(M).
  \end{equation}
  (This improves over the naive expectation coming from $\Omega\in x_{\!\scri}^{-1}\Veb(M)$ that one only has $[\Omega,V]\in x_{\!\scri}^{-1}\Veb(M)$.) Using the Leibniz rule, we infer that
  \begin{equation}
  \label{EqNMWaveCommOmega2}
    [-,\Omega] \colon \Diffeb^{2,(\alpha_0,\alpha_1)}(M) \to \Diffeb^{2,(\alpha_0,\alpha_1)}(M),\qquad \alpha_0,\alpha_1\in\R.
  \end{equation}
  \end{subequations}
  Applying this to $\tilde L\in x_{\!\scri}\Diffeb^2(M)$ gives $[\tilde L,\Omega]\in x_{\!\scri}\Diffeb^2(M)$. This finishes the proof.
\end{proof}

\begin{prop}[Energy estimate]
\label{PropNMWave}
  In the notation~\eqref{EqNMMfd}, let $c<\bar x_{\!\scri}$. Define
  \[
    \Omega = \{ x_{\!\scri}<c, \rho_0<1 \} \subset M,\quad
    \Sigma = x_{\!\scri}^{-1}(c) \subset M.
  \]
  Let $k\in\N_0$. Let $\alpha_0,\alpha_{\!\scri}\in\R$ with $\alpha_{\!\scri}<\min(\alpha_0,0)$. Suppose $f\in\Hb^{k,(\alpha_0,2\alpha_{\!\scri})}(\Omega)=\rho_0^{\alpha_0}x_{\!\scri}^{2\alpha_{\!\scri}}\Hb^k(\Omega)=\rho_0^{\alpha_0}\rho_{\!\scri}^{\alpha_{\!\scri}}\Hb^k(\Omega)$ vanishes near $\Sigma$. Then the unique forward solution $u$ (i.e.\ with vanishing Cauchy data at $\Sigma$) of
  \begin{equation}
  \label{EqNMWaveEq}
    L u=f,\qquad L=\rho_{\!\scri}\rho^{-3}\Box_{g_\bhm}\rho,
  \end{equation}
  satisfies $u\in H_{\ebop;\bop}^{(1;k),(\alpha_0,2\alpha_{\!\scri})}(\Omega)$, with an estimate\footnote{Thus, $u$ gains $1$ eb-derivative relative to $f$; and $u$ inherits the full amount of b-regularity from $f$.}
  \begin{equation}
  \label{EqNMWaveEst}
    \|u\|_{H_{\ebop;\bop}^{(1;k),(\alpha_0,2\alpha_{\!\scri})}(\Omega)}\leq C\|f\|_{\Hb^{k,(\alpha_0,2\alpha_{\!\scri})}(\Omega)} = C\|f\|_{H_{\ebop;\bop}^{(0;k),(\alpha_0,2\alpha_{\!\scri})}(\Omega)}.
  \end{equation}
\end{prop}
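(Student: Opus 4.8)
The plan is to establish \eqref{EqNMWaveEst} first for $k=0$ by a weighted vector field multiplier (energy) argument, and then to obtain higher $\bop$-regularity by induction on $k$, commuting $\bop$-vector fields through $L$ and invoking the commutator bounds of Lemma~\ref{LemmaNMOp}. Existence and uniqueness of the forward solution $u$ are standard, using that $\Sigma=x_{\!\scri}^{-1}(c)$ is spacelike and the hypersurfaces $\rho_0^{-1}(c')$, $c'\in(0,1]$, are characteristic (Lemma~\ref{LemmaNMSchwCausal}); what requires proof is the weighted bound, an a priori estimate. Throughout I use that $L\in\Diffeb^2(M)$ has leading part $-2\rho_{\!\scri}\pa_{\rho_{\!\scri}}(\rho_0\pa_{\rho_0}-\rho_{\!\scri}\pa_{\rho_{\!\scri}})+x_{\!\scri}^2\slDelta$ modulo $x_{\!\scri}\Diffeb^2(M)$ by \eqref{EqNMOp}, and that $L$ is, up to the conjugation and rescaling in \eqref{EqNMWaveEq}, the scalar wave operator of the Lorentzian eb-metric $\rho_0^2 x_{\!\scri}^2 g_\bhm$ of Corollary~\ref{CorNMLot}.

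For $k=0$: since $\rho_0$ is nondecreasing along future-directed causal curves, the causal future of $\rho_0^{-1}(\epsilon_0)$ is contained in $\{\rho_0\geq\epsilon_0\}$, so for $f$ supported in $\{\rho_0\geq\epsilon_0\}$ — a class whose union over $\epsilon_0\downarrow 0$ is dense in $\Hb^{0,(\alpha_0,2\alpha_{\!\scri})}(\Omega)$ — the forward solution $u$ is supported in $\{\rho_0\geq\epsilon_0\}$ too, and it suffices to treat such $f$; then the integration region below may be taken compact with no boundary contribution from $I^0$. Conjugating out the weight, $u_\ast:=\rho_0^{-\alpha_0}\rho_{\!\scri}^{-\alpha_{\!\scri}}u$ solves $L_\ast u_\ast=f_\ast$ with $f_\ast:=\rho_0^{-\alpha_0}\rho_{\!\scri}^{-\alpha_{\!\scri}}f\in L^2_\bop(\Omega)$ and, by \eqref{EqNMOp}, $L_\ast:=\rho_0^{-\alpha_0}\rho_{\!\scri}^{-\alpha_{\!\scri}}L\rho_0^{\alpha_0}\rho_{\!\scri}^{\alpha_{\!\scri}}\in\Diffeb^2(M)$ has leading part $-2(\rho_{\!\scri}\pa_{\rho_{\!\scri}}+\alpha_{\!\scri})\bigl((\rho_0\pa_{\rho_0}+\alpha_0)-(\rho_{\!\scri}\pa_{\rho_{\!\scri}}+\alpha_{\!\scri})\bigr)+x_{\!\scri}^2\slDelta$ modulo $x_{\!\scri}\Diffeb^2(M)$. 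I would pair $L_\ast u_\ast=f_\ast$ in $L^2_\bop(\Omega)$ with $T u_\ast$ plus a Friedrichs-type lower-order correction $\propto u_\ast$, where $T:=\rho_0\pa_{\rho_0}-2\rho_{\!\scri}\pa_{\rho_{\!\scri}}$, which by \eqref{EqNMPa01} agrees modulo lower order terms with $r(\pa_t+\pa_{r_*})+(r_*-t)\pa_t$ — i.e.\ with the weight-free part of the multiplier $W$ of the introduction — and which is future-directed timelike for $g_\bhm$. Integrating the resulting divergence identity over $\Omega\cap\{x_{\!\scri}>\delta\}$ against a fixed b-density, the divergence theorem leaves: a vanishing term on $\Sigma$ (zero Cauchy data); a term on the future null boundary $\rho_0^{-1}(1)$ which for the future-directed $T$ is a nonnegative flux of the derivatives of $u_\ast$ tangent to it and of $u_\ast$ itself, hence dropped; a nonnegative term on the spacelike future boundary $\{x_{\!\scri}=\delta\}$, dropped; and no term near $\{\rho_0=\epsilon_0\}$. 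The remaining bulk term — the deformation tensor of $T$ contracted with the stress-energy tensor of $u_\ast$, plus the contributions of the Friedrichs correction and of the weight — is coercive for $\|u_\ast\|_{\Heb^1(\Omega)}^2$: the coefficients of $|\rho_0\pa_{\rho_0}u_\ast|^2$, $|\rho_{\!\scri}\pa_{\rho_{\!\scri}}u_\ast|^2$, $|x_{\!\scri}\Omega u_\ast|^2$, $|u_\ast|^2$ generated by the leading part of $L_\ast$ have the correct signs precisely when $\alpha_0-\alpha_{\!\scri}>0$ and $\alpha_{\!\scri}<0$, i.e.\ $\alpha_{\!\scri}<\min(\alpha_0,0)$; the $f_\ast$-term is absorbed by Cauchy–Schwarz into $\eta\|u_\ast\|_{\Heb^1}^2+C_\eta\|f_\ast\|_{L^2_\bop}^2$. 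Finally, $L_\ast$ minus its leading part lies in $x_{\!\scri}\Diffeb^2(M)$, so near $\scri^+$ its contribution carries a factor $x_{\!\scri}$ and is absorbed into the coercive bulk; on the compact subset $\{x_{\!\scri}\geq\eta_0\}\cap\supp u$ of $M^\circ$, where $L$ is uniformly nondegenerate hyperbolic, a standard energy estimate foliated by the spacelike level sets of $x_{\!\scri}$ together with Grönwall's inequality closes the estimate. Patching the two regions across $\{x_{\!\scri}=\eta_0\}$ and letting $\delta\downarrow 0$ and then $\epsilon_0\downarrow 0$ yields \eqref{EqNMWaveEst} for $k=0$.

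For $k\geq 1$, suppose the estimate holds at lower order. If $u$ solves $Lu=f$ with zero Cauchy data at $\Sigma$ and $V$ is one of the $\bop$-vector fields $\rho_0\pa_{\rho_0}$, $\rho_{\!\scri}\pa_{\rho_{\!\scri}}$, or a spherical vector field spanning $\cV(\Sph^2)$, then $Vu$ solves $L(Vu)=Vf+[L,V]u$ with zero Cauchy data, and $[L,V]\in x_{\!\scri}\Diff_{\ebop;\bop}^{1,1}(M)$ by Lemma~\ref{LemmaNMOp}. Iterating and commuting $\bop$-vector fields past the factor $x_{\!\scri}$ (which they preserve modulo $x_{\!\scri}\CI(M)$), one finds for $|\beta|\leq k$ that $[L,V^\beta]u$ is a finite sum of terms in which an operator from $x_{\!\scri}\Diff_{\ebop;\bop}^{1,1}(M)$ acts innermost on $u$ and at most $|\beta|-1$ further $\bop$-vector fields act outside; since $\Diff_{\ebop;\bop}^{1,1}(M)$ maps $H_{\ebop;\bop}^{(1;j),(\alpha_0,2\alpha_{\!\scri})}(\Omega)$ into $H_{\ebop;\bop}^{(0;j-1),(\alpha_0,2\alpha_{\!\scri})}(\Omega)=\Hb^{j-1,(\alpha_0,2\alpha_{\!\scri})}(\Omega)$, this gives $\|[L,V^\beta]u\|_{\Hb^{0,(\alpha_0,2\alpha_{\!\scri})}(\Omega)}\lesssim\|u\|_{H_{\ebop;\bop}^{(1;|\beta|),(\alpha_0,2\alpha_{\!\scri})}(\Omega)}$ with a gain of a factor $x_{\!\scri}$. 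Applying the $k=0$ estimate at the appropriate $\bop$-regularity to $V^\beta u$ for all $|\beta|\leq k$, and treating the commutator terms exactly as the error $L_\ast-(\text{leading part})$ above — the $x_{\!\scri}$-gain giving absorption near $\scri^+$, a Grönwall argument in the $x_{\!\scri}$-foliation handling them on the compact region away from $\scri^+$ — then summing over $\beta$ and absorbing the lower-order contributions yields \eqref{EqNMWaveEst}.

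I expect the main obstacle to be the $k=0$ case: verifying that the bulk produced by $T$ (equivalently $W$), the Friedrichs correction, and the weight $\rho_0^{\alpha_0}\rho_{\!\scri}^{\alpha_{\!\scri}}$ is coercive for $\|u_\ast\|_{\Heb^1}^2$ exactly under $\alpha_{\!\scri}<\min(\alpha_0,0)$, and marshalling the boundary terms — the coercive flux through $\rho_0^{-1}(1)$, the spacelike flux through $\{x_{\!\scri}=\delta\}$, and the support/density reduction eliminating any contribution from $I^0$ — so that none obstructs the estimate. A secondary technical point is the bookkeeping in the higher-regularity step: one must retain both the $x_{\!\scri}$-gain and the fact that $[L,V^\beta]$ involves only a single eb-derivative acting innermost on $u$, which is what makes the induction close without loss of regularity.
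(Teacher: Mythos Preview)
Your proposal is correct and follows essentially the same approach as the paper: the same weighted multiplier $T=\rho_0\pa_{\rho_0}-2\rho_{\!\scri}\pa_{\rho_{\!\scri}}$ (the paper's $V$ with $c=1$, combined with the weight $w^2=\rho_0^{-2\alpha_0}\rho_{\!\scri}^{-2\alpha_{\!\scri}}$ which you instead conjugate out) yields a coercive bulk under $\alpha_{\!\scri}<\min(\alpha_0,0)$, and the higher $\bop$-regularity is obtained by the identical commutator induction using Lemma~\ref{LemmaNMOp}. The only cosmetic differences are that the paper handles the $L^2$ term via a Poincar\'e inequality in $\rho_{\!\scri}$ rather than a Friedrichs correction, and it absorbs the $x_{\!\scri}$-gain directly in the Sobolev norm rather than via a Gr\"onwall argument away from $\scri^+$.
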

\begin{proof}
  We follow the arguments used in the proof of \cite[Propositions~4.3 and 4.8]{HintzVasyMink4} and shall thus be brief. While one can work directly with $\Box_{g_\bhm}$ (as done in~\cite[\S6]{HintzVasyScrieb}), we work with $L$ in order to simplify the weight arithmetic. Note now that $\Box_{g_\bhm}$ is symmetric with respect to the volume density
  \[
    |\dd g_\bhm|\in\rho_0^{-4}x_{\!\scri}^{-4}\CI(M;|\Lambda^4\,\Teb^*M|) = \rho_0^{-4}\rho_{\!\scri}^{-3}\CI(M;|\Lambda^4\,\Tb^*M|),
  \]
  where we use Lemma~\ref{LemmaNMBundleRel} and the relationship $|\tfrac{\dd\rho_0}{\rho_0}\tfrac{\dd\rho_{\!\scri}}{\rho_{\!\scri}}\tfrac{\dd\slg}{x_{\!\scri}^{\dim\Sph^2}}|=\rho_{\!\scri}^{-1}|\tfrac{\dd\rho_0}{\rho_0}\tfrac{\dd\rho_{\!\scri}}{\rho_{\!\scri}}\dd\slg|$ between smooth nonzero eb- and b-densities. Since $\Box_{g_\bhm}$ is formally self-adjoint on $L^2(M;|\dd g_\bhm|)$, the operator $L$ is formally self-adjoint on $L^2_\bop(M):=L^2(M,\mu_\bop)$, where $\mu_\bop=\rho_0^4\rho_{\!\scri}^3|\dd g_\bhm|$ is a smooth positive b-density on $M$.

  In order to prove~\eqref{EqNMWaveEst}, one can cut and paste energy estimates using domain of dependence properties. Away from $M^\circ$, the estimate~\eqref{EqNMWaveEst} estimates the $H^{k+1}$-norm of $u$ by the $H^k$-norm of $f$; it thus suffices to work near $I^0\cup\scri^+$. But away from $\scri^+$, $L$ is a b-differential operator, $L\in\Diffb^2(M\setminus\scri^+)$, for which $x_{\!\scri}$ is, near $I^0\setminus\scri^+$, a (past directed) time function, the gradient of which can thus be used as a vector field multiplier giving the estimate~\eqref{EqNMWaveEst} away from $\scri^+$---this was discussed in detail in \cite[Proposition~4.3]{HintzVasyMink4}.

  We thus work in a small neighborhood of $\scri^+$, in coordinates $\rho_0,\rho_{\!\scri}$. For $k=0$, \eqref{EqNMWaveEst} follows from an energy estimate on $\Omega$ with the vector field multiplier
  \begin{equation}
  \label{EqNMWaveMult}
    W=w^2 V,\qquad w:=\rho_0^{-\alpha_0}\rho_{\!\scri}^{-\alpha_{\!\scri}},\quad V=-(1+c)\rho_{\!\scri}\pa_{\rho_{\!\scri}}+\rho_0\pa_{\rho_0}\in\Veb(M),
  \end{equation}
  with $c>0$ (one may take $c=1$);\footnote{See \cite[Lemma~4.4]{HintzVasyMink4}, and also \cite[Proof of Theorem~6.4]{HintzVasyScrieb} where a positive multiple of $V$ is used (with a different value of $c$).} $W$ is future timelike. Consider the $L^2_\bop(\Omega)$-pairing
  \begin{equation}
  \label{EqNMWaveComm}
  \begin{split}
    &2\Re\la w L u, w V u\ra = \la Q u,u\ra + \text{[boundary terms]}, \\
    &\qquad Q:=L^*W+W^*L = [L,W]-(\dv_{\mu_\bop}W)L\in\rho_0^{-2\alpha_0}x_{\!\scri}^{-4\alpha_{\!\scri}}\Diffeb^2(M).
  \end{split}
  \end{equation}
  We compute the principal symbol of $Q$ at $\scri^+$. Since
  \begin{equation}
  \label{EqNMWaveDensity}
    \mu_\bop\equiv\ubar\mu_\bop\bmod x_{\!\scri}\CI(M;|\Lambda^4\,\Tb^*M|),\qquad \ubar\mu_\bop:=\Bigl|\frac{\dd\rho_0}{\rho_0}\frac{\dd\rho_{\!\scri}}{\rho_{\!\scri}}\dd\slg\Bigr|,
  \end{equation}
  we may replace $L$ by its leading order term in~\eqref{EqNMOp}, and $\mu_\bop$ by $\ubar\mu_\bop$. Thus,
  \[
    {-}\dv_{\ubar\mu_\bop}W = w^2 V+(w^2 V)^* = -[V,w^2] = w^2\bigl(-2(1+c)\alpha_{\!\scri}+2\alpha_0\bigr),
  \]
  and a quick calculation then gives $Q\equiv\ubar Q\bmod\rho_0^{-2\alpha_0}x_{\!\scri}^{-4\alpha_{\!\scri}+1}\Diffeb^2(M)$ for
  \begin{equation}
  \label{EqNMWaveubarQ}
  \begin{split}
    \ubar Q &= w^2\Bigl(-4\alpha_{\!\scri}(\rho_0 D_{\rho_0}-\rho_{\!\scri} D_{\rho_{\!\scri}})^2 + 4 c(\alpha_0-\alpha_{\!\scri})(\rho_{\!\scri} D_{\rho_{\!\scri}})^2 \\
      &\quad\hspace{10em} + \bigl(1+2(\alpha_0-\alpha_{\!\scri})+c(1-2\alpha_{\!\scri})\bigr)x_{\!\scri}^2\slDelta \Bigr).
  \end{split}
  \end{equation}
  Since $\alpha_{\!\scri}<0$ and $c>0$, $\alpha_0-\alpha_{\!\scri}>0$, and recalling that $\slDelta=-\sltr\slnabla^2$, this is a positive elliptic element of $\rho_0^{-2\alpha_0}x_{\!\scri}^{-4\alpha_{\!\scri}}\Diffeb^2(M)$. Therefore, $\la\ubar Q u,u\ra$ controls one eb-derivative of $u$ in $\rho_0^{\alpha_0}\rho_{\!\scri}^{\alpha_{\!\scri}}L^2_\bop$. Using a Poincar\'e inequality to control $\|u\|_{\rho_0^{\alpha_0}\rho_{\!\scri}^{\alpha_{\!\scri}}L^2_\bop}$ by $\|\rho_{\!\scri}\pa_{\rho_{\!\scri}}u\|_{\rho_0^{\alpha_0}\rho_{\!\scri}^{\alpha_{\!\scri}}L^2_\bop}$ (using $\alpha_{\!\scri}<0$), an application of the Cauchy--Schwarz inequality to~\eqref{EqNMWaveComm} (and using the fact that the boundary terms vanish at $\Sigma$ and have a good sign at $\rho_0^{-1}(1)$ due to the future causal nature of $W$ and $-\dd\rho_0$, and can thus be dropped) implies the estimate~\eqref{EqNMWaveEst} for $k=0$.

  We prove higher b-regularity by commuting the vector fields from Lemma~\ref{LemmaNMOp} through the equation. Suppose we have established~\eqref{EqNMWaveEst} for $k\in\N_0$. Let $f\in\Hb^{k+1,(\alpha_0,2\alpha_{\!\scri})}(\Omega)$. Let $X_0=\rho_0\pa_{\rho_0},X_1=\rho_1\pa_{\rho_1}$, and let $X_2,X_3,X_4\in\cV(\Sph^2)$ be vector fields spanning $T\Sph^2$ pointwise (e.g.\ rotation vector fields); put moreover $X_5\equiv 1$. Thus, the $X_k$ span $\Diffb^1(M)$ over $\CI(M)$. By Lemma~\ref{LemmaNMOp}, we can write
  \[
    [L,X_j] = \sum_{k=0}^5 x_{\!\scri} Y_{j k}X_k,\qquad Y_{j k}\in\Diffeb^1(M).
  \]
  Applying the inductive hypothesis to $L(X_j u)=X_j L u+[L,X_j]u$ gives
  \[
    \|X_j u\|_{H_{\ebop;\bop}^{(1;k),(\alpha_0,2\alpha_{\!\scri})}} \leq C\Bigl(\|X_j f\|_{\Hb^{k,(\alpha_0,2\alpha_{\!\scri})}} + \sum_k \| x_{\!\scri} X_k u \|_{H_{\ebop;\bop}^{(1;k),(\alpha_0,2\alpha_{\!\scri})}}\Bigr)
  \]
  Summing these estimates over $j=0,\ldots,5$, the sums over $k$ on the right can be absorbed into the sum over $j$ on the left, provided we localize to a neighborhood of $\scri^+$ where $x_{\!\scri}$ is small. This gives~\eqref{EqNMWaveEst} for $k+1$ in place of $k$, and completes the proof.
\end{proof}

\begin{rmk}[Multiplier in $(t,r_*)$-coordinates]
\label{RmkNDWaveMult}
  In the coordinates $t<r_*$ and modulo irrelevant lower order terms, the vector field multiplier~\eqref{EqNMWaveMult} is (using Remark~\ref{RmkNBeb})
  \[
    W = r^{2\alpha_{\!\scri}}(r_*-t)^{2(\alpha_0-\alpha_{\!\scri})}\bigl((r_*-t)\pa_t + c r(\pa_t+\pa_{r_*})\bigr).
  \]
\end{rmk}

\subsection{Metric perturbations}
\label{SsNP}

Following the rough discussion of couplings of metric coefficients in~\S\ref{SL}, we now define the function space for metric perturbations of the Schwarzschild metric $g_\bhm$ in~\eqref{EqNMSchw} near spacelike and null infinity.

\begin{definition}[Projections to subbundles]
\label{DefNPProj}
  The projections $\pi^\cC,\pi^\Ups,\slpi_0,\pi_{1 1}\colon S^2\wt T^*M\to S^2\wt T^*M$ respecting the splitting~\eqref{EqNMVbSplit} are defined as in~\eqref{EqLProj}. (In the notation of Definition~\usref{DefNMSphInd}, $\pi^\cC(h)=(h_{0 0},h_{0\bar a},\half\slg^{a b}h_{\bar a\bar b})$, $\pi^\Ups(h)=(h_{0 1},h_{1\bar a})$, $\slpi_0(h)=(h-\half\slg^{a b}h_{\bar a\bar b}\slg)$, and $\pi_{1 1}(h)=(h_{1 1})$.) We set $\pi^{\cC\Ups}:=\pi^\cC+\pi^\Ups$ (mapping $h\mapsto(h_{0 0},h_{0 1},h_{0\bar a},h_{1\bar a},\half\slg^{a b}h_{\bar a\bar b})$).
\end{definition}

\begin{definition}[Metric perturbations]
\label{DefNPMetrics}
  Let $\ell_0,\ell_{\!\scri}\in\R$ with $\ell_{\!\scri}<\min(-\gamma^\Ups,\ell_0,\half)$, and let $k\in\N$, $k\geq 3$. With $\bar x_{\!\scri}$ as in~\eqref{EqNMMfd}, fix $c\in(0,\bar x_{\!\scri})$ and put
  \begin{equation}
  \label{EqNPDomain}
    \Omega = \Bigl\{ x_{\!\scri}<c,\ \rho_0<1+\frac12\rho_{\!\scri}^{\ell_{\!\scri}} \Bigr\}.
  \end{equation}
  Then the space $\tilde\sG^{k,(\ell_0,\ell_{\!\scri})}\subset\Hb^{k,(\ell_0,-1)}(\Omega;S^2\wt T^*M)$ consists of all $h$ for which there exist
  \[
    h_{1 1}^{(0)} \in \Hb^{k,\ell_0}(\Omega\cap\scri^+; \pi_{1 1}S^2\wt T^*M),\qquad
    \slh^{(0)} \in \Hb^{k,\ell_0}(\Omega\cap\scri^+; \slpi_0 S^2\wt T^*M),
  \]
  so that $\pi^{\cC\Ups} h$, $\slpi_0 h - \slh^{(0)}$, $\pi_{1 1}h-h_{1 1}^{(0)} \in \Hb^{k,(\ell_0,2 \ell_{\!\scri})}(\Omega;S^2\wt T^*M)$. The norm on $\tilde\sG^{k,(\ell_0,\ell_{\!\scri})}$ is
  \begin{align*}
    \|h\|_{\tilde\sG^{k,(\ell_0,\ell_{\!\scri})}} &:= \|\slh^{(0)}\|_{\Hb^{k,\ell_0}(\Omega\cap\scri^+)} + \|h_{1 1}^{(0)}\|_{\Hb^{k,\ell_0}(\Omega\cap\scri^+)} \\
      &\quad + \|\slpi_0 h-\slh^{(0)}\|_{\Hb^{k,(\ell_0,2\ell_{\!\scri})}(\Omega)} + \|\pi_{1 1}h-h_{1 1}^{(0)}\|_{\Hb^{k,(\ell_0,2\ell_{\!\scri})}(\Omega)} + \|\pi^{\cC\Ups}h\|_{\Hb^{k,(\ell_0,2\ell_{\!\scri})}(\Omega)}.
  \end{align*}
  We finally define the affine space
  \[
    \sG^{k,(\ell_0,\ell_{\!\scri})} := \{ g_\bhm+r^{-1}h \colon h\in\tilde\sG^{k,(\ell_0,\ell_{\!\scri})} \}.
  \]
\end{definition}

Matching the model~\eqref{EqIModel3} with $\slpi_0 h,\pi_{1 1}h,\pi^\cC h,\pi^\Ups h$ corresponding to $\phi_1,\phi_2,\phi_3,\phi_4$, the trace-free spherical tensor $\slpi_0 h$ has a radiation field, and the leading order term of $\pi_{1 1}h$ at $\scri^+$ is sourced by it. See Remark~\ref{RmkNYBondi} for an interpretation of these terms.

\begin{rmk}
\label{RmkNPMink4}
  Note that all components of $h$, modulo the leading order terms of $\slpi_0 h$ and $\pi_{1 1}h$, have the same decay rates at $\scri^+$. This is a significant simplification of \cite[Definition~3.1]{HintzVasyMink4}, made possible by the absence of logarithmic terms in $\pi_{1 1}h$ due to our new choice of gauge (cf.\ by contrast the logarithmic coupling term $B_{h,1 1}$ in \cite[Equation~(3.26c)]{HintzVasyMink4}).
\end{rmk}

\begin{figure}[!ht]
\centering
\includegraphics{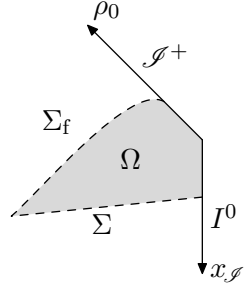}
\caption{The domain $\Omega$ defined in~\eqref{EqNPDomain} inside the manifold $M$ defined in~\eqref{EqNMMfd}. Also shown are the spacelike hypersurfaces $\Sigma,\Sigma_{\rm f}$ from Lemma~\ref{LemmaNPCausal}.  }
\label{FigNP}
\end{figure}

\begin{notation}[Remainder space]
\label{NotNPSpace}
  For $k\in\N_0$, $\alpha,\beta\in\R$, we shall use the abbreviation
  \[
    \cO_k^{\alpha,\beta} := \Hb^{k,(\alpha,2\beta)}(\Omega).
  \]
\end{notation}

The factor of $2$ in the $\scri^+$-weight is included so that $\beta$ measures the decay rate in $\rho_{\!\scri}$, as $\cO_3^{\alpha,\beta}\hra\rho_0^\alpha x_{\!\scri}^{2\beta}L^\infty(\Omega)=\rho_0^\alpha\rho_{\!\scri}^\beta L^\infty(\Omega)$. We shall repeatedly use that for $k\geq 3$,
\begin{alignat*}{3}
  u_1\in\cO_k^{\alpha_1,\beta_1},\ &u_2\in\cO_k^{\alpha_2,\beta_2}&\ &\Longrightarrow\ u_1 u_2\in\cO_k^{\alpha_1+\alpha_2,\beta_1+\beta_2}, \\
    &u_2\in\Hb^{k,\alpha_2}(\scri^+) &\ &\Longrightarrow\ u_1 u_2 \in \cO_k^{\alpha_1+\alpha_2,\beta_1}.
\end{alignat*}

\begin{lemma}[Metric coefficients]
\label{LemmaNPCoeff}
  Let $k\geq 3$ and $h\in\tilde\sG^{k,(\ell_0,\ell_{\!\scri})}$. Suppose $\|h\|_{\tilde\sG^{3,(\ell_0,\ell_{\!\scri})}}$ is sufficiently small. Then $g=g_\bhm+r^{-1}h$ is a Lorentzian metric on $\Omega^\circ$. In the notation of Definition~\usref{DefNMSphInd}, we have
  \begin{alignat*}{3}
    g_{0 0}&\in \cO_k^{1+\ell_0,1+\ell_{\!\scri}}, &\qquad g_{0 1}&\in-\frac12+\bhm r^{-1}+\cO_k^{1+\ell_0,1+\ell_{\!\scri}}, &\qquad g_{0 b}&\in\cO_k^{\ell_0,\ell_{\!\scri}}, \\
    g_{1 1}&=r^{-1}h_{1 1}, &\qquad g_{1 b}&\in\cO_k^{\ell_0,\ell_{\!\scri}}, &\qquad g_{a b}&=r^2\slg_{a b}+r h_{\bar a\bar b},
  \end{alignat*}
  and the coefficients of the dual metric $g^{-1}$ are
  \begin{alignat*}{2}
    g^{0 0} &\in -4 r^{-1}h_{1 1}+\cO_k^{1+\ell_0,1+\ell_{\!\scri}}, &\qquad
    g^{0 1} &\in -2-4\bhm r^{-1}+\cO_k^{1+\ell_0,1+\ell_{\!\scri}}, \\
    g^{0 b} &\in \cO_k^{2+\ell_0,2+\ell_{\!\scri}}, &\qquad
    g^{1 1} &\in \cO_k^{1+\ell_0,1+\ell_{\!\scri}}, \\
    g^{1 b} &\in \cO_k^{2+\ell_0,2+\ell_{\!\scri}}, &\qquad
    g^{a b} &\in r^{-2}\slg^{a b}-r^{-3}h^{\bar a\bar b}+\cO_k^{3+\ell_0,3+\ell_{\!\scri}}.
  \end{alignat*}
  As a symmetric eb-2-tensor, $r^{-1}h$ is a decaying perturbation of $g_\bhm$ (cf.\ Corollary~\usref{CorNMLot}):
  \begin{equation}
  \label{EqNPCoeffeb}
    g - g_\bhm \in \rho_0^{-2}x_{\!\scri}^{-2}\Hb^{k,(1+\ell_0,2\ell_{\!\scri})}(\Omega;S^2\,\Teb^*M).
  \end{equation}
\end{lemma}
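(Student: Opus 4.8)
The plan is to expand $g_\bhm$ in the bundle splitting~\eqref{EqNMVbSplit}, add the perturbation $r^{-1}h$ and read off the coefficients of $g$, then invert the resulting $4\times4$ matrix of coefficients by a Neumann‑series computation to obtain those of $g^{-1}$, and finally to deduce~\eqref{EqNPCoeffeb} and the Lorentzian property from the two coefficient lists together with Lemma~\ref{LemmaNMBundleRel}, Corollary~\ref{CorNMLot}, and the Sobolev embedding~\eqref{EqNDSobEmb}. Two pieces of weight arithmetic are used throughout: multiplication by $r^{-1}=\rho_0\rho_{\!\scri}$ (resp.\ by $r$) is an isomorphism $\cO_k^{\alpha,\beta}\to\cO_k^{\alpha+1,\beta+1}$ (resp.\ $\cO_k^{\alpha-1,\beta-1}$), because a b‑derivative of $r^{\mp1}f$ equals $r^{\mp1}$ times a b‑derivative of $f$ up to a multiple of $r^{\mp1}f$; and, crucially, for $k\geq3$ the spaces $\cO_k^{\cdot,\cdot}$ form an algebra continuously included in $L^\infty$ (as recorded just after Notation~\ref{NotNPSpace}, together with~\eqref{EqNDSobEmb}), which is why the hypothesis $k\geq3$ appears.

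First I would expand $g_\bhm$: by~\eqref{EqNMSchw}, in the splitting~\eqref{EqNMVbSplit} the only nonzero components of $g_\bhm$ are its $2\,\dd x^0\dd x^1$‑component, equal to $-\tfrac12(1-\tfrac{2\bhm}{r})=-\tfrac12+\bhm r^{-1}$, and its $\la r^2\slg\ra$‑component, equal to $1$ (so $(g_\bhm)_{\bar a\bar b}=\slg_{ab}$ and its trace‑free spherical part vanishes). Writing $g=g_\bhm+r^{-1}h$ componentwise and invoking Definition~\ref{DefNPMetrics} — which puts the $\pi^{\cC\Ups}$‑components $h_{00},h_{01},h_{0\bar a},h_{1\bar a},\tfrac12\slg^{ab}h_{\bar a\bar b}$ in $\cO_k^{\ell_0,\ell_{\!\scri}}$ and places $\slpi_0 h$ and $\pi_{1 1}h=h_{1 1}$ in $\Hb^{k,\ell_0}(\Omega\cap\scri^+)$ modulo $\cO_k^{\ell_0,\ell_{\!\scri}}$ — the weight shift by $(1,1)$ coming from the factor $r^{-1}$ yields the asserted list for $g_{\mu\nu}$ in the index convention of Definition~\ref{DefNMSphInd} (e.g.\ $g_{0b}=h_{0\bar b}$, $g_{ab}=r^2\slg_{ab}+r h_{\bar a\bar b}$, $g_{1 1}=r^{-1}h_{1 1}$).

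Next I would invert the metric, separating the coordinate indices into $\{0,1\}$ and the spherical indices $\{a\}$. The leading, block‑diagonal part of the coefficient matrix is $\bigl(\begin{smallmatrix}0&-\tfrac12\\-\tfrac12&0\end{smallmatrix}\bigr)$ on the $\{0,1\}$‑block and $r^2\slg$ on the spherical block, with inverses $\bigl(\begin{smallmatrix}0&-2\\-2&0\end{smallmatrix}\bigr)$ and $r^{-2}\slg^{-1}$; the remaining entries (the $\bhm r^{-1}+\cO_k^{1+\ell_0,1+\ell_{\!\scri}}$‑correction of $g_{01}$, the entries $g_{00},g_{1 1}$ and the correction $r h_{\bar a\bar b}$ of the spherical block, and the off‑diagonal $g_{0b},g_{1b}\in\cO_k^{\ell_0,\ell_{\!\scri}}$) are small relative to it once $\|h\|_{\tilde\sG^{3,(\ell_0,\ell_{\!\scri})}}$ is small, by~\eqref{EqNDSobEmb}. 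Thus the Neumann series for $g^{-1}$ converges in $\cO_k^{\cdot,\cdot}$, and reading off enough of its terms gives the claimed list: expanding $\bigl(r^2(\slg+r^{-1}h)\bigr)^{-1}$ gives $g^{ab}\in r^{-2}\slg^{ab}-r^{-3}h^{\bar a\bar b}+\cO_k^{3+\ell_0,3+\ell_{\!\scri}}$ with $h^{\bar a\bar b}=\slg^{ac}\slg^{bd}h_{\bar c\bar d}$; and using $\det\tilde g=-\tfrac14+\bhm r^{-1}+\cO_k^{1+\ell_0,1+\ell_{\!\scri}}$ for the $\{0,1\}$‑block $\tilde g$ (the coupling to the spherical block contributing only to the error) gives $g^{00}=g_{1 1}/\det\tilde g\in-4r^{-1}h_{1 1}+\cO_k^{1+\ell_0,1+\ell_{\!\scri}}$, $g^{01}=-g_{01}/\det\tilde g\in-2-4\bhm r^{-1}+\cO_k^{1+\ell_0,1+\ell_{\!\scri}}$, $g^{1 1}\in\cO_k^{1+\ell_0,1+\ell_{\!\scri}}$, while $g^{0b},g^{1b}$ are bounded multiples of $g_{\mu a}g^{ab}\in\cO_k^{2+\ell_0,2+\ell_{\!\scri}}$.

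For~\eqref{EqNPCoeffeb} I would check each summand of $S^2\wt T^*M$ in~\eqref{EqNMVbSplit} separately: by Lemma~\ref{LemmaNMBundleRel} its unit basis section lies in $\rho_0^{-2}x_{\!\scri}^{-n}\CI(M;S^2\,\Teb^*M)$ with $n\leq4$ and $n=4$ only for $(\dd x^0)^2$, while the corresponding coefficient of $r^{-1}h$ lies in $\cO_k^{1+\ell_0,1+\ell_{\!\scri}}$ (or, for the components of $h$ carrying a $\scri^+$‑leading term, in $\Hb^{k,(1+\ell_0,2)}$ modulo that space); multiplying and comparing weights, each contribution lands in $\rho_0^{-2}x_{\!\scri}^{-2}\Hb^{k,(1+\ell_0,2\ell_{\!\scri})}(\Omega;S^2\,\Teb^*M)$, the $(\dd x^0)^2$‑summand being the borderline case. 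Combining this with~\eqref{EqNDSobEmb} shows that $\rho_0^2 x_{\!\scri}^2(g-g_\bhm)$ has $L^\infty(\Omega;S^2\,\Teb^*M)$‑norm controlled by $\|h\|_{\tilde\sG^{3,(\ell_0,\ell_{\!\scri})}}$ (the weight $\rho_0^{1+\ell_0}x_{\!\scri}^{2\ell_{\!\scri}}$ being bounded on $\Omega$); since $\rho_0^2 x_{\!\scri}^2 g_\bhm$ is a smooth section of $S^2\,\Teb^*M$ that is nondegenerate of Lorentzian signature down to $\pa M$ (Corollary~\ref{CorNMLot}, Lemma~\ref{LemmaNMSchwMet}), and nondegeneracy and signature are open conditions, $g=g_\bhm+r^{-1}h$ is a Lorentzian metric on $\Omega^\circ$ for $\|h\|_{\tilde\sG^{3,(\ell_0,\ell_{\!\scri})}}$ small. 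The hard part will be the weight bookkeeping in the inversion step — cleanly separating the genuine leading coefficients $-2$, $-4\bhm r^{-1}$, $-4r^{-1}h_{1 1}$, $r^{-2}\slg^{ab}$, $-r^{-3}h^{\bar a\bar b}$ from the remainders, certifying that each remainder lies in the stated $\cO_k^{\cdot,\cdot}$, and confirming that the matrix Neumann series closes at b‑regularity order $k$; the derivation of the $g$‑coefficients and of~\eqref{EqNPCoeffeb} should be routine once the weight arithmetic of Lemma~\ref{LemmaNMBundleRel} is in hand.
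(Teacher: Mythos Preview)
Your proposal is correct and follows essentially the same approach as the paper. The paper organizes the inversion slightly differently---writing $g^{-1}=g_\bhm^{-1}-r^{-1}g_\bhm^{-1}h g_\bhm^{-1}+r^{-2}E(h)$ directly in the bundle $S^2\wt T^*M$ (where $g_\bhm$ is uniformly nondegenerate by Lemma~\ref{LemmaNMSchwMet}) rather than via an explicit $\{0,1\}$/spherical block decomposition, and it deduces the Lorentzian property from smallness in $S^2\wt T^*M$ rather than (as you do) from~\eqref{EqNPCoeffeb} and the eb-picture---but these are cosmetic differences, and your weight bookkeeping is correct.
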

\begin{proof}
  Sobolev embedding~\eqref{EqNDSobEmb} implies the pointwise bound $|h_{\bar\mu\bar\nu}| \leq C\|h\|_{\tilde\sG^{3,(\ell_0,\ell_{\!\scri})}} \rho_0^{\ell_0}$. The Lorentzian nature of $g$ then follows for small $h$ from the nondegenerate Lorentzian nature of $g_\bhm$ as a section of $S^2\wt T^*M$ (see Lemma~\ref{LemmaNMSchwMet}). The expressions for the inverse metric follow from~\eqref{EqNMSchw} by working in the bundle $S^2\wt T^*M$ and writing $g^{-1}=g_\bhm^{-1}-r^{-1}g_\bhm^{-1}h g_\bhm^{-1}+r^{-2}E(h)$, where $E(h)$ vanishes quadratically at $h=0$, so $E(h)\in\cO_k^{2\ell_0,0-}$ since $h\in\cO_k^{\ell_0,0-}$. This gives
  \[
    g^{0 0} \in -r^{-1}g_\bhm^{0 1}h_{1 1}g_\bhm^{0 1} + \cO_k^{2+2\ell_0,2-} \subset -4 r^{-1}h_{1 1} + r^{-2}\CI(M)\cdot\cO_k^{\ell_0,0-} + \cO_k^{2+2\ell_0,2-},
  \]
  similarly for the other coefficients.

  The statement~\eqref{EqNPCoeffeb} follows from~\eqref{EqNMBundleRel}; for instance, this gives
  \begin{align*}
    r^{-1}h_{0 0}(\dd x^0)^2 &\in \cO_k^{1+\ell_0,1+\ell_{\!\scri}} \cdot \rho_0^{-2}\rho_{\!\scri}^{-2}\CI(\Omega;S^2\,\Teb^*M) \\
      &\subset \rho_0^{-2}x_{\!\scri}^{-2}\Hb^{k,(1+\ell_0,2\ell_{\!\scri})}(\Omega;S^2\,\Teb^*M).\qedhere
  \end{align*}
\end{proof}

\begin{lemma}[Causal nature of $\pa\Omega$]
\label{LemmaNPCausal}
  For $h\in\tilde\sG^{3,(\ell_0,\ell_{\!\scri})}$ with sufficiently small norm,
  \[
    \Sigma = \Bigl\{ x_{\!\scri}=c,\ \rho_0<1+\frac12\rho_{\!\scri}^{\ell_{\!\scri}} \Bigr\}\quad \text{and}\quad
    \Sigma_{\rm f} = \Bigl\{ x_{\!\scri}<c,\ \rho_0=1+\frac12\rho_{\!\scri}^{\ell_{\!\scri}} \Bigr\}
  \]
  are spacelike hypersurfaces for $g=g_\bhm+r^{-1}h$.
\end{lemma}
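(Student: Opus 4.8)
The criterion I use is the standard one: for a Lorentzian metric $g$ of signature $(-,+,+,+)$, a level set $\{f=\mathrm{const}\}$ with $\dd f\neq 0$ is spacelike exactly when its conormal is timelike, $g^{-1}(\dd f,\dd f)<0$. I take $f=x_{\!\scri}$ for $\Sigma$ and $f=\rho_0-1-\tfrac12\rho_{\!\scri}^{\ell_{\!\scri}}$ for $\Sigma_{\rm f}$, and in both cases I split $g^{-1}=g_\bhm^{-1}+(g^{-1}-g_\bhm^{-1})$. The Schwarzschild piece is read off from the computations behind Lemma~\ref{LemmaNMSchwCausal} and Corollary~\ref{CorNMLot}. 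The perturbation is controlled uniformly: by~\eqref{EqNPCoeffeb}, Corollary~\ref{CorNMLot}, and a Neumann series, $g^{-1}-g_\bhm^{-1}\in\rho_0^2 x_{\!\scri}^2\Hb^{3,(1+\ell_0,2\ell_{\!\scri})}(\Omega;S^2\,\Teb M)$ with norm $\lesssim\|h\|_{\tilde\sG^{3,(\ell_0,\ell_{\!\scri})}}$ (here $\Hb^3(\Omega)$ is a Banach algebra embedding into $L^\infty(\Omega)$ by~\eqref{EqNDSobEmb}); so once $\dd f\otimes\dd f$ is written as a section of $S^2\,\Teb^*M$ with its weights made explicit, one obtains a pointwise bound of the form $|(g^{-1}-g_\bhm^{-1})(\dd f,\dd f)|\leq C\|h\|_{\tilde\sG^3}\,\rho_0^{a}x_{\!\scri}^{b}$ on $\Omega$.

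For $\Sigma$: Lemma~\ref{LemmaNMSchwCausal}\,(2) already gives that $x_{\!\scri}^{-1}(c)$ is spacelike for $g_\bhm$, and writing $\dd x_{\!\scri}=\tfrac12 x_{\!\scri}^{-1}\dd\rho_{\!\scri}$ in~\eqref{EqNMdxIEst} yields
\[
  g_\bhm^{-1}(\dd x_{\!\scri},\dd x_{\!\scri})=-\tfrac12\rho_0^2 x_{\!\scri}^4\Bigl(1+\tfrac{x^1}{2 r}\bigl(1-\tfrac{2\bhm}{r}\bigr)\Bigr),
\]
the bracketed factor being continuous and bounded below by a positive constant on the compact set $\overline{\Omega}\subset M$. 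Since $\dd x_{\!\scri}\otimes\dd x_{\!\scri}\in x_{\!\scri}^2\CI(M;S^2\,\Teb^*M)$, the perturbation term is $O\!\bigl(\|h\|_{\tilde\sG^3}\,\rho_0^{3+\ell_0}x_{\!\scri}^{4+2\ell_{\!\scri}}\bigr)$, i.e.\ a factor $\rho_0^{1+\ell_0}x_{\!\scri}^{2\ell_{\!\scri}}\lesssim 1$ smaller than the $g_\bhm$-term on $\Omega$; hence $g^{-1}(\dd x_{\!\scri},\dd x_{\!\scri})<0$ once $\|h\|_{\tilde\sG^3}$ is small.

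For $\Sigma_{\rm f}$ the decisive feature is that $\rho_0^{-1}(1)$ is only \emph{lightlike} for $g_\bhm$, so $g_\bhm^{-1}(\dd\rho_0,\dd\rho_0)=0$ and the tilt by $-\tfrac12\rho_{\!\scri}^{\ell_{\!\scri}}$ must make the surface spacelike. With $\dd f=\dd\rho_0-\tfrac{\ell_{\!\scri}}{2}\rho_{\!\scri}^{\ell_{\!\scri}}\tfrac{\dd\rho_{\!\scri}}{\rho_{\!\scri}}$, a direct computation from~\eqref{EqNMSchw} (consistent with Corollary~\ref{CorNMLot}) gives $g_\bhm^{-1}\bigl(\dd\rho_0,\tfrac{\dd\rho_{\!\scri}}{\rho_{\!\scri}}\bigr)=\rho_0^3 x_{\!\scri}^2>0$ and $g_\bhm^{-1}\bigl(\tfrac{\dd\rho_{\!\scri}}{\rho_{\!\scri}},\tfrac{\dd\rho_{\!\scri}}{\rho_{\!\scri}}\bigr)=-2\rho_0^2 x_{\!\scri}^2\bigl(1+\tfrac{x^1}{2 r}(1-\tfrac{2\bhm}{r})\bigr)<0$ on $M^\circ$, so, using $\ell_{\!\scri}>0$,
\[
  g_\bhm^{-1}(\dd f,\dd f)=-\ell_{\!\scri}\rho_0^3 x_{\!\scri}^{2+2\ell_{\!\scri}}-\tfrac{\ell_{\!\scri}^2}{2}\rho_0^2 x_{\!\scri}^{2+4\ell_{\!\scri}}\Bigl(1+\tfrac{x^1}{2 r}(1-\tfrac{2\bhm}{r})\Bigr)\leq-\ell_{\!\scri}\rho_0^3 x_{\!\scri}^{2+2\ell_{\!\scri}}
\]
on $M^\circ$. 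Since $\dd f\otimes\dd f$ has leading part $(\dd\rho_0)^2\in\rho_0^2\CI(M;S^2\,\Teb^*M)$, with the remaining terms carrying extra powers of $x_{\!\scri}$, the perturbation term on $\Sigma_{\rm f}$ — where $1\leq\rho_0<2$ — is $O\!\bigl(\|h\|_{\tilde\sG^3}\,x_{\!\scri}^{2+2\ell_{\!\scri}}\bigr)$, hence strictly dominated by the $g_\bhm$-term once $\|h\|_{\tilde\sG^3}$ is small relative to $\ell_{\!\scri}$; therefore $g^{-1}(\dd f,\dd f)<0$ along $\Sigma_{\rm f}$.

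The main obstacle is precisely this $\Sigma_{\rm f}$ estimate: $\Sigma_{\rm f}$ is asymptotically null at $\scri^+$ (its unperturbed model $\{\rho_0=1\}$ being exactly null), so $g_\bhm^{-1}(\dd f,\dd f)$ degenerates there at the same rate $x_{\!\scri}^{2+2\ell_{\!\scri}}$ at which the perturbation contribution decays; one cannot therefore argue by mere stability of spacelikeness under small perturbations, but must check that the tilt term has the correct \emph{sign} — which is exactly where the positivity $g_\bhm^{-1}(\dd\rho_0,\dd\rho_{\!\scri})>0$ (encoding the orientation of the Schwarzschild light cones) and the hypothesis $\ell_{\!\scri}>0$ enter — and that its coefficient, proportional to $\ell_{\!\scri}$, strictly beats the perturbation. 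The remaining steps are routine weight bookkeeping with~\eqref{EqNMBundleRel}, Lemma~\ref{LemmaNPCoeff}, and~\eqref{EqNDSobEmb}.
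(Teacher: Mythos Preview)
Your proof is correct and follows essentially the same strategy as the paper's own proof: split $g^{-1}=g_\bhm^{-1}+(g^{-1}-g_\bhm^{-1})$, use the explicit computation from Lemma~\ref{LemmaNMSchwCausal} for the Schwarzschild piece, and verify that for $\Sigma_{\rm f}$ the tilt by $-\tfrac12\rho_{\!\scri}^{\ell_{\!\scri}}$ produces a negative term of size $\sim\ell_{\!\scri}\,x_{\!\scri}^{2+2\ell_{\!\scri}}$ which the perturbation (of the same order but with an extra factor $\|h\|_{\tilde\sG^3}$) cannot overcome. The only difference is packaging: the paper writes $\dd f$ explicitly in the $(\dd x^0,\dd x^1)$ frame and reads the perturbation contribution off the component list in Lemma~\ref{LemmaNPCoeff}, whereas you work in the eb-cotangent frame and invoke~\eqref{EqNPCoeffeb} --- both routes give identical weight bookkeeping.
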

\begin{proof}
  We recall from the proof of Lemma~\ref{LemmaNMSchwCausal} that
  \begin{equation}
  \label{EqNPCausala0}
    r\,\dd\rho_{\!\scri}=a_0\,\dd x^0-a_1\,\dd x^1,\qquad
    a_0:=\frac{x^1}{2 r}\Bigl(1-\frac{2\bhm}{r}\Bigr)<-\theta\rho_{\!\scri},\quad a_1:=1+a_0,
  \end{equation}
  for some $\theta>0$; note here that $\tfrac{x^1}{r}=-\rho_{\!\scri}$. The expression~\eqref{EqNMdxIEst} gives an upper bound $g_\bhm^{-1}(r\,\dd\rho_{\!\scri},r\,\dd\rho_{\!\scri})\leq-\theta\rho_{\!\scri}$ with $\theta>0$; since $a_0$ and $a_1$ are bounded, we have
  \[
    \bigl|(g^{-1}-g_\bhm^{-1})(r\,\dd\rho_{\!\scri},r\,\dd\rho_{\!\scri})\bigr| \leq C r^{-1}\|h\|_{\tilde\sG^{3,(\ell_0,\ell_{\!\scri})}} \leq \frac12\theta\rho_{\!\scri}
  \]
  on $\Omega$ for small $h$. Therefore, $\dd x_{\!\scri}$ is (past) timelike for $g$.

  For $\Sigma_{\rm f}$, we compute for the differential of its defining function, using $\rho_0=-(x^1)^{-1}$,
  \begin{align*}
    2\rho_{\!\scri}^{1-\ell_{\!\scri}}r\,\dd\Bigl(\rho_0-\frac12\rho_{\!\scri}^{\ell_{\!\scri}}\Bigr) &= 2\rho_{\!\scri}^{1-\ell_{\!\scri}}\Bigl(\rho_0\rho_{\!\scri}^{-1}\,\dd x^1 - \frac12\ell_{\!\scri}\rho_{\!\scri}^{\ell_{\!\scri}-1}\cdot r\,\dd\rho_{\!\scri}\Bigr) \\
      &= -\ell_{\!\scri} a_0\,\dd x^0 + \bigl(2\rho_0\rho_{\!\scri}^{-\ell_{\!\scri}}+\ell_{\!\scri} a_1\bigr)\,\dd x^1.
  \end{align*}
  The squared length of the 1-form with respect to $g^{-1}$ is
  \[
    (4+\cO(r^{-1}))\ell_{\!\scri} a_0 (2 \rho_0\rho_{\!\scri}^{-\ell_{\!\scri}}+\ell_{\!\scri} a_1) + \cO(r^{-1}) \cdot \bigl( \cO(\rho_{\!\scri}^2) \cO(\rho_0^{\ell_0}) + \cO(\rho_0^2\rho_{\!\scri}^{-2\ell_{\!\scri}}+1)\cO(\rho_0^{\ell_0}\rho_{\!\scri}^{\ell_{\!\scri}})\bigr).
  \]
  For small $h$, the first term is positive, and in view of~\eqref{EqNPCausala0} dominates the second term (collecting the contributions from $h^{0 0}$, $h^{1 1}$) which is of size $\cO(\|h\|_{\tilde\sG^{3,(\ell_0,\ell_{\!\scri})}}\rho_0^{1+\ell_0}\rho_{\!\scri}^{1-\ell_{\!\scri}})$. 
\end{proof}

\begin{lemma}[Connection coefficients]
\label{LemmaNEGamma}
  Let $g\in\sG^{k,(\ell_0,\ell_{\!\scri})}$, with $g-g_\bhm\in\tilde\sG^{3,(\ell_0,\ell_{\!\scri})}$ small. Then the Christoffel symbols of the first kind $\Gamma_{\kappa\mu\nu}=\half(\pa_\mu g_{\nu\kappa}+\pa_\nu g_{\mu\kappa}-\pa_\kappa g_{\mu\nu})$ are\footnote{Since the asymptotics and decay rates of the metric coefficients here are stronger than those in \cite{HintzVasyMink4}, the expressions here and in Corollary~\ref{CorNERiem} can also be read off from those in \cite[\S{A.2}]{HintzVasyMink4}; many terms, due to the stronger metric asymptotics and weaker error spaces here, can be regarded as error terms. Note that our signature convention for $g$ is different from the reference, which causes a number of sign switches.}
  \begin{alignat*}{2}
    \Gamma_{0 0 0}&\in\cO_{k-1}^{2{+}\ell_0,1{+}\ell_{\!\scri}}, &\quad
    \Gamma_{0 0 1}&\in\cO_{k-1}^{2{+}\ell_0,1{+}\ell_{\!\scri}}, \\
    \Gamma_{1 0 0}&\in{-}\half\bhm r^{-2}{+}\cO_{k-1}^{2{+}\ell_0,1{+}\ell_{\!\scri}}, &\quad
    \Gamma_{1 0 1}&\in\cO_{k-1}^{2{+}\ell_0,1{+}\ell_{\!\scri}}, \\
    \Gamma_{c 0 0}&\in\cO_{k-1}^{1{+}\ell_0,\ell_{\!\scri}}, &\quad
    \Gamma_{c 0 1}&\in\cO_{k-1}^{1{+}\ell_0,\ell_{\!\scri}}, \\
    \Gamma_{0 0 b}&\in\cO_{k-1}^{1{+}\ell_0,\ell_{\!\scri}}, &\quad
    \Gamma_{0 1 1}&\in\half\bhm r^{-2}{+}\cO_{k-1}^{2{+}\ell_0,1{+}\ell_{\!\scri}}, \\
    \Gamma_{1 0 b}&\in\cO_{k-1}^{1{+}\ell_0,\ell_{\!\scri}}, &\quad
    \Gamma_{1 1 1}&\in\half r^{-1}\pa_1 h_{1 1}{+}\cO_{k-1}^{2{+}\ell_0,1{+}\ell_{\!\scri}}, \\
    \Gamma_{c 0 b}&\in\half(r{-}2\bhm)\slg_{b c}{+}\cO_{k-1}^{\ell_0,{-}1{+}\ell_{\!\scri}}, &\quad
    \Gamma_{c 1 1}&\in\cO_{k-1}^{1{+}\ell_0,\ell_{\!\scri}}, \\
    \Gamma_{0 1 b}&\in\cO_{k-1}^{1{+}\ell_0,\ell_{\!\scri}}, &\quad
    \Gamma_{0 a b}&\in{-}\half(r{-}2\bhm)\slg_{a b}{+}\cO_{k-1}^{\ell_0,{-}1{+}\ell_{\!\scri}}, \\
    \Gamma_{1 1 b}&\in\cO_{k-1}^{1{+}\ell_0,\ell_{\!\scri}}, &\quad
    \Gamma_{1 a b}&\in\half(r{-}2\bhm)\slg_{a b}{-}\half r\pa_1 h_{\bar a\bar b}{+}\cO_{k-1}^{\ell_0,{-}1{+}\ell_{\!\scri}}, \\
    \Gamma_{c 1 b}&\in{-}\half(r{-}2\bhm)\slg_{b c}{+}\half r\pa_1 h_{\bar b\bar c}{+}\cO_{k-1}^{\ell_0,{-}1{+}\ell_{\!\scri}}, &\qquad \Gamma_{c a b}&\in r^2\slGamma_{c a b}{+}\cO_{k-1}^{{-}1{+}\ell_0,{-}2{+}\ell_{\!\scri}}.
  \end{alignat*}
  The Christoffel symbols of the second kind, $\Gamma^\kappa_{\mu\nu}=g^{\kappa\lambda}\Gamma_{\lambda\mu\nu}$, are
  \begin{alignat*}{2}
    \Gamma^0_{0 0} &\in \bhm r^{-2}{+}\cO_{k-1}^{2{+}\ell_0,1{+}\ell_{\!\scri}}, &\quad
    \Gamma^0_{0 1} &\in \cO_{k-1}^{2{+}\ell_0,1{+}\ell_{\!\scri}}, \\
    \Gamma^1_{0 0} &\in \cO_{k-1}^{2{+}\ell_0,1{+}\ell_{\!\scri}}, &\quad
    \Gamma^1_{0 1} &\in \cO_{k-1}^{2{+}\ell_0,1{+}\ell_{\!\scri}}, \\
    \Gamma^c_{0 0} &\in \cO_{k-1}^{3{+}\ell_0,2{+}\ell_{\!\scri}}, &\quad
    \Gamma^c_{0 1} &\in \cO_{k-1}^{3{+}\ell_0,2{+}\ell_{\!\scri}}, \\
    \Gamma^0_{0 b} &\in \cO_{k-1}^{1{+}\ell_0,\ell_{\!\scri}}, &\quad
    \Gamma^0_{1 1} &\in {-}r^{-1}\pa_1 h_{1 1}{+}\cO_{k-1}^{2{+}\ell_0,1{+}\ell_{\!\scri}}, \\
    \Gamma^1_{0 b} &\in \cO_{k-1}^{1{+}\ell_0,\ell_{\!\scri}}, &\quad
    \Gamma^1_{1 1} &\in {-}\bhm r^{-2}{+}\cO_{k-1}^{2{+}\ell_0,1{+}\ell_{\!\scri}}, \\
    \Gamma^c_{0 b} &\in \half r^{-1}(1{-}\tfrac{2\bhm}{r})\delta_b^c{+}\cO_{k-1}^{2{+}\ell_0,1{+}\ell_{\!\scri}}, &\quad
    \Gamma^c_{1 1}&\in \cO_{k-1}^{3{+}\ell_0,2{+}\ell_{\!\scri}}, \\
    \Gamma^0_{1 b} &\in \cO_{k-1}^{1{+}\ell_0,\ell_{\!\scri}}, &\quad
    \Gamma^0_{a b}&\in {-}r\slg_{a b}{+}r\pa_1 h_{\bar a\bar b}{+}\cO_{k-1}^{\ell_0,{-}1{+}\ell_{\!\scri}}, \\
    \Gamma^1_{1 b}&\in \cO_{k-1}^{1{+}\ell_0,\ell_{\!\scri}}, &\quad
    \Gamma^1_{a b} &\in r\slg_{a b}{+}\cO_{k-1}^{\ell_0,{-}1{+}\ell_{\!\scri}}, \\
    \Gamma^c_{1 b} &\in {-}\half r^{-1}(1{-}\tfrac{2\bhm}{r})\delta_b^c{+}\half r^{-1}\pa_1 h_{\bar b}{}^{\bar c}{+}\cO_{k-1}^{2{+}\ell_0,1{+}\ell_{\!\scri}}, &\qquad
    \Gamma^c_{a b} &\in \slGamma^c_{a b}{+}\cO_{k-1}^{1{+}\ell_0,\ell_{\!\scri}}.
  \end{alignat*}
\end{lemma}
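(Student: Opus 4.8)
The plan is to substitute the metric and inverse-metric coefficients furnished by Lemma~\ref{LemmaNPCoeff} directly into the defining formulas
\[
  \Gamma_{\kappa\mu\nu}=\half\bigl(\pa_\mu g_{\nu\kappa}+\pa_\nu g_{\mu\kappa}-\pa_\kappa g_{\mu\nu}\bigr),\qquad
  \Gamma^\kappa_{\mu\nu}=g^{\kappa\lambda}\Gamma_{\lambda\mu\nu},
\]
and to keep track of the weights of the resulting terms in the scale $\cO_k^{\alpha,\beta}$. (As the footnote records, one could instead read the answer off from \cite[\S A.2]{HintzVasyMink4}, discarding every term which the present, stronger, metric asymptotics send into the error spaces and flipping signs for the opposite signature convention; I follow the direct route here.)

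First I would establish the differentiation rules in the $\cO_k^{\alpha,\beta}$-scale. From~\eqref{EqNMPa01} one has $\pa_0\in\rho_0\rho_{\!\scri}\Veb(M)$ and $\pa_1\in\rho_0\Veb(M)$, while each spherical coordinate vector field $\pa_a$ is itself a b-vector field on $M$ which annihilates $\rho_0$ and $x_{\!\scri}$; together with $\pa_0 r=\half(1-\tfrac{2\bhm}{r})=-\pa_1 r$ and $\pa_a r=0$ from~\eqref{EqNMPa01r}, this yields
\[
  \pa_0\colon\cO_k^{\alpha,\beta}\to\cO_{k-1}^{\alpha+1,\beta+1},\qquad
  \pa_1\colon\cO_k^{\alpha,\beta}\to\cO_{k-1}^{\alpha+1,\beta},
\]
with $\pa_a$ preserving $\cO_k^{\alpha,\beta}$ up to a loss of one order. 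The crucial asymmetry is that, when applied to the $x_{\!\scri}$-independent extensions of the leading parts $h_{1 1}^{(0)},\slh^{(0)}\in\Hb^k(\Omega\cap\scri^+)$ (which do not decay at $\scri^+$), the field $\pa_0$ produces terms with strictly better $\rho_{\!\scri}$-decay, whereas $\pa_1$ gives a gain of one power of $\rho_0$ but \emph{no} $\rho_{\!\scri}$-gain, and $\pa_a$ again stays within the $\Hb^k(\scri^+)$-structure. Finally $r^{-1}=\rho_0 x_{\!\scri}^2=\rho_0\rho_{\!\scri}\in\CI(M)$, so multiplication by $r^{\pm1}$ shifts both weights by $\pm1$; with these rules fixed, every computation below is mechanical.

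Next I would split $g=g_\bhm+r^{-1}h$ and handle the two pieces in turn. For $g_\bhm$, whose only nonzero blocks in the splitting~\eqref{EqNMVbSplit} are $g_{\bhm,0 1}=-\half(1-\tfrac{2\bhm}{r})$ and the spherical $r^2\slg$, the nonzero Schwarzschild symbols of the first kind come from $\pa_{0},\pa_1$ hitting $\bhm r^{-1}$ (the $\pm\half\bhm r^{-2}$ terms), from $\pa_0(r^2)=-\pa_1(r^2)=r(1-\tfrac{2\bhm}{r})$ multiplying $\slg_{a b}$ (the $\pm\half(r-2\bhm)\slg_{a b}$ terms), and from the intrinsic symbols $r^2\slGamma_{c a b}$; contracting with $g_\bhm^{-1}$---dominated by $g_\bhm^{0 1}=-2-4\bhm r^{-1}+\cdots$ and $g_\bhm^{a b}=r^{-2}\slg^{a b}+\cdots$---gives the Schwarzschild parts of the second-kind symbols, with some cancellation between the $\bhm r^{-1}$-corrections that must be tracked (e.g.\ in $\Gamma^0_{a b}$). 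For $r^{-1}h$ I would differentiate each (rescaled) component using the rules above: generically the prefactor $r^{-1}$ together with the $\pa_0$- or $\pa_1$-gain---or, where a factor of $r$ from the spherical block $r\,h_{\bar a\bar b}$ survives uncancelled, the interplay of $r$ and $r^{-1}$---puts the result in the stated error space. The only exceptions are the terms in which $\pa_1$ lands on $h_{1 1}$ or on the spherical block $h_{\bar a\bar b}$: since the leading parts $h_{1 1}^{(0)},\slh^{(0)}$ do not decay, $\pa_1$ supplies no $\rho_{\!\scri}$-gain and these sit at the borderline of the error space, hence are recorded in full---$\half r^{-1}\pa_1 h_{1 1}$ in $\Gamma_{1 1 1}$, $-\half r\,\pa_1 h_{\bar a\bar b}$ in $\Gamma_{1 a b}$, $\half r\,\pa_1 h_{\bar b\bar c}$ in $\Gamma_{c 1 b}$, and, after contraction with $g^{-1}$ (where the off-diagonal coefficient $g^{0 0}\in-4 r^{-1}h_{1 1}+\cdots$ also enters), the corresponding main terms in $\Gamma^0_{1 1}$, $\Gamma^0_{a b}$, $\Gamma^c_{1 b}$.

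The only real obstacle is the volume of the bookkeeping: roughly three dozen symbols, each requiring the $\cO_k^{\alpha,\beta}$-weight arithmetic to be carried through, and the one delicate point is the consistent identification of the borderline $\pa_1$-of-leading-term contributions---getting their coefficients, signs and spherical rescalings right---together with the verification that all remaining pieces of $r^{-1}h$, as well as the cancellations among Schwarzschild subleading terms that occur when raising indices, genuinely land in the stated error spaces. There is no conceptual difficulty, and a cross-check against \cite[\S A.2]{HintzVasyMink4} (modulo the signature caveat) is a convenient safeguard.
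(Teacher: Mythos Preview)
Your proposal is correct and follows exactly the paper's own approach: the paper's proof is the single line ``Direct computation using Lemma~\ref{LemmaNPCoeff} and equation~\eqref{EqNMPa01r},'' and what you have written is a faithful unpacking of that computation, including the key differentiation rules $\pa_0\colon\cO_k^{\alpha,\beta}\to\cO_{k-1}^{\alpha+1,\beta+1}$, $\pa_1\colon\cO_k^{\alpha,\beta}\to\cO_{k-1}^{\alpha+1,\beta}$ and the identification of the borderline $\pa_1 h_{1 1}$, $\pa_1 h_{\bar a\bar b}$ terms.
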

\begin{proof}
  Direct computation using Lemma~\ref{LemmaNPCoeff} and equation~\eqref{EqNMPa01r}.
\end{proof}

\begin{cor}[Curvature coefficients]
\label{CorNERiem}
  Let $g=g_\bhm+r^{-1}h\in\sG^{k,(\ell_0,\ell_{\!\scri})}$, $k\geq 4$, with $\|h\|_{\tilde\sG^{3,(\ell_0,\ell_{\!\scri})}}$ small. Define the Riemann curvature tensor by $R^\kappa{}_{\lambda\mu\nu}=\pa_\mu\Gamma^\kappa_{\lambda\nu}-\pa_\nu\Gamma^\kappa_{\lambda\mu}+\Gamma^\kappa_{\mu\rho}\Gamma^\rho_{\lambda\nu}-\Gamma^\kappa_{\nu\rho}\Gamma^\rho_{\lambda\mu}$. Use the notation from Definition~\usref{DefNMSphInd}. Then, modulo $r^{-3}\CI+\cO_{k-2}^{3+\ell_0,1+\ell_{\!\scri}}$,
  \begin{alignat*}{2}
    R^0{}_{\bar b 1\bar d} &\equiv r^{-1}\pa_1^2 h_{\bar b\bar d}, &\qquad
      R^{\bar a}{}_{1 1\bar d} &\equiv \half r^{-1}\pa_1^2 h_{\bar d}{}^{\bar a},
  \end{alignat*}
  while $R^{\bar\kappa}{}_{\bar\lambda\bar\mu\bar\nu}\equiv 0$ for all other $\kappa,\lambda,\mu,\nu$ with $\mu<\nu$; and $R^{\bar\kappa}{}_{\bar\lambda\bar\mu\bar\nu}=-R^{\bar\kappa}{}_{\bar\lambda\bar\nu\bar\mu}$. The Ricci tensor $\Ric(g)_{\bar\lambda\bar\nu}=R^{\bar\kappa}{}_{\bar\lambda\bar\kappa\bar\nu}$ satisfies $\Ric(g)\in\cO_{k-2}^{3+\ell_0,1+\ell_{\!\scri}}$.
\end{cor}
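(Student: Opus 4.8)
The plan is to compute directly from $R^\kappa{}_{\lambda\mu\nu}=\pa_\mu\Gamma^\kappa_{\lambda\nu}-\pa_\nu\Gamma^\kappa_{\lambda\mu}+\Gamma^\kappa_{\mu\rho}\Gamma^\rho_{\lambda\nu}-\Gamma^\kappa_{\nu\rho}\Gamma^\rho_{\lambda\mu}$, substituting the Christoffel symbols of the second kind from Lemma~\ref{LemmaNEGamma}, and to track weights using the calculus of the spaces $\cO_k^{\alpha,\beta}=\Hb^{k,(\alpha,2\beta)}(\Omega)$ together with the algebra property $\cO_k^{\alpha,\beta}\cdot\cO_k^{\alpha',\beta'}\subset\cO_k^{\alpha+\alpha',\beta+\beta'}$ for $k\geq 3$. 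The one extra input is the behavior of the coordinate derivatives: by~\eqref{EqNMPa01} one has $\pa_1\in\rho_0\Veb(M)$, so $\pa_1\colon\cO_k^{\alpha,\beta}\to\cO_{k-1}^{\alpha+1,\beta}$, while $\pa_0\in\rho_0\rho_{\!\scri}\Veb(M)$, so $\pa_0\colon\cO_k^{\alpha,\beta}\to\cO_{k-1}^{\alpha+1,\beta+1}$; and a spherical coordinate derivative $\pa_{x^a}$, though only in $x_{\!\scri}^{-1}\Veb(M)$, comes with the compensating factor $r^{-1}=\rho_0\rho_{\!\scri}$ that the barred-index convention of Definition~\ref{DefNMSphInd} attaches to the new lower index $\bar a$, and so also gains a positive power of $\rho_{\!\scri}$. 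I would also record at the outset that $g_\bhm$ is Ricci-flat and that, in the barred convention, all components of its Riemann tensor lie in $r^{-3}\CI(M)$; thus the part of any $R^{\bar\kappa}{}_{\bar\lambda\bar\mu\bar\nu}$ built purely from Schwarzschild Christoffel symbols is automatically in the stated error space $r^{-3}\CI+\cO_{k-2}^{3+\ell_0,1+\ell_{\!\scri}}$ and contributes nothing to the Ricci tensor.

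It then suffices to analyze the $h$-dependent part of $R(g)$, i.e.\ the linear and quadratic terms in $r^{-1}h$. Scanning Lemma~\ref{LemmaNEGamma}, the $h$-contributions to the second-kind Christoffels with the weakest decay all carry a single $\pa_1$: precisely $\Gamma^0_{ab}\ni r\pa_1 h_{\bar a\bar b}$, $\Gamma^c_{1b}\ni\half r^{-1}\pa_1 h_{\bar b}{}^{\bar c}$, and $\Gamma^0_{1 1}\ni-r^{-1}\pa_1 h_{1 1}$, every other $h$-term decaying at least one weight order faster. Applying a further $\pa_1$ to the term $r\pa_1 h_{\bar b\bar d}$ inside $\Gamma^0_{bd}$ (which enters $R^0{}_{b1d}=\pa_1\Gamma^0_{bd}-\pa_d\Gamma^0_{b1}+\cdots$) and passing to barred indices produces exactly the asserted leading term $r^{-1}\pa_1^2 h_{\bar b\bar d}$ of $R^0{}_{\bar b1\bar d}$; applying $\pa_1$ to the term $\half r^{-1}\pa_1 h_{\bar d}{}^{\bar a}$ inside $\Gamma^a_{1d}$ produces the asserted term $\half r^{-1}\pa_1^2 h_{\bar d}{}^{\bar a}$ of $R^{\bar a}{}_{1 1\bar d}$. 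Every other way of producing an $h$-dependent contribution is an error term: applying $\pa_0$ or a spherical derivative to an $h$-Christoffel, or differentiating the faster-decaying $\cO_{k-1}$-pieces, gains a further power of $\rho_{\!\scri}$ and (with the curvature's own $r^{-1}=\rho_0\rho_{\!\scri}$) lands in $\cO_{k-2}^{3+\ell_0,1+\ell_{\!\scri}}$; differentiating the $r$-prefactor instead of $h$ produces only a first derivative of $h$, again one weight order better; and any quadratic product containing an $h$-factor is estimated by multiplying the tabulated weights. A point deserving care is that the individually largest quadratic products — a size-$r$ Schwarzschild symbol such as $\Gamma^0_{ac}\sim-r\slg_{ac}$ times the $\half r^{-1}\pa_1 h$ part of a spherical $\Gamma^c_{\lambda\nu}$ — are not a priori error terms, but they occur antisymmetrized in the pair of differentiation indices of the curvature tensor and cancel in pairs, by the same mechanism that keeps the Schwarzschild curvature $O(r^{-3})$. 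This yields the full list, including the vanishing modulo errors of all components not of the two exceptional forms. (Alternatively, all of this may be extracted from the corresponding but more elaborate computation in \cite[\S A.2]{HintzVasyMink4}, most of whose terms become error terms here in view of the stronger metric asymptotics.)

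For the Ricci tensor $\Ric(g)_{\bar\lambda\bar\nu}=R^{\bar\kappa}{}_{\bar\lambda\bar\kappa\bar\nu}$ one contracts. The $r^{-3}\CI$ parts assemble into $\Ric(g_\bhm)=0$ and drop out, and the $\cO_{k-2}^{3+\ell_0,1+\ell_{\!\scri}}$ remainders contract, with no loss of weight in the barred convention, into $\cO_{k-2}^{3+\ell_0,1+\ell_{\!\scri}}$. It remains only to check that the two exceptional curvature terms do not enter: contracting $R^0{}_{\bar b1\bar d}$ (or its antisymmetrization) into $\Ric$ would force the upper index $0$ to equal the third lower index, which is $1$; and contracting $R^{\bar a}{}_{1 1\bar d}$ via $-R^{\bar a}{}_{1\bar d1}$ produces $-\sum_a R^{\bar a}{}_{1 1\bar a}\equiv-\half r^{-1}\pa_1^2(\slg^{ab}h_{\bar a\bar b})$, and since $\slg^{ab}h_{\bar a\bar b}$ is (twice) a $\pi^\cC$-component of $h$ it lies in $\cO_k^{\ell_0,\ell_{\!\scri}}$ by Definition~\ref{DefNPMetrics}, so $r^{-1}\pa_1^2(\slg^{ab}h_{\bar a\bar b})\in\rho_0\rho_{\!\scri}\cdot\cO_{k-2}^{2+\ell_0,\ell_{\!\scri}}=\cO_{k-2}^{3+\ell_0,1+\ell_{\!\scri}}$. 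Hence $\Ric(g)\in\cO_{k-2}^{3+\ell_0,1+\ell_{\!\scri}}$.

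The main obstacle is organizational rather than conceptual: one must run the full case analysis over index combinations in $R^\kappa{}_{\lambda\mu\nu}$ while keeping the barred-index weight bookkeeping straight, and one must correctly identify the near-borderline contributions — in particular the $O(1)$ term $\pa_1(-r\slg_{bd})$ from $\pa_1\Gamma^0_{bd}$, of barred size $O(r^{-2})$, which must (and does) cancel against the quadratic Schwarzschild terms — as belonging to $R(g_\bhm)$ rather than genuinely surviving. Once the reduction to $R(g)-R(g_\bhm)$ is made and the $\pa_1 h$-carrying Christoffels are singled out, no individual estimate goes beyond multiplying the tabulated weights.
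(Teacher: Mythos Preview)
The proposal is correct and follows the same approach as the paper, namely direct computation from Lemma~\ref{LemmaNEGamma}; the paper's own proof consists essentially of the two words ``Direct computation'' together with the observation that the $r^{-3}\CI$ contribution to $\Ric(g)$ comes from $g_\bhm$ and hence vanishes. Your write-up is in fact more careful than the paper's on one point: you explicitly verify that the two exceptional Riemann components do not spoil the Ricci estimate (the first because the contracted index pattern never matches, the second because the resulting spherical trace $\slg^{a b}h_{\bar a\bar b}$ lies in $\cO_k^{\ell_0,\ell_{\!\scri}}$ by the definition of $\tilde\sG^{k,(\ell_0,\ell_{\!\scri})}$), whereas the paper leaves this implicit.
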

\begin{proof}
  Direct computation. The stated membership of $R^{\bar\kappa}{}_{\bar\lambda\bar\mu\bar\nu}$ gives $\Ric(g)\in r^{-3}\CI+\cO_{k-2}^{3+\ell_0,1+\ell_{\!\scri}}$, with the $r^{-3}\CI$ term coming from $g_\bhm$ which satisfies $\Ric(g_\bhm)=0$.
\end{proof}

\subsection{Gauge-fixed Einstein operator} 
\label{SsNE}

Encouraged by the calculations in~\S\ref{SL}, we now define the nonlinear gauge-fixed Einstein operator whose linearization will be shown to have the main properties of $L_{\ubar g,E^\cC,E^\Ups}$ discussed after~\eqref{EqLOp}.

\begin{definition}[Nonlinear modified gauge-fixed Einstein operator]
\label{DefNEOp}
  Set $\cd^\cC=\cd^\Ups:=r^{-1}\,\dd t=\half r^{-1}(\dd x^0+\dd x^1)$ as in~\eqref{EqLMod}, and choose $\gamma^\cC\in(0,1)$, $\gamma^\Ups\in(-1,0)$ with $-\gamma^\Ups<\gamma^\cC$. Write $E^\bullet=(\cd^\bullet,\gamma^\bullet)$, $\bullet=\cC,\Ups$, and define $\delta_{g,E^\cC}^*,\delta_{g,E^\Ups}$ by~\eqref{EqLModCD}--\eqref{EqLModGC}. Given a Lorentzian metric $g$, and denoting by $g_\bhm$ the Schwarzschild metric from Definition~\ref{DefNMSchw}, put
  \[
    \Ups_{E^\Ups}(g;g_\bhm) := \Ups(g;g_\bhm) - (\delta_{g_\bhm,E^\Ups}-\delta_{g_\bhm})\sfG_{g_\bhm}(g-g_\bhm),
  \]
  where $\Ups(g;g_\bhm)=g(g_\bhm)^{-1}\delta_g\sfG_g g_\bhm$ as in~\eqref{EqLPUps}. We then define\footnote{The definition of $P'_{g,E^\cC,E^\Ups}$ is consistent with the motivational Definition~\ref{DefLMod} for $g=g_\bhm$, as follows from a brief calculation using Lemma~\ref{LemmaLPLin}.}
  \begin{align*}
    &P_{E^\cC,E^\Ups}(g) := \Ric(g) - \delta_{g_\bhm,E^\cC}^*\Ups_{E^\Ups}(g;g_\bhm), \\
    &\qquad P'_{g,E^\cC,E^\Ups} := D_g P_{E^\cC,E^\Ups},\qquad
      L_{g,E^\cC,E^\Ups} := 2\rho_{\!\scri}\rho^{-3}P'_{g,E^\cC,E^\Ups}\rho.
  \end{align*}
\end{definition}

\begin{lemma}[Gauge 1-form]
\label{LemmaNEUps}
  For $g$ as in Lemma~\usref{LemmaNEGamma}, we have $\Ups_{E^\Ups}(g;g_\bhm)\in\cO_{k-1}^{2+\ell_0,1+\ell_{\!\scri}}$.
\end{lemma}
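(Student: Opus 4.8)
Write $u:=g-g_\bhm=r^{-1}h$ and split $\Ups_{E^\Ups}(g;g_\bhm)=\Ups(g;g_\bhm)-(\delta_{g_\bhm,E^\Ups}-\delta_{g_\bhm})\sfG_{g_\bhm}u$; the plan is to show each summand lies in $\cO_{k-1}^{2+\ell_0,1+\ell_{\!\scri}}$. The zeroth order correction is the easy piece: by~\eqref{EqLModGC} it equals $\gamma^\Ups\bigl(2\iota_{g_\bhm^{-1}(\cd^\Ups)}-\cd^\Ups\tr_{g_\bhm}\bigr)\sfG_{g_\bhm}u$, which involves no derivatives of $u$ and carries the extra factor $r^{-1}\in\rho_0\rho_{\!\scri}\CI(M)$ coming from $\cd^\Ups=r^{-1}\,\dd t$. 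Since $g_\bhm^{-1}$ pairs $\dd x^0$ with $\dd x^1$ and rescales the spherical block by $r^{-2}$, the $g_\bhm$-trace of $u$ only sees the $\pi^\Ups$- and $\pi^\cC$-trace components of $h$, so $\tr_{g_\bhm}u\in r^{-1}\cO_k^{\ell_0,\ell_{\!\scri}}=\cO_k^{1+\ell_0,1+\ell_{\!\scri}}$; inserting the coefficients of $u$ and $g_\bhm^{\pm1}$ from Lemma~\ref{LemmaNPCoeff} and~\eqref{EqNMSchw} and keeping track of the index rescaling of Definition~\ref{DefNMSphInd} then gives the claim — the only component of $u$ that fails to decay at $\scri^+$, namely $u_{1 1}=r^{-1}h_{1 1}$, enters only through the extra $r^{-1}$ and so still contributes to $\cO_k^{2+\ell_0,2}\subset\cO_k^{2+\ell_0,1+\ell_{\!\scri}}$ (using $\ell_{\!\scri}<1$).

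For the gauge 1-form I would work from the formula $\Ups(g;g_\bhm)_\mu=g_{\mu\nu}g^{\kappa\lambda}C^\nu_{\kappa\lambda}$ of~\eqref{EqLPLinUps}, $C^\nu_{\kappa\lambda}:=\Gamma(g)^\nu_{\kappa\lambda}-\Gamma(g_\bhm)^\nu_{\kappa\lambda}$, and substitute Lemmas~\ref{LemmaNPCoeff} and~\ref{LemmaNEGamma}, using that $\Gamma(g_\bhm)$ is obtained by setting $h=0$ in Lemma~\ref{LemmaNEGamma}; thus the `Schwarzschild' leading terms cancel in $C^\nu_{\kappa\lambda}$ (this is $\Ups(g_\bhm;g_\bhm)=0$). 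One is then left with: the blocks $C^\nu_{0 0},C^\nu_{0 1},C^\nu_{1 1},C^c_{0 b}$ in $\cO_{k-1}^{2+\ell_0,1+\ell_{\!\scri}}$ or better; the `slow' blocks $C^{\nu}_{0 b},C^{\nu}_{1 b}$ ($\nu\in\{0,1\}$), $C^{c}_{a b},C^{1}_{a b}$ in $\cO_{k-1}^{1+\ell_0,\ell_{\!\scri}}$, resp.\ $\cO_{k-1}^{\ell_0,-1+\ell_{\!\scri}}$; and the block $C^{0}_{a b}=r\pa_1 h_{\bar a\bar b}+\cO_{k-1}^{\ell_0,-1+\ell_{\!\scri}}$ together with the explicit leading terms $\tfrac12 r^{-1}\pa_1 h_{\bar b}{}^{\bar c}$ in $C^{c}_{1 b}$ and $-r^{-1}\pa_1 h_{1 1}$ in $C^{0}_{1 1}$, which carry the radiation-field parts $\slpi_0 h,\pi_{1 1}h$. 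In the contraction $g_{\mu\nu}g^{\kappa\lambda}C^\nu_{\kappa\lambda}$ the dangerous terms are controlled because: (i)~each slow block has a spherical lower index and is therefore paired with $g^{0 b},g^{1 b}\in\cO_k^{2+\ell_0,2+\ell_{\!\scri}}$, or with $g^{a b}=r^{-2}\slg^{a b}+\cdots$ whose $r^{-2}$-prefactor supplies a factor in $\rho_0^2\rho_{\!\scri}^2\CI(M)$; (ii)~the $r\pa_1 h_{\bar a\bar b}$-term of $C^{0}_{a b}$, once contracted against $r^{-2}\slg^{a b}$, collapses to $r^{-1}\pa_1(\tr_{\slg}h)$ (since $\pa_1\slg^{a b}=0$), where the trace-free part of $h_{\bar a\bar b}$ is annihilated, $\tr_{\slg}h\in\cO_k^{\ell_0,\ell_{\!\scri}}$ is a $\pi^\cC$-component, and $\pa_1\in\rho_0\Veb(M)$ by~\eqref{EqNMPa01}, giving a term in $\rho_0\rho_{\!\scri}\cdot\rho_0\cO_{k-1}^{\ell_0,\ell_{\!\scri}}=\cO_{k-1}^{2+\ell_0,1+\ell_{\!\scri}}$; likewise $r^{-1}\pa_1 h_{1 1}$ appears only against $g^{1 1}\in\cO_k^{1+\ell_0,1+\ell_{\!\scri}}$ and $r^{-1}\pa_1 h_{\bar b}{}^{\bar c}$ only against the small $g^{1 b}$; (iii)~the good blocks are contracted with the bounded coefficients of $g$ and $g^{-1}$ (smooth on $M$, plus $\cO_k$ corrections). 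Summing all contributions and un-rescaling the spherical component of $\Ups$ — which costs a further factor $r^{-1}\in\rho_0\rho_{\!\scri}\CI(M)$, exactly absorbing the slowest surviving term — then yields $\Ups(g;g_\bhm)_{\bar\mu}\in\cO_{k-1}^{2+\ell_0,1+\ell_{\!\scri}}$ for $\mu=0,1,\bar a$.

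The proof is thus a direct if lengthy computation; the main obstacle is the bookkeeping, not any single estimate. One must consistently track the rescaling convention of Definition~\ref{DefNMSphInd} (an ordinary lower spherical index inflates a coefficient by $r$, an upper one shrinks it by $r$) and — the structural heart of the matter — verify that every occurrence of the non-$\scri^+$-decaying components $\slpi_0 h$, $\pi_{1 1}h$ in $\Ups(g;g_\bhm)$ is either annihilated upon contraction with $\slg$ (because it sits inside a trace) or carries a compensating power of $r^{-1}=\rho_0\rho_{\!\scri}$; this is precisely why the space $\tilde\sG^{k,(\ell_0,\ell_{\!\scri})}$ of Definition~\ref{DefNPMetrics} imposes fast decay on $\pi^{\cC\Ups}h$ while permitting radiation fields for $\slpi_0 h,\pi_{1 1}h$. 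Alternatively, since the metric and connection asymptotics here are stronger than those of~\cite{HintzVasyMink4}, the statement can also be read off from the computation of the gauge 1-form in~\cite[\S3.3]{HintzVasyMink4}, treating the weaker terms there as errors.
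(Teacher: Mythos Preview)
Your proposal is correct and follows essentially the same approach as the paper: split into $\Ups(g;g_\bhm)$ and the zeroth-order gauge modification, handle the latter via the explicit $r^{-1}$ in $\cd^\Ups$ together with $g-g_\bhm\in\cO_k^{1+\ell_0,1-}$, and read off the former from the Christoffel asymptotics of Lemma~\ref{LemmaNEGamma}. The paper's proof is much terser---it simply says the result ``can be read off'' from Lemma~\ref{LemmaNEGamma} after lowering the index---whereas you spell out explicitly why the radiation-field components $\slpi_0 h,\pi_{1 1}h$ do not spoil the estimate (trace annihilation in~(ii), pairing with small inverse-metric coefficients in~(iii)); this is exactly the content hidden behind the paper's one-line appeal.
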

\begin{proof}
  We have $\Ups(g;g_\bhm)^\mu=g^{\kappa\lambda}(\Gamma(g)_{\kappa\lambda}^\mu-\Gamma(g_\bhm)_{\kappa\lambda}^\mu)$; lowering the index using $g$ gives $\Ups(g;g_\bhm)_0\equiv-\half\Ups(g;g_\bhm)^1$ and $\Ups(g;g_\bhm)_1\equiv-\half\Ups(g;g_\bhm)^0$ modulo $\cO_{k-1}^{2+\ell_0,1+\ell_{\!\scri}}$. For $E^\Ups=(0,0,0)$, the result can now be read off from Lemma~\ref{LemmaNEGamma}. Likewise,
  \[
    \Ups_{E^\Ups}(g;g_\bhm)-\Ups(g;g_\bhm) = -(\delta_{g_\bhm,E^\Ups}-\delta_{g_\bhm})\sfG_{g_\bhm}(g-g_\bhm) \in \cO_{k-1}^{2+\ell_0,1+\ell_{\!\scri}}
  \]
   since $\delta_{g_\bhm,E^\Ups}-\delta_{g_\bhm}\in r^{-1}\CI(M;\Hom(S^2\wt T^*M,\wt T^*M))$ and $g-g_\bhm\in\cO_k^{1+\ell_0,1-}$.
\end{proof}

\begin{prop}[Structure of the linearized gauge-fixed Einstein operator]
\label{PropNELin}
  Write symmetric scattering 2-tensors in the splitting~\eqref{EqNMVbSplit}. Let $g=g_\bhm+r^{-1}h\in\sG^{k,(\ell_0,\ell_{\!\scri})}$, $k\geq 4$, with $\|h\|_{\tilde\sG^{3,(\ell_0,\ell_{\!\scri})}}$ small. Then the operator $L_{g,E^\cC,E^\Ups}$ from Definition~\usref{DefNEOp} takes the form
  \begin{subequations}
  \begin{align}
    &L_{g,E^\cC,E^\Ups} = L_{g,E^\cC,E^\Ups}^0 + \tilde L_{g,E^\cC,E^\Ups}, \nonumber\\
  \label{EqNELinMain}
    &\qquad L_{g,E^\cC,E^\Ups}^0 = -2(\rho_{\!\scri}\pa_{\rho_{\!\scri}}-A_{g,E^\cC,E^\Ups})(\rho_0\pa_{\rho_0}-\rho_{\!\scri}\pa_{\rho_{\!\scri}}) + x_{\!\scri}^2\slDelta + 2 B_{g,E^\cC,E^\Ups}, \\
  \label{EqNELinError}
    &\qquad \tilde L_{g,E^\cC,E^\Ups} \in \bigl(x_{\!\scri}\CI(\Omega) + \Hb^{k-2,(\ell_0,2\ell_{\!\scri})}(\Omega)\bigr)\Diffeb^2(M;S^2\wt T^*M),
  \end{align}
  \end{subequations}
  where the endomorphisms $A_{g,E^\cC,E^\Ups}$ and $B_{g,E^\cC,E^\Ups}$ of $S^2\wt T^*M$ are defined by
  \begin{align*}
    A_{g,E^\cC,E^\Ups} &=
      \openbigpmatrix{1pt}
        2\gamma^\cC & 0 & 0 & 0 & 0 & 0 & 0 \\
        -\gamma^\Ups & -\gamma^\Ups & 0 & 0 & 0 & 0 & 0 \\
        0 & 0 & \gamma^\cC & 0 & 0 & 0 & 0 \\
        0 & -2\gamma^\Ups & 0 & -2\gamma^\Ups & 0 & \gamma^\cC & -\half\pa_1 h^{\bar a\bar b} \\
        0 & 0 & \gamma^\cC{-}\gamma^\Ups & 0 & -\gamma^\Ups & 0 & 0 \\
        2\gamma^\cC & 0 & 0 & 0 & 0 & \gamma^\cC & 0 \\
        2\pa_1 h_{\bar a\bar b} & 0 & 0 & 0 & 0 & 0 & 0
      \closebigpmatrix, \\
    B_{g,E^\cC,E^\Ups} &=
      \begin{pmatrix}
        0 & 0 & 0 & 0 & 0 & 0 & 0 \\
        0 & 0 & 0 & 0 & 0 & 0 & 0 \\
        0 & 0 & 0 & 0 & 0 & 0 & 0 \\
        2\rho_0^{-1}\pa_1^2 h_{1 1} & 0 & 0 & 0 & 0 & 0 & 0 \\
        0 & 0 & 0 & 0 & 0 & 0 & 0 \\
        0 & 0 & 0 & 0 & 0 & 0 & 0 \\
        2\rho_0^{-1}\pa_1^2 h_{\bar a\bar b} & 0 & 0 & 0 & 0 & 0 & 0
      \end{pmatrix}.
  \end{align*}
\end{prop}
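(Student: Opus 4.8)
The plan is to assemble $P'_{g,E^\cC,E^\Ups}$ from its constituents and then conjugate by $\rho$. Using Lemma~\ref{LemmaLPLin} and Definition~\ref{DefNEOp}, I first expand
\begin{align*}
  P'_{g,E^\cC,E^\Ups} &= \half\Box_g + \sR_g + (\delta_{g_\bhm,E^\cC}^*-\delta_g^*)\delta_g\sfG_g \\
    &\quad + \delta_{g_\bhm,E^\cC}^*(\delta_{g_\bhm,E^\Ups}-\delta_{g_\bhm})\sfG_{g_\bhm} + \delta_{g_\bhm,E^\cC}^*\sC_g - \delta_{g_\bhm,E^\cC}^*\sY_g ,
\end{align*}
so that $L_{g,E^\cC,E^\Ups}=2\rho_{\!\scri}\rho^{-3}P'_{g,E^\cC,E^\Ups}\rho$ decomposes into six pieces. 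Throughout I work in the splitting~\eqref{EqNMVbSplit}, using $\rho=r^{-1}$, $\rho_{\!\scri}r=\rho_0^{-1}$, and the fact that $\rho_0\le 2$ on $\Omega\subset M$ (so any coefficient carrying a positive power of $x_{\!\scri}$ lies in $x_{\!\scri}\CI(\Omega)$). The remainder $\tilde L_{g,E^\cC,E^\Ups}$ will collect (i) everything with an extra factor of $x_{\!\scri}$ — the $\bhm/r$ Schwarzschild corrections, the tensorial-vs-scalar discrepancies of $\Box_{g_\bhm}$ on $S^2\wt T^*M$, the Schwarzschild curvature $\sR_{g_\bhm}\in\cO(r^{-3})$, the $\Ric(g)$-part of $\sR_g$, and the $\sY_g$-term (small by Lemma~\ref{LemmaNEUps}) — and (ii) everything with an $h$-built coefficient of finite regularity and some $\rho_{\!\scri}$-decay, controlled via Lemma~\ref{LemmaNPCoeff}, equation~\eqref{EqNPCoeffeb}, and the $\cO_k^{\alpha,\beta}$-algebra.

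For the scalar leading part I compute $\rho_{\!\scri}\rho^{-3}\Box_g\rho$ by replacing $g$ with the Minkowski metric $\ubar g$ modulo $\tilde L$-errors, deriving the leading expressions for $\Box_{\ubar g}$, $\delta_{\ubar g}$, $\delta_{\ubar g}^*$, $\sfG_{\ubar g}$ in the coordinates of Lemma~\ref{LemmaNMSchwCausal} (in particular $r^2\Box_{\ubar g}r^{-1}\equiv 4\pa_1 r\pa_0$) and using~\eqref{EqNMPa01}. Since $\Box_{g_\bhm}$ is an eb-operator by~\eqref{EqNMKoszul} (Remark~\ref{RmkNMKoszul}, Corollary~\ref{CorNMLot}) and $S^2\wt T^*M$ is the rescaled bundle of Lemma~\ref{LemmaNMBundleRel}, the scalar part of $\Box_{g_\bhm}$ reproduces the operator $L$ of Lemma~\ref{LemmaNMOp}, i.e.\ $-2\rho_{\!\scri}\pa_{\rho_{\!\scri}}(\rho_0\pa_{\rho_0}-\rho_{\!\scri}\pa_{\rho_{\!\scri}})+x_{\!\scri}^2\slDelta$, while the connection corrections either assemble into the tensorial $x_{\!\scri}^2\slDelta$ or are $x_{\!\scri}\Diffeb^2$-errors. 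The $\gamma^\cC$-, $\gamma^\Ups$-entries of $A_{g,E^\cC,E^\Ups}$ then come, exactly as in the Minkowski computation of~\S\ref{SL} culminating in~\eqref{EqLAEnd}, from the $\gamma^\cC$-part of $(\delta_{g_\bhm,E^\cC}^*-\delta_g^*)\delta_g\sfG_g$ together with $\delta_{g_\bhm,E^\cC}^*(\delta_{g_\bhm,E^\Ups}-\delta_{g_\bhm})\sfG_{g_\bhm}$, the difference between $g_\bhm$ and $\ubar g$ being again a $\bhm/r$-error.

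The new, $h$-dependent entries of $A_{g,E^\cC,E^\Ups}$ and $B_{g,E^\cC,E^\Ups}$ come from the remaining curvature and $\sC_g$ terms. The Riemann part of $\sR_g$, with the leading perturbation component $R^0{}_{\bar b 1\bar d}\equiv r^{-1}\pa_1^2 h_{\bar b\bar d}$ of Corollary~\ref{CorNERiem}, produces — after lowering/raising indices against $\ubar g^{-1}=-4\pa_0\otimes_s\pa_1+r^{-2}\slg^{-1}$ (so that $R^0{}_{ab}{}^0$ is proportional to $r\pa_1^2 h_{\bar a\bar b}$) and the conjugation by $2\rho_{\!\scri}\rho^{-3}(\cdot)\rho=2\rho_{\!\scri}r^2(\cdot)$ — exactly the entry $B_{7,1}$, the trace-free spherical output from the $h_{0 0}$ component. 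The term $\delta_{g_\bhm,E^\cC}^*\sC_g$, using the ``big'' perturbation Christoffel symbols $C^0_{1 1}\equiv-r^{-1}\pa_1 h_{1 1}$ and $C^0_{a b}\equiv r\pa_1 h_{\bar a\bar b}$ (modulo errors) from Lemma~\ref{LemmaNEGamma} and $u^{1 1}\equiv 4 u_{0 0}$, gives a gauge $1$-form whose $\pa_1$-derivative yields both the zeroth-order-in-$u$ entry $B_{4,1}=2\rho_0^{-1}\pa_1^2 h_{1 1}$ (from $\delta^*$ hitting the coefficient $\pa_1 h$) and the first-order-in-$u$ entries $A_{7,1}=2\pa_1 h_{\bar a\bar b}$, $A_{4,7}=-\half\pa_1 h^{\bar a\bar b}$ (from $\delta^*$ hitting the unknown, with $\rho_0^{-1}\pa_1\equiv\rho_0\pa_{\rho_0}-\rho_{\!\scri}\pa_{\rho_{\!\scri}}$ by~\eqref{EqNMPa01}, producing the form $A(\rho_0\pa_{\rho_0}-\rho_{\!\scri}\pa_{\rho_{\!\scri}})$ in~\eqref{EqNMOp}).

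I expect the \textbf{main obstacle} to be the bookkeeping of the many remaining first-order-in-$u$ terms with non-$x_{\!\scri}$-decaying coefficients $\propto\pa_1 h$ arising from $\delta_{g_\bhm,E^\cC}^*\sC_g$, from $(\delta_{g_\bhm,E^\cC}^*-\delta_g^*)\delta_g\sfG_g$, and from the $\pa_1 h$-carrying Christoffel terms inside $\Box_g$: one must verify that all of these cancel modulo $\tilde L$-errors (and modulo the $h$-dependent reordering commutator $[\rho_{\!\scri}\pa_{\rho_{\!\scri}}-A,\rho_0\pa_{\rho_0}-\rho_{\!\scri}\pa_{\rho_{\!\scri}}]$, which is again an $\tilde L$-error) except for the two surviving perturbation entries of $A$, and carry the precise index placements and numerical constants through the rescaling. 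Once these cancellations and the residual decay estimates (using Lemmas~\ref{LemmaNPCoeff}, \ref{LemmaNEGamma}, \ref{LemmaNEUps}, Corollary~\ref{CorNERiem}, and the $\cO_k^{\alpha,\beta}$-algebra) are in place, the stated decomposition $L_{g,E^\cC,E^\Ups}=L_{g,E^\cC,E^\Ups}^0+\tilde L_{g,E^\cC,E^\Ups}$ with $\tilde L_{g,E^\cC,E^\Ups}\in(x_{\!\scri}\CI(\Omega)+\Hb^{k-2,(\ell_0,2\ell_{\!\scri})}(\Omega))\Diffeb^2(M;S^2\wt T^*M)$ follows.
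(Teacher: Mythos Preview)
Your decomposition into six pieces and the overall strategy match the paper's proof exactly (equation~\eqref{EqNELinTerms}): the scalar wave part reproduces Lemma~\ref{LemmaNMOp}, the $\gamma^\cC,\gamma^\Ups$ entries of $A$ arise from the Minkowski computation of~\S\ref{SL}, and the $h$-dependent entries come from the $C$-tensor and curvature terms. Your identification of the ``main obstacle'' as the bookkeeping of $\pa_1 h$-terms is also accurate.

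There is, however, one misattribution. You claim $A_{7,1}=2\pa_1 h_{\bar a\bar b}$ comes from $\delta_{g_\bhm,E^\cC}^*\sC_g$; it does not. The leading part of $\delta_{g_\bhm,E^\cC}^*$ (equation~\eqref{EqNELinDelstar}) has zero seventh row, so that term produces nothing in the trace-free spherical slot. The entry $A_{7,1}$ instead arises from the \emph{other} appearance of the $C$-tensor, namely $(\delta_{g_\bhm}^*-\delta_g^*)\omega=C^{\bar\kappa}_{\bar\mu\bar\nu}\omega_{\bar\kappa}$ inside the second summand $(\delta_{g_\bhm,E^\cC}^*-\delta_g^*)\delta_g\sfG_g$; see~\eqref{EqNELinC1to2} and~\eqref{EqNELinTerm2}. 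The $C$-tensor enters the expansion twice---once contracting a 1-form (difference of $\delta^*$'s) and once contracting a 2-tensor ($\sC_g$ from the linearized gauge 1-form)---and these two contributions cancel in the $(4,1)$ and $(5,3)$ slots of $A$, while the commutator $[\pa_1,M_{\sC}]$ from~\eqref{EqELinTerm4} combines with curvature to produce the stated $B$ and cancel the would-be $(4,7)$ and $(5,3)$ entries of $B$. The tensor wave operator $\Box_g$ itself contributes no surviving $\pa_1 h$-terms to $A$ or $B$ (all are absorbed into the error). Tracking the two distinct roles of $C$ is precisely the cancellation you anticipate; once you correct this attribution, your plan goes through.
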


If $h=0$, then $A_{g,E^\cC,E^\Ups}$ equals $A_{E^\cC,E^\Ups}$ from~\eqref{EqLAEnd}. General $h$ contribute bounded terms at $\scri^+$ and do not affect the block triangular structure of $A_{g,E^\cC,E^\Ups}$; see~\S\ref{SsNY}.

\begin{proof}[Proof of Proposition~\usref{PropNELin}]
  We will analyze the terms in the expression
  \begin{equation}
  \label{EqNELinTerms}
  \begin{split}
    2 P'_{g,E^\cC,E^\Ups} &= \Box_g + 2(\delta_{g_\bhm,E^\cC}^*-\delta_g^*)\delta_g\sfG_g + 2\delta_{g_\bhm,E^\cC}^*(\delta_{g_\bhm,E^\Ups}-\delta_{g_\bhm})\sfG_{g_\bhm} \\
      &\qquad + 2\delta_{g_\bhm,E^\cC}^*\sC_g - 2\delta_{g_\bhm,E^\cC}^*\sY_g + 2\sR_g,
  \end{split}
  \end{equation}
  with $\sC_g$ and $\sY_g$ defined in~\eqref{EqLPLinUps}, one by one.

  \pfstep{Tensor wave operator.} Following Definition~\ref{DefNMSphInd}, we set
  \[
    \Gamma^{\bar\kappa}_{\bar\mu\bar\nu}=r^{s(\kappa)-s(\mu,\nu)}\Gamma^\kappa_{\mu\nu}, \qquad
    \Gamma_{\bar\kappa\bar\mu\bar\nu}=r^{-s(\kappa,\mu,\nu)}\Gamma_{\kappa\mu\nu}.
  \]
  By Lemma~\ref{LemmaNEGamma}, we have
  \begin{subequations}
  \begin{alignat}{3}
  \label{EqNELinGamma1}
      \Gamma^{\bar\sigma}_{0\bar\mu}&\in r^{-2}\CI+\cO_{k-1}^{2+\ell_0,1+\ell_{\!\scri}}, & & s(\sigma,\nu)<2, \\
  \label{EqNELinGamma2}
      \Gamma^{\bar\sigma}_{0\bar\mu}&\in \half r^{-1}\delta_\mu^\sigma +r^{-2}\CI+\cO_{k-1}^{2+\ell_0,1+\ell_{\!\scri}}, &\qquad & s(\sigma,\nu)=2, \\
  \label{EqNELinGammaTot}
    \Gamma^{\bar\sigma}_{\bar\kappa\bar\mu} &\in r^{-1}\CI + \cO_{k-1}^{2+\ell_0,1-} & & \forall\,\sigma,\kappa,\mu.
  \end{alignat}
  \end{subequations}
  
  Given a symmetric 2-tensor $u$ on $\Omega\subset M$, we begin by calculating the form of
  \[
    u_{\bar\mu\bar\nu;\bar\kappa} = r^{-s(\mu,\nu,\kappa)}\pa_\kappa\bigl(r^{s(\mu,\nu)}u_{\bar\mu\bar\nu}\bigr) - \Gamma^{\bar\sigma}_{\bar\kappa\bar\mu}u_{\bar\sigma\bar\nu} - \Gamma^{\bar\sigma}_{\bar\nu\bar\kappa}u_{\bar\mu\bar\sigma}.
  \]
  For $\kappa=0$, note that $r^{-s(\mu,\nu)}[\pa_0,r^{s(\mu,\nu)}]\equiv\half s(\mu,\nu) r^{-1}\bmod r^{-2}\CI$, which cancels the contribution of the leading order term of~\eqref{EqNELinGamma2}. Thus, by~\eqref{EqNMPa01},
  \begin{subequations}
  \begin{alignat}{2}
  \label{EqNELinCov10}
    u_{\bar\mu\bar\nu;0} &\in \pa_0 u_{\bar\mu\bar\nu} + \bigl(r^{-2}\CI+\cO_{k-1}^{2+\ell_0,1+\ell_{\!\scri}}\bigr)u &\ \subset\ & \bigl(\rho_0 x_{\!\scri}^2\CI+\cO_{k-1}^{2+\ell_0,1+\ell_{\!\scri}}\bigr)\Diffeb^1(M)u, \\
  \label{EqNELinCov11}
    u_{\bar\mu\bar\nu;1} &\in \pa_1 u_{\bar\mu\bar\nu} + \bigl(r^{-1}\CI+\cO_{k-1}^{2+\ell_0,1-}\bigr)u &\ \subset\ & \bigl(\rho_0\CI+\cO_{k-1}^{2+\ell_0,1-}\bigr)\Diffeb^1(M)u, \\
  \label{EqNELinCov1c}
    u_{\bar\mu\bar\nu;\bar c} &\in r^{-1}\pa_c u_{\bar\mu\bar\nu} + \bigl(r^{-1}\CI+\cO_{k-1}^{2+\ell_0,1-})u &\ \subset\ & \bigl(\rho_0 x_{\!\scri}\CI+\cO_{k-1}^{2+\ell_0,1-}\bigr)\Diffeb^1(M)u.
  \end{alignat}
  \end{subequations}
  We use this to compute the form of
  \begin{equation}
  \label{EqELinBox}
  \begin{split}
    (\Box_g u)_{\bar\mu\bar\nu} &= -r^{-s(\mu,\nu,\kappa,\lambda)}g^{\bar\kappa\bar\lambda}\pa_\lambda\bigl(r^{s(\mu,\nu,\kappa)}u_{\bar\mu\bar\nu;\bar\kappa}\bigr) \\
    &\qquad + g^{\bar\kappa\bar\lambda}\bigl(\Gamma^{\bar\sigma}_{\bar\mu\bar\lambda}u_{\bar\sigma\bar\nu;\bar\kappa} + \Gamma^{\bar\sigma}_{\bar\nu\bar\lambda}u_{\bar\mu\bar\sigma;\bar\kappa} + \Gamma^{\bar\sigma}_{\bar\kappa\bar\lambda}u_{\bar\mu\bar\nu;\bar\sigma}\bigr).
  \end{split}
  \end{equation}
  In the second line of~\eqref{EqELinBox}, those terms in which $u$ is covariantly differentiated along $\pa_0,\pa_a$ lie in $(\rho_0^2 x_{\!\scri}^3\CI+\cO_{k-2}^{3+\ell_0,3/2-})\Diffeb^1(M)u$ by~\eqref{EqNELinGammaTot}, \eqref{EqNELinCov10}, and \eqref{EqNELinCov1c} (using that multiplication by $x_{\!\scri}$ maps $\cO_{k-2}^{\alpha,1-}\to\cO_{k-2}^{\alpha,3/2-}$). Next, Lemmas~\ref{LemmaNPCoeff} and \ref{LemmaNEGamma} give $g^{\bar\kappa\bar\lambda}\Gamma^1_{\bar\kappa\bar\lambda} \in 2 r^{-1}+\cO_{k-1}^{2+\ell_0,1+\ell_{\!\scri}}$; using~\eqref{EqNELinCov11}, the terms in the second line of~\eqref{EqELinBox} involving derivatives of $u$ along $\pa_1$ are thus modulo $(r^{-2}\CI+\cO_{k-1}^{3+\ell_0,1+\ell_{\!\scri}})\Diffeb^1(M)u$ equal to
  \begin{align*}
    &g^{1 0}\Gamma^{\bar\sigma}_{\bar\mu 0}u_{\bar\sigma\bar\nu;1} + g^{1 0}\Gamma^{\bar\sigma}_{\bar\nu 0}u_{\bar\mu\bar\sigma;1} + g^{\bar\kappa\bar\lambda}\Gamma^1_{\bar\kappa\bar\lambda}u_{\bar\mu\bar\nu;1} \equiv (-s(\mu,\nu)+2)r^{-1}\pa_1 u_{\bar\mu\bar\nu}.
  \end{align*}
  For the first term on the right in~\eqref{EqELinBox}, all terms with $(\kappa,\lambda)\neq(0,1),(1,0),(a,b)$ produce terms in $\cO_{k-2}^{3+\ell_0,1+\ell_{\!\scri}}\Diffeb^2(M)u$. The remaining terms sum to
  \begin{align*}
    &2 r^{-s(\mu,\nu)}\pa_0\bigl(r^{s(\mu,\nu)}\pa_1 u_{\bar\mu\bar\nu}\bigr) + 2\pa_1\pa_0 u_{\bar\mu\bar\nu} + \bigl(r^{-2}\CI+\cO_{k-1}^{3+\ell_0,1+\ell_{\!\scri}}\bigr)\Diffeb^2(M)u \\
    &\quad\qquad - r^{-2}\slg^{a b}\pa_a\pa_b u_{\bar\mu\bar\nu} + \bigl(\rho_0^2 x_{\!\scri}^3\CI+\cO_{k-2}^{3+\ell_0,3/2-}\bigr)\Diffeb^2(M)u,
  \end{align*}
  with the first line capturing the non-spherical, the second line the spherical terms. Plugging in~\eqref{EqNMPa01} and using $2\ell_{\!\scri}<1$ (so $\rho_{\!\scri}\rho^{-3}\cO_{k-2}^{3+\ell_0,3/2-}\rho\subset\cO_{k-2}^{1+\ell_0,\ell_{\!\scri}}$), we thus obtain
  \begin{equation}
  \label{EqNELinTerm1}
  \begin{split}
    (\rho_{\!\scri}\rho^{-3}\Box_g\rho u)_{\bar\mu\bar\nu} &\in -2\rho_{\!\scri}\pa_{\rho_{\!\scri}}(\rho_0\pa_{\rho_0}-\rho_{\!\scri}\pa_{\rho_{\!\scri}})u_{\bar\mu\bar\nu} - \slg^{a b}(x_{\!\scri}\pa_a)(x_{\!\scri}\pa_b)u_{\bar\mu\bar\nu} \\
      &\quad\qquad + \bigl(x_{\!\scri}\CI+\cO_{k-2}^{1+\ell_0,\ell_{\!\scri}}\bigr)\Diffeb^2(M)u.
  \end{split}
  \end{equation}
  The coordinate derivatives $\pa_a$ on $\Sph^2$ can be replaced by covariant derivatives $\slnabla_a$, the difference in local coordinates being $x_{\!\scri}(\slnabla_a-\pa_a)x_{\!\scri}\pa_a\in x_{\!\scri}\Diffeb^1$.

  \pfstep{Modified symmetric gradient.} Next, consider the second summand in~\eqref{EqNELinTerms}. We have
  \begin{align}
    \bigl((\delta_{g_\bhm,E^\cC}^*-\delta_g^*)\omega\bigr)_{\bar\mu\bar\nu} &= \bigl((\delta_{g_\bhm,E^\cC}^*-\delta_{g_\bhm}^*)\omega\bigr)_{\bar\mu\bar\nu} + \bigl((\delta_{g_\bhm}^*-\delta_g^*)\omega\bigr)_{\bar\mu\bar\nu} \nonumber\\
  \label{EqNELinDelstarDiff}
      &=\bigl((\delta_{g_\bhm,E^\cC}^*-\delta_{g_\bhm}^*)\omega\bigr)_{\bar\mu\bar\nu} + C_{\bar\mu\bar\nu}^{\bar\kappa}\omega_{\bar\kappa},
  \end{align}
  where $C_{\bar\mu\bar\nu}^{\bar\kappa}=\Gamma(g)_{\bar\mu\bar\nu}^{\bar\kappa}-\Gamma(g_\bhm)_{\bar\mu\bar\nu}^{\bar\kappa}$. In the splittings~\eqref{EqNMVbSplit}, we have $\cd^\cC=\cd^\Ups=(\half,\half,0)$, so
  \begin{equation}
  \label{EqELinTdelDelDiff2}
    \delta_{g_\bhm,E^\cC}^*-\delta_{g_\bhm}^* \in
    \gamma^\cC r^{-1}
    \begin{pmatrix}
      1 & 0 & 0 \\
      0 & 0 & 0 \\
      0 & 0 & \half \\
      0 & 1 & 0 \\
      0 & 0 & \half \\
      1 & 1 & 0 \\
      0 & 0 & 0
    \end{pmatrix}
    +r^{-2}\CI(M;\Hom(\wt T^*M,S^2\wt T^*M)).
  \end{equation}
  For the second term in~\eqref{EqNELinDelstarDiff}, we infer from Lemma~\ref{LemmaNEGamma} that, modulo $\cO_{k-1}^{2+\ell_0,1+\ell_{\!\scri}}$, we have
  \begin{equation}
  \label{EqNELinCtensor}
    C^0_{1 1}\equiv -r^{-1}\pa_1 h_{1 1}, \qquad
    C^{\bar c}_{1\bar b}=C_{\bar b 1}^{\bar c}\equiv \half r^{-1}\pa_1 h_{\bar b}{}^{\bar c}, \qquad
    C^0_{\bar a\bar b}\equiv r^{-1}\pa_1 h_{\bar a\bar b},
  \end{equation}
  while $C^{\bar\kappa}_{\bar\mu\bar\nu}\equiv 0$ for all other $\kappa,\mu,\nu$. Using $\sltr h\in\cO_k^{\ell_0,\ell_{\!\scri}}$, the operator $\omega\mapsto (C^{\bar\kappa}_{\bar\mu\bar\nu}\omega_{\bar\kappa})$ is thus
  \begin{equation}
  \label{EqNELinC1to2}
    r^{-1}
    \begin{pmatrix}
      0 & 0 & 0 \\
      0 & 0 & 0 \\
      0 & 0 & 0 \\
      -\pa_1 h_{1 1} & 0 & 0 \\
      0 & 0 & \half\pa_1 h_{\bar a}{}^{\bar b} \\
      0 & 0 & 0 \\
      \pa_1 h_{\bar a\bar b} & 0 & 0
    \end{pmatrix}
    + \cO_{k-1}^{2+\ell_0,1+\ell_{\!\scri}}.
  \end{equation}

  We compute $(\delta_g u)_{\bar\mu}=-g^{\bar\lambda\bar\kappa}u_{\bar\mu\bar\lambda;\bar\kappa}$ using~\eqref{EqNELinCov10}--\eqref{EqNELinCov1c} and Lemma~\ref{LemmaNPCoeff}. The terms with $\kappa\neq 1$ contribute $\bigl(\rho_0 x_{\!\scri}\CI+\cO_{k-1}^{2+\ell_0,1-}\bigr)\Diffeb^1(M)u$, as do the terms with $\kappa=1$, $\lambda\neq 0$, so
  \begin{equation}
  \label{EqNELinDiv}
    \delta_g \in
      \begin{pmatrix}
        2\pa_1 & 0 & 0 & 0 & 0 & 0 & 0 \\
        0 & 2\pa_1 & 0 & 0 & 0 & 0 & 0 \\
        0 & 0 & 2\pa_1 & 0 & 0 & 0 & 0
      \end{pmatrix}
      + \bigl(\rho_0 x_{\!\scri}\CI+\cO_{k-1}^{2+\ell_0,1-}\bigr)\Diffeb^1.
  \end{equation}

  Lastly, Lemma~\ref{LemmaNPCoeff} implies
  \begin{equation}
  \label{EqNELinTrRev}
    \sfG_g \in
    \begin{pmatrix}
      1 & 0 & 0 & 0 & 0 & 0 & 0 \\
      0 & 0 & 0 & 0 & 0 & \half & 0 \\
      0 & 0 & 1 & 0 & 0 & 0 & 0 \\
      0 & 0 & 0 & 1 & 0 & 0 & 0 \\
      0 & 0 & 0 & 0 & 1 & 0 & 0 \\
      0 & 2 & 0 & 0 & 0 & 0 & 0 \\
      0 & 0 & 0 & 0 & 0 & 0 & 1
    \end{pmatrix}
    + r^{-1}\CI + \cO_k^{1+\ell_0,1-}.
  \end{equation}
  Combining~\eqref{EqNELinDelstarDiff}, \eqref{EqELinTdelDelDiff2}, and \eqref{EqNELinC1to2}--\eqref{EqNELinTrRev} gives
  \begin{equation}
  \label{EqNELinTerm2}
  \begin{split}
    &\rho_{\!\scri}\rho^{-3}\bigl(2(\delta_{g_\bhm,E^\cC}^*-\delta_g^*)\delta_g\sfG_g\bigr)\rho \\
    &\quad\in
      2\begin{pmatrix}
        2\gamma^\cC & 0 & 0 & 0 & 0 & 0 & 0 \\
        0 & 0 & 0 & 0 & 0 & 0 & 0 \\
        0 & 0 & \gamma^\cC & 0 & 0 & 0 & 0 \\
        -2\pa_1 h_{1 1} & 0 & 0 & 0 & 0 & \gamma^\cC & 0 \\
        0 & 0 & \gamma^\cC+\pa_1 h_{\bar a}{}^{\bar b} & 0 & 0 & 0 & 0 \\
        2\gamma^\cC & 0 & 0 & 0 & 0 & \gamma^\cC & 0 \\
        2\pa_1 h_{\bar a\bar b} & 0 & 0 & 0 & 0 & 0 & 0
      \end{pmatrix}\rho_0^{-1}\pa_1 \\
      &\quad\qquad + \bigl(x_{\!\scri}\CI+\cO_{k-1}^{1+\ell_0,\ell_{\!\scri}}\bigr)\Diffeb^1.
  \end{split}
  \end{equation}

  \pfstep{Modified divergence.} Using Lemma~\ref{LemmaNEGamma} with $h=0$, the third summand in~\eqref{EqNELinTerms} is
  \begin{align}
  \label{EqNELinDelstar}
    \delta_{g_\bhm,E^\cC}^* &\in \begin{pmatrix} 0 & 0 & 0 \\ \half & 0 & 0 \\ 0 & 0 & 0 \\ 0 & 1 & 0 \\ 0 & 0 & \half \\ 0 & 0 & 0 \\ 0 & 0 & 0 \end{pmatrix}\pa_1 + \rho_0 x_{\!\scri}\Diffeb^1, \\
  \label{EqNELinDelstar2}
    \delta_{g_\bhm,E^\Ups}-\delta_{g_\bhm} &\in \gamma^\Ups r^{-1}\begin{pmatrix} -2 & 0 & 0 & 0 & 0 & -1 & 0 \\ 0 & 0 & 0 & -2 & 0 & -1 & 0 \\ 0 & 0 & -2 & 0 & -2 & 0 & 0 \end{pmatrix} + r^{-2}\CI.
  \end{align}
  Therefore,
  \begin{align}
    &\rho_{\!\scri}\rho^{-3}\bigl(2\delta_{g_\bhm,E^\cC}^*(\delta_{g_\bhm,E^\Ups}-\delta_{g_\bhm})\sfG_{g_\bhm}\bigr)\rho \nonumber\\
  \label{EqNELinTerm3}
    &\quad \in 2
     \begin{pmatrix}
       0 & 0 & 0 & 0 & 0 & 0 & 0 \\
       -\gamma^\Ups & -\gamma^\Ups & 0 & 0 & 0 & 0 & 0 \\
       0 & 0 & 0 & 0 & 0 & 0 & 0 \\
       0 & -2\gamma^\Ups & 0 & -2\gamma^\Ups & 0 & 0 & 0 \\
       0 & 0 & -\gamma^\Ups & 0 & -\gamma^\Ups & 0 & 0 \\
       0 & 0 & 0 & 0 & 0 & 0 & 0 \\
       0 & 0 & 0 & 0 & 0 & 0 & 0
     \end{pmatrix}
     (\rho_0\pa_{\rho_0}-\rho_{\!\scri}\pa_{\rho_{\!\scri}}) \\
    &\quad\qquad + \bigl(x_{\!\scri}\CI+\cO_{k-1}^{1+\ell_0,\ell_{\!\scri}}\bigr)\Diffeb^1. \nonumber
  \end{align}

  \pfstep{Term involving $\sC_g$.} We turn to the fourth summand in~\eqref{EqNELinTerms}. When calculating $(\sC_g u)_{\bar\kappa}=g_{\bar\kappa\bar\lambda}g^{\bar\mu\bar\sigma}g^{\bar\nu\bar\tau}C^{\bar\lambda}_{\bar\mu\bar\nu} u_{\bar\sigma\bar\tau}$, one can replace $g\in g_\bhm+\cO_k^{1+\ell_0,1-}$ by $g_\bhm$ at the expense of an error term in $\cO_{k-1}^{3+2\ell_0,2-}$ since $C_{\bar\mu\bar\nu}^{\bar\lambda}\in\cO_{k-1}^{2+\ell_0,1-}$ (cf.\ \eqref{EqNELinCtensor}); furthermore, the components of the tensor $C$ other than those in~\eqref{EqNELinCtensor} contribute terms in $\cO_{k-1}^{2+\ell_0,1+\ell_{\!\scri}}$. Therefore,
  \[
    \sC_g =
    \begin{pmatrix}
      0 & 0 & 0 & 0 & 0 & 0 & 0 \\
      2 r^{-1}\pa_1 h_{1 1} & 0 & 0 & 0 & 0 & 0 & -\half r^{-1}\pa_1 h^{\bar a\bar b} \\
      0 & 0 & -2 r^{-1}\pa_1 h_{\bar a}{}^{\bar b} & 0 & 0 & 0 & 0
    \end{pmatrix}
    + \cO_{k-1}^{2+\ell_0,1+\ell_{\!\scri}}.
  \]
  Together with~\eqref{EqNELinDelstar}, and using again that $\ell_{\!\scri}<\half$, we thus have
  \begin{equation}
  \label{EqELinTerm4}
  \begin{split}
    2\rho_{\!\scri}\rho^{-3}\delta_{g_\bhm,E^\cC}^*\sC_g\rho &\in
      2\rho_0^{-1}\pa_1 \circ
      \begin{pmatrix}
        0 & 0 & 0 & 0 & 0 & 0 & 0 \\
        0 & 0 & 0 & 0 & 0 & 0 & 0 \\
        0 & 0 & 0 & 0 & 0 & 0 & 0 \\
        2\pa_1 h_{1 1} & 0 & 0 & 0 & 0 & 0 & -\half\pa_1 h^{\bar a\bar b} \\
        0 & 0 & -\pa_1 h_{\bar a}{}^{\bar b} & 0 & 0 & 0 & 0 \\
        0 & 0 & 0 & 0 & 0 & 0 & 0 \\
        0 & 0 & 0 & 0 & 0 & 0 & 0
      \end{pmatrix} \\
      & \qquad + \cO_{k-1}^{1+\ell_0,\ell_{\!\scri}}\Diffeb^1.
  \end{split}
  \end{equation}

  \pfstep{Term involving $\sY_g$.} For the fifth summand in~\eqref{EqNELinTerms}, note that $\delta_{g_\bhm,E^\cC}^*\in\rho_0\Diffeb^1$ by~\eqref{EqNELinDelstar}. Together with $\Ups(g;g_\bhm)^{\bar\nu}\in\cO_{k-1}^{2+\ell_0,1+\ell_{\!\scri}}$ from Lemma~\ref{LemmaNEUps}, we get
  \begin{equation}
  \label{EqNELinTerm5}
    -2\rho_{\!\scri}\rho^{-3}\delta_{g_\bhm,E^\cC}^*\sY_g\rho \in \cO_{k-1}^{1+\ell_0,\ell_{\!\scri}}\Diffeb^1.
  \end{equation}

  \pfstep{Curvature term.} The final term of~\eqref{EqNELinTerms} can be computed using Corollary~\ref{CorNERiem}. A fortiori, all components of the Riemann and Ricci tensor lie in $r^{-3}\CI+\cO_{k-2}^{3+\ell_0,1-}$, and hence replacing $g$ by $g_\bhm$ in the definition of $\sR_g$ produces $\cO_{k-2}^{4+\ell_0,2-}$ error terms. One computes
  \[
    2\rho_{\!\scri}\rho^{-3}\sR_g\rho \in 2\rho_0^{-1}
      \begin{pmatrix}
        0 & 0 & 0 & 0 & 0 & 0 & 0 \\
        0 & 0 & 0 & 0 & 0 & 0 & 0 \\
        0 & 0 & 0 & 0 & 0 & 0 & 0 \\
        0 & 0 & 0 & 0 & 0 & 0 & \half\pa_1^2 h^{\bar a\bar b} \\
        0 & 0 & \pa_1^2 h_{\bar a}{}^{\bar b} & 0 & 0 & 0 & 0 \\
        0 & 0 & 0 & 0 & 0 & 0 & 0 \\
        2\pa_1^2 h_{\bar a\bar b} & 0 & 0 & 0 & 0 & 0 & 0
      \end{pmatrix}
       +\rho_0 x_{\!\scri}^4\CI + \cO_{k-2}^{1+\ell_0,\ell_{\!\scri}}.
  \]
  Combining this with~\eqref{EqNELinTerm1}, \eqref{EqNELinTerm2}, and \eqref{EqNELinTerm3}--\eqref{EqNELinTerm5}, and recalling that $\rho_0^{-1}\pa_1\equiv\rho_0\pa_{\rho_0}-\rho_{\!\scri}\pa_{\rho_{\!\scri}}\bmod x_{\!\scri}\Diffeb^1$, proves the Proposition.
\end{proof}

\begin{definition}[Forcing terms]
\label{DefNEForc}
  For $k\in\N_0$ and $\ell_0,\ell_{\!\scri}\in\R$, $\ell_{\!\scri}>0$, we define
  \[
    \sF^{k,(\ell_0,\ell_{\!\scri})} := \bigl\{ f=\tilde f+f_{1 1}^{(0)}(\dd x^1)^2 \colon \tilde f\in\Hb^{k,(\ell_0,2\ell_{\!\scri})}(\Omega;S^2\wt T^*M),\ f_{1 1}^{(0)}\in\Hb^{k,\ell_0}(\scri^+\cap\Omega) \bigr\},
  \]
  with norm $\|f\|_{\sF^{k,(\ell_0,\ell_{\!\scri})}} := \|\tilde f\|_{\Hb^{k,(\ell_0,2\ell_{\!\scri})}(\Omega)} + \|f_{1 1}^{(0)}\|_{\Hb^{k,\ell_0}(\scri^+\cap\Omega)}$.
\end{definition}

\begin{cor}[Nonlinear error term]
\label{CorNENonlin}
  Let $g=g_\bhm+r^{-1}h\in\sG^{k,(\ell_0,\ell_{\!\scri})}$, $k\geq 4$, with $h$ small in $\tilde\sG^{3,(\ell_0,\ell_{\!\scri})}$. Then $2 \rho_{\!\scri}^{-1}\rho^3 P_{E^\cC,E^\Ups}(g)\in\sF^{k-2,(\ell_0,\ell_{\!\scri})}$; more precisely,\footnote{We can replace $h_{1 1}$ and $\slpi_0 h$ by their $\scri^+$-leading order terms $h_{1 1}^{(0)}$ and $\slh^{(0)}$, cf.\ Definition~\ref{DefNPMetrics}.}
  \begin{equation}
  \label{EqNENonlin}
    2\rho_{\!\scri}^{-1}\rho^3 P_{E^\cC,E^\Ups}(g) \in \rho_0^{-1}\Bigl(-4\gamma^\Ups\pa_1 h_{1 1}-\frac12|\pa_1\slpi_0 h|_{\slg^{-1}}^2\Bigr)(\dd x^1)^2 + \cO_{k-2}^{\ell_0,\ell_{\!\scri}}.
  \end{equation}
\end{cor}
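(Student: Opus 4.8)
The plan is to reduce the statement to Proposition~\usref{PropNELin} by integrating the linearization along the straight‑line family of metrics from $g_\bhm$ to $g$. First I would record that $P_{E^\cC,E^\Ups}(g_\bhm)=0$: indeed $\Ric(g_\bhm)=0$ and the gauge $1$‑form of $g_\bhm$ relative to itself vanishes, $\Ups(g_\bhm;g_\bhm)=0$, whence also $\Ups_{E^\Ups}(g_\bhm;g_\bhm)=\Ups(g_\bhm;g_\bhm)-(\delta_{g_\bhm,E^\Ups}-\delta_{g_\bhm})\sfG_{g_\bhm}(g_\bhm-g_\bhm)=0$. For $s\in[0,1]$ set $g_s:=g_\bhm+s r^{-1}h$; since $\|sh\|_{\tilde\sG^{3,(\ell_0,\ell_{\!\scri})}}\le\|h\|_{\tilde\sG^{3,(\ell_0,\ell_{\!\scri})}}$ is small, Lemma~\usref{LemmaNPCoeff} shows $g_s\in\sG^{k,(\ell_0,\ell_{\!\scri})}$ is Lorentzian. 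Since $\tfrac{\dd}{\dd s}P_{E^\cC,E^\Ups}(g_s)=P'_{g_s,E^\cC,E^\Ups}(r^{-1}h)$, the fundamental theorem of calculus together with $P_{E^\cC,E^\Ups}(g_\bhm)=0$ and the conjugation $L_{g,E^\cC,E^\Ups}=2\rho_{\!\scri}\rho^{-3}P'_{g,E^\cC,E^\Ups}\rho$ of Definition~\usref{DefNEOp} identify the left‑hand side of~\eqref{EqNENonlin} with $\int_0^1 L_{g_s,E^\cC,E^\Ups}(h)\,\dd s$, so the whole problem becomes a uniform‑in‑$s$ application of Proposition~\usref{PropNELin}.

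Next I would plug each $g_s$ (whose metric perturbation is $sh$) into Proposition~\usref{PropNELin}, getting $L_{g_s,E^\cC,E^\Ups}=L^0_{g_s,E^\cC,E^\Ups}+\tilde L_{g_s,E^\cC,E^\Ups}$ with $A_{g_s,E^\cC,E^\Ups}$, $B_{g_s,E^\cC,E^\Ups}$ the displayed endomorphisms with $h$ replaced by $sh$, and $\tilde L_{g_s,E^\cC,E^\Ups}\in\bigl(x_{\!\scri}\CI(\Omega)+\Hb^{k-2,(\ell_0,2\ell_{\!\scri})}(\Omega)\bigr)\Diffeb^2(M;S^2\wt T^*M)$. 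I decompose $h=h^\flat+h^\sharp$, with $h^\flat$ the $\scri^+$‑leading parts $h_{1 1}^{(0)}$, $\slh^{(0)}$ (extended $\rho_{\!\scri}$‑independently, so $\rho_{\!\scri}\pa_{\rho_{\!\scri}}h^\flat=0$) and $h^\sharp:=\pi^{\cC\Ups}h+(\pi_{1 1}h-h_{1 1}^{(0)})+(\slpi_0 h-\slh^{(0)})\in\cO_k^{\ell_0,\ell_{\!\scri}}$. Using $\rho_0\pa_{\rho_0}-\rho_{\!\scri}\pa_{\rho_{\!\scri}}\equiv\rho_0^{-1}\pa_1\bmod x_{\!\scri}\Diffeb^1(M)$ from~\eqref{EqNMPa01}, one checks that every term of $L_{g_s}(h)$ lands in $\cO_{k-2}^{\ell_0,\ell_{\!\scri}}$ except the $\la(\dd x^1)^2\ra$‑slot of $L^0_{g_s}$ acting on $h^\flat$: the operator $-2\rho_{\!\scri}\pa_{\rho_{\!\scri}}(\rho_0\pa_{\rho_0}-\rho_{\!\scri}\pa_{\rho_{\!\scri}})$ annihilates $h^\flat$; $x_{\!\scri}^2\slDelta$ and $\tilde L_{g_s}$ gain a positive power of $\rho_{\!\scri}$ on $h^\flat$, which is enough since $\ell_{\!\scri}<\half$; $B_{g_s}$ only couples the $\la(\dd x^0)^2\ra$‑component (part of $h^\sharp$) into the relevant rows; and among the off‑diagonal entries of $A_{g_s,E^\cC,E^\Ups}$ those hitting the $\la(\dd x^1)^2\ra$‑row from other slots all act on $\pi^{\cC\Ups}$‑components. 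All of this uses the algebra $\cO_k^{\alpha_1,\beta_1}\cdot\cO_k^{\alpha_2,\beta_2}\subset\cO_k^{\alpha_1+\alpha_2,\beta_1+\beta_2}$ ($k\ge3$), the smallness of $h$, and the loss of two derivatives in $\Diffeb^2$ — the reason the hypothesis $k\ge4$ yields output in $\sF^{k-2,(\ell_0,\ell_{\!\scri})}$.

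Finally, in the surviving $\la(\dd x^1)^2\ra$‑slot the only entries of $A_{g_s,E^\cC,E^\Ups}$ linking the slots $\la(\dd x^1)^2\ra$ and $r^2\ker\sltr$ (carrying $h^\flat$) to the $\la(\dd x^1)^2\ra$‑row are the $(4,4)$‑entry $-2\gamma^\Ups$ and the $(4,7)$‑entry $-\half\pa_1(sh)^{\bar a\bar b}$, so this slot equals, modulo $\cO_{k-2}^{\ell_0,\ell_{\!\scri}}$,
\[
  2\Bigl(-2\gamma^\Ups(\rho_0\pa_{\rho_0}-\rho_{\!\scri}\pa_{\rho_{\!\scri}})h_{1 1}-\tfrac s2(\pa_1 h^{\bar a\bar b})(\rho_0\pa_{\rho_0}-\rho_{\!\scri}\pa_{\rho_{\!\scri}})(\slpi_0 h)_{\bar a\bar b}\Bigr)\equiv\rho_0^{-1}\Bigl(-4\gamma^\Ups\pa_1 h_{1 1}-s\,|\pa_1\slpi_0 h|_{\slg^{-1}}^2\Bigr),
\]
where $\pa_1(\slpi_0 h)_{\bar a\bar b}$ being trace‑free in $a,b$ collapses the contraction with $\pa_1 h^{\bar a\bar b}$ to $|\pa_1\slpi_0 h|_{\slg^{-1}}^2$, and $\rho_0\pa_{\rho_0}-\rho_{\!\scri}\pa_{\rho_{\!\scri}}$ is replaced once more by $\rho_0^{-1}\pa_1$ (the error, an $x_{\!\scri}\Diffeb^1$‑operator on $h^\flat$, lying in $\cO_{k-2}^{\ell_0,\ell_{\!\scri}}$ since $\ell_{\!\scri}<\half$). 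Integrating the explicit $s$ over $[0,1]$ turns the second term into $-\tfrac12\rho_0^{-1}|\pa_1\slpi_0 h|_{\slg^{-1}}^2$, giving~\eqref{EqNENonlin}; and since $\rho_0^{-1}\pa_1 h_{1 1}\equiv\rho_0^{-1}\pa_1 h_{1 1}^{(0)}\in\Hb^{k-2,\ell_0}(\scri^+\cap\Omega)$ modulo $\cO_{k-2}^{\ell_0,\ell_{\!\scri}}$, the right‑hand side lies in $\sF^{k-2,(\ell_0,\ell_{\!\scri})}$. The hard part is exactly this bookkeeping: keeping the single $\scri^+$‑leading term while verifying that the off‑diagonal $A$‑couplings, the $x_{\!\scri}^2\slDelta$‑term, the zeroth‑order term $B_{g_s}$, the error operator $\tilde L_{g_s}$, and all other tensor slots contribute only to $\cO_{k-2}^{\ell_0,\ell_{\!\scri}}$.
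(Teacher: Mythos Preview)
Your argument is correct and follows essentially the same route as the paper: integrate the linearization along $g_s=g_\bhm+s r^{-1}h$ via the fundamental theorem of calculus, apply Proposition~\ref{PropNELin} to each $L_{g_s}$, and then bookkeep which pieces of $L_{g_s}^0(h)$ and $\tilde L_{g_s}(h)$ are error terms, leaving only the $(4,4)$ and $(4,7)$ entries of $A_{g_s}$ acting on the $\scri^+$-leading parts of $\pi_{1 1}h$ and $\slpi_0 h$ to produce the explicit $(\dd x^1)^2$-term. Your explicit decomposition $h=h^\flat+h^\sharp$ is a slightly more systematic way of recording what the paper does by noting $\rho_{\!\scri}\pa_{\rho_{\!\scri}}$ annihilates the leading terms and that all components of $h$ other than $\pi_{1 1}h,\slpi_0 h$ already lie in $\cO_k^{\ell_0,\ell_{\!\scri}}$; the computations and conclusions are the same.
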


By contrast to \cite[Lemma~3.5]{HintzVasyMink4}, it is the leading order term of $h_{1 1}$ that enters in~\eqref{EqNENonlin}, rather than a logarithmically divergent term of $h_{1 1}$. The term $|\pa_1\slpi_0 h|^2$ is captured by the term $(\pa_t\phi_1)^2$ in the equation for $\phi_2$ in~\eqref{EqIModel3} (with $\phi_1,\phi_2$ being models for $\slpi_0 h$, $h_{1 1}$).

\begin{proof}[Proof of Corollary~\usref{CorNENonlin}]
  Instead of a direct computation, we integrate up the linearization of $P_{E^\cC,E^\Ups}$: the fundamental theorem of calculus gives
  \[
    P_{E^\cC,E^\Ups}(g_\bhm+r^{-1}h) = P_{E^\cC,E^\Ups}(g_\bhm) + \int_0^1 P'_{g_\bhm+r^{-1}s h,E^\cC,E^\Ups}(r^{-1}h)\,\dd s;
  \]
  since $P_{E^\cC,E^\Ups}(g_\bhm)=0$, we can therefore use Proposition~\ref{PropNELin} to compute
  \[
    2\rho_{\!\scri}^{-1}\rho^3 P_{E^\cC,E^\Ups}(g) = \int_0^1 L_{g_\bhm+r^{-1}s h,E^\cC,E^\Ups}(h)\,\dd s
  \]
  Using Definition~\ref{DefNPMetrics} and~\eqref{EqNELinError}, the error term of the Proposition contributes
  \begin{equation}
  \label{EqNENonlinErr}
    \tilde L_{g_\bhm+r^{-1}s h,E^\cC,E^\Ups}(h) \in \cO_{k-2}^{\ell_0,\ell_{\!\scri}}.
  \end{equation}
  
  Regarding the main term~\eqref{EqNELinMain}, the contribution $x_{\!\scri}^2\slDelta h\in \rho_{\!\scri}\Diffb^2(M)h\subset\cO_{k-2}^{\ell_0,1-}$ lies, a fortiori, in the space~\eqref{EqNENonlinErr}; and $B_{g_\bhm+r^{-1}s h,E^\cC,E^\Ups}h\in\cO_k^{\ell_0,\ell_{\!\scri}}$ since $h_{0 0}\in\cO_k^{\ell_0,\ell_{\!\scri}}$. In the first term of~\eqref{EqNELinMain},
  \[
    \rho_{\!\scri}\pa_{\rho_{\!\scri}}(\rho_0\pa_{\rho_0}-\rho_{\!\scri}\pa_{\rho_{\!\scri}})h=(\rho_0\pa_{\rho_0}-\rho_{\!\scri}\pa_{\rho_{\!\scri}})\rho_{\!\scri}\pa_{\rho_{\!\scri}}h\in\cO_{k-2}^{\ell_0,\ell_{\!\scri}}
  \]
  is an error term as well since $\rho_{\!\scri}\pa_{\rho_{\!\scri}}$ annihilates the leading order terms of $h$ at $\scri^+$; thus,
  \begin{equation}
  \label{EqNENonlinCalc}
    2\rho_{\!\scri}^{-1}\rho^3 P_{E^\cC,E^\Ups}(g) \equiv 2\int_0^1 A_{g_\bhm+r^{-1}s h,E^\cC,E^\Ups}(\rho_0^{-1}\pa_1 h)\,\dd s\ \bmod \cO_{k-2}^{\ell_0,\ell_{\!\scri}}.
  \end{equation}
  All coefficients of $h$ in the splitting~\eqref{EqNMVbSplit} except for $\pi_{1 1}h$ and $\slpi_0 h$ lie in $\cO_k^{\ell_0,\ell_{\!\scri}}$ and thus contribute error terms. The $\pi_{1 1}h$, resp.\ $\slpi_0 h$ component only contributes through the $(4,4)$, resp.\ $(4,7)$ entry of $A_{g_\bhm+r^{-1} s h,E^\cC,E^\Ups}$. Therefore, only the $4$-th, i.e.\ $(\dd x^1)^2$, component of~\eqref{EqNENonlinCalc} does not lie in $\cO_{k-2}^{\ell_0,\ell_{\!\scri}}$, and modulo $\cO_{k-2}^{\ell_0,\ell_{\!\scri}}$ it equals
  \[
    2\rho_0^{-1}\int_0^1\Bigl(-2\gamma^\Ups\pa_1 h_{1 1} - \frac12\pa_1(s h^{\bar a\bar b})\pa_1 h_{\bar a\bar b}\Bigr)\,\dd s = \rho_0^{-1}\Bigl(-4\gamma^\Ups\pa_1 h_{1 1} - \frac12\pa_1 h_{\bar a\bar b}\pa_1 h^{\bar a\bar b}\Bigr).\qedhere
  \]
\end{proof}

\subsection{Tame energy estimate}
\label{SsNQ}

With the modification parameters $E^\cC,E^\Ups$ fixed as in Definition~\ref{DefNEOp}, we shall now drop them from the notation, and thus simply write
\[
  P(g) := P_{E^\cC,E^\Ups}(g),\qquad
  L_g := L_{g,E^\cC,E^\Ups},\qquad
  A_g := A_{g,E^\cC,E^\Ups},\qquad\text{etc.}
\]

The first key step is an energy estimate for the linearized operator from Definition~\ref{DefNEOp} on spaces with fixed weights but arbitrarily high b-regularity; precise decay is obtained in a second step in~\S\ref{SsNY}.

\begin{prop}[Tame energy estimate]
\label{PropNQ}
  Fix $\ell_0,\ell_{\!\scri}$ as in Definition~\usref{DefNPMetrics}, and let $g=g_\bhm+r^{-1}h\in\sG^{k,(\ell_0,\ell_{\!\scri})}$, with $h$ small in $\tilde\sG^{8,(\ell_0,\ell_{\!\scri})}$. Let $\alpha_0,\alpha_{\!\scri}\in\R$ with $\alpha_{\!\scri}<\min(\alpha_0,0)$, and let $k,m\in\N_0$ with $k\geq 8$ and $m\leq k-3$. Suppose $f\in\Hb^{m,(\alpha_0,2\alpha_{\!\scri})}(\Omega;S^2\wt T^*M)$ vanishes near $\Sigma$ (in the notation of Lemma~\usref{LemmaNPCausal}). Then the unique forward solution $u$ of
  \begin{equation}
  \label{EqNQEq}
    L_g u = f
  \end{equation}
  satisfies $u\in H_{\ebop;\bop}^{(1;m),(\alpha_0,2\alpha_{\!\scri})}(\Omega;S^2\wt T^*M)$. For $m\geq 3$, we moreover have the tame estimate
  \begin{equation}
  \label{EqNQTame}
    \|u\|_{H_{\ebop;\bop}^{(1;m),(\alpha_0,2\alpha_{\!\scri})}} \leq C\Bigl(\|f\|_{\Hb^{m,(\alpha_0,2\alpha_{\!\scri})}} + \|h\|_{\tilde\sG^{m+3,(\ell_0,\ell_{\!\scri})}}\|f\|_{\Hb^{3,(\alpha_0,2\alpha_{\!\scri})}}\Bigr),
  \end{equation}
  where $C$ depends on $m,k,\alpha_0,\alpha_{\!\scri},\ell_0,\ell_{\!\scri}$, but not on $f,h$.
\end{prop}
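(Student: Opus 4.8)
\emph{Overall approach.} The plan is to run a positive commutator (energy) argument upgrading Proposition~\ref{PropNMWave} from the scalar wave operator to the tensorial operator $L_g$, using a \emph{matrix-valued} vector field multiplier, and then to propagate b-regularity by commutation exactly as in the proof of Proposition~\ref{PropNMWave}. Away from $\scri^+$ the operator $L_g$ is a b-differential operator for which $x_{\!\scri}$ is (near $I^0\setminus\scri^+$) a past time function, so the estimate there follows from the vector field argument of \cite[Proposition~4.3]{HintzVasyMink4}; by finite speed of propagation and the causal nature of $\Sigma,\Sigma_{\rm f}$ (Lemma~\ref{LemmaNPCausal}) it thus suffices to work in a small neighborhood $\{x_{\!\scri}<\eps\}$ of $\scri^+$, in the coordinates $\rho_0,\rho_{\!\scri}$. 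There we use the multiplier
\[
  W=w^2\,\Theta\, V,\qquad w=\rho_0^{-\alpha_0}\rho_{\!\scri}^{-\alpha_{\!\scri}},\quad V=-(1+c)\rho_{\!\scri}\pa_{\rho_{\!\scri}}+\rho_0\pa_{\rho_0},
\]
where $c>0$ will be taken small and $\Theta$ is a fixed constant endomorphism of $S^2\wt T^*M$, symmetric and positive definite in the splitting~\eqref{EqNMVbSplit}. By Proposition~\ref{PropNELin}, writing $X=\rho_0\pa_{\rho_0}-\rho_{\!\scri}\pa_{\rho_{\!\scri}}$,
\[
  L_g=-2\rho_{\!\scri}\pa_{\rho_{\!\scri}}X+x_{\!\scri}^2\slDelta+2 A_g X+2 B_g+\tilde L_g ,
\]
with $A_g=A_{E^\cC,E^\Ups}+\cO(\|h\|_{\tilde\sG^3})$, with $B_g$ of order zero with $\cO(\|h\|_{\tilde\sG^3})$ coefficients, and with $\tilde L_g$ of order two but with coefficients in $x_{\!\scri}\CI(\Omega)+\Hb^{k-2,(\ell_0,2\ell_{\!\scri})}(\Omega)$ — small near $\scri^+$, the first summand because $x_{\!\scri}<\eps$, the second by Sobolev embedding and smallness of $h$. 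The role of $\Theta$ is to convert the one potentially dangerous term, the first order endomorphism term $2 A_g X$, into a term of favorable sign.

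\emph{The symmetrizer.} The block triangular matrix $A_{E^\cC,E^\Ups}$ of~\eqref{EqLAEnd} has spectrum $\{2\gamma^\cC,\gamma^\cC,\gamma^\cC,-\gamma^\Ups,-\gamma^\Ups,-2\gamma^\Ups,0\}$; since $\gamma^\cC>0>\gamma^\Ups$ every eigenvalue is $\geq 0$, and the single $0$ eigenvalue — spanned by the trace-free spherical component — is semisimple: indeed the seventh row of $A_{E^\cC,E^\Ups}$ vanishes, so $e_7\notin\ran A_{E^\cC,E^\Ups}$, whence $\ker A_{E^\cC,E^\Ups}^2=\ker A_{E^\cC,E^\Ups}$. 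Consequently there exists a symmetric $\Theta\succ 0$ with $\Theta A_{E^\cC,E^\Ups}+A_{E^\cC,E^\Ups}^*\Theta\succeq 0$ (a standard Lyapunov-type fact for matrices with spectrum in the closed right half plane and semisimple on the imaginary axis; here $^*$ denotes the transpose in the splitting~\eqref{EqNMVbSplit}). Fixing such a $\Theta$, we then have $\Theta A_g+A_g^*\Theta\succeq -C\|h\|_{\tilde\sG^3}$.

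\emph{Base estimate, existence, and higher regularity.} Pairing $L_g u=f$ against $w^2\Theta Vu$ in $L^2_\bop$, the scalar leading part $-2\rho_{\!\scri}\pa_{\rho_{\!\scri}}X+x_{\!\scri}^2\slDelta$ produces, as in the proof of Proposition~\ref{PropNMWave} but with the extra constant positive factor $\Theta$, a positive elliptic term $\la\ubar Q^\Theta u,u\ra\gtrsim\|u\|_{\rho_0^{\alpha_0}\rho_{\!\scri}^{\alpha_{\!\scri}}\Heb^1}^2$ (using $\alpha_{\!\scri}<0$ for a Poincar\'e inequality in $\rho_{\!\scri}$, and $\alpha_0-\alpha_{\!\scri}>0$), together with boundary terms vanishing at $\Sigma$ and of the correct sign at $\Sigma_{\rm f}$ and at $\{\rho_0=1\}$. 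The term $2A_g X$ contributes, upon writing $Vu=Xu-c\rho_{\!\scri}\pa_{\rho_{\!\scri}}u$,
\[
  2\la(\Theta A_g+A_g^*\Theta)wXu,wXu\ra-4c\Re\la A_g Xu,\,w^2\Theta\rho_{\!\scri}\pa_{\rho_{\!\scri}}u\ra ;
\]
the first summand is $\geq -C\|h\|_{\tilde\sG^3}\|wXu\|^2$ by the choice of $\Theta$, while the second, being proportional to $c$, is bounded by $c\delta\|w\rho_{\!\scri}\pa_{\rho_{\!\scri}}u\|^2+c\delta^{-1}C(\Theta,A_g)\|wXu\|^2$. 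Choosing $\delta$, then $c$, then $\eps$ and the smallness of $\|h\|_{\tilde\sG^8}$ successively small, all of these — together with the contribution of $2B_g$ (order zero, small coefficients, absorbed after a Poincar\'e inequality) and of $\tilde L_g$ (order two but small coefficients, absorbed after one integration by parts, exactly as the $x_{\!\scri}$-error in Proposition~\ref{PropNMWave}) — are absorbed into $\la\ubar Q^\Theta u,u\ra$; Cauchy--Schwarz applied to $2\Re\la f,w^2\Theta Vu\ra$ then yields the estimate for $m=0$. The same argument applied to $L_g^*$ gives a backward estimate, so the forward solution $u\in H_{\ebop;\bop}^{(1;0),(\alpha_0,2\alpha_{\!\scri})}(\Omega;S^2\wt T^*M)$ exists by the usual Hahn--Banach/duality argument. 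For higher $m$ one commutes $X_0=\rho_0\pa_{\rho_0},X_1=\rho_{\!\scri}\pa_{\rho_{\!\scri}},X_2,X_3,X_4\in\cV(\Sph^2),X_5\equiv 1$ (spanning $\Diffb^1(M)$) through the equation: by Lemma~\ref{LemmaNMOp} the scalar leading part has commutators with $X_j$ in $x_{\!\scri}\Diff_{\ebop;\bop}^{1,1}(M)$, and the commutators with $2A_gX$, $2B_g$, $\tilde L_g$ are either of the same type or carry one more derivative of $h$. Applying the inductive hypothesis to $L_g(X_j u)=X_j f+[L_g,X_j]u$, summing over $j$, absorbing the resulting $\sum_k x_{\!\scri}X_k u$ terms on the left using the smallness of $x_{\!\scri}$ near $\scri^+$, and distributing derivatives by the Leibniz rule so that all top-order derivatives fall either on $f$ or on the $h$-dependent coefficients but never on both, gives the tame estimate~\eqref{EqNQTame} with $m+1$ in place of $m$, the constraint $m+1\le k-3$ guaranteeing that the $\le m+1$ derivatives of $h$ needed on the coefficients, together with the three derivatives consumed by Sobolev embedding, are controlled by $\|h\|_{\tilde\sG^{m+3}}$.

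\emph{Main obstacle.} The crux is the first order endomorphism term $2A_g X$: a naive scalar multiplier fails because $A_g$ is not small, and there is genuine tension between the size of $A_g$ and the (fixed) positive constants $-\alpha_{\!\scri}$ and $\alpha_0-\alpha_{\!\scri}$ produced by the background commutator. The mechanism that resolves it is precisely the structure built into the gauge and constraint-damping choices of~\S\ref{SL}: the block triangular, nonnegative-spectrum, imaginary-axis-semisimple matrix $A_{E^\cC,E^\Ups}$ admits a positive symmetrizer $\Theta$, which turns the main part of $2A_g X$ into a favorable term; and the residual indefinite cross term is made subordinate to the $c$-independent part of the positive commutator by taking the multiplier parameter $c$ small. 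Verifying that $B_g$, $\tilde L_g$, and all commutator terms are truly lower order or carry small coefficients is routine but must be tracked carefully for the tame bookkeeping.
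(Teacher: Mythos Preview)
Your approach is correct but organized differently from the paper's. The paper proceeds in three steps: (i) prove the base estimate \emph{first for large negative} $\alpha_{\!\scri}$, using the standard fiber inner product and $c=1$, so that the coefficients $-4\alpha_{\!\scri}$ and $4c(\alpha_0-\alpha_{\!\scri})$ in $\ubar Q$ are large enough to absorb the entire (merely bounded) skew-adjoint part $(L_g^*-L_g)W$ coming from $A_g$, $B_g$, $\tilde L_g$; (ii) prove the tame higher-regularity estimate by commutation (using explicit product and commutator lemmas, Lemmas~\ref{LemmaNQProd} and~\ref{LemmaNQComm}); (iii) only at the end sharpen the weight to arbitrary $\alpha_{\!\scri}<\min(\alpha_0,0)$ by redoing the $m=0$ estimate with a special fiber inner product chosen from the block lower-triangularity of $A_g$ so that its skew-adjoint part is as small as desired. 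You instead collapse (i) and (iii) into a single step by introducing a Lyapunov symmetrizer $\Theta$ with $\Theta A_{E^\cC,E^\Ups}+A_{E^\cC,E^\Ups}^*\Theta\succeq 0$ and taking $c$ small to kill the cross term. Both routes exploit the same structural fact about $A_g$; the paper's ordering has the minor advantage that the basic estimate requires no algebraic input on $A_g$ (the footnote after~\eqref{EqNQTame0} flags this as a simplification over \cite{HintzVasyMink4}), and the smallness threshold on $h$ is decoupled from $\alpha_{\!\scri}$, whereas in your scheme the chain $\delta\to c\to\eps\to\|h\|$ makes the smallness of $h$ depend on $\alpha_{\!\scri}$.

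One technical point you should make explicit: for the scalar leading part $-2\rho_{\!\scri}\pa_{\rho_{\!\scri}}X+x_{\!\scri}^2\slDelta$ paired against $w^2\Theta Vu$ to produce a clean positive elliptic form $\ubar Q^\Theta$, you need $\Theta$ to commute with $\slDelta$ acting on sections of $S^2\wt T^*M$; this forces $\Theta$ to be block-scalar in the seven-block splitting~\eqref{EqNMVbSplit} (since $\slDelta$ acts by different Laplacians on the scalar, $T^*\Sph^2$-, and $\ker\sltr$-valued summands). Fortunately such a diagonal $\Theta$ is available: $A_{E^\cC,E^\Ups}$ is lower triangular with the zero-eigenvalue block (the seventh) completely decoupled (row and column~$7$ vanish), so choosing $\theta_1\gg\cdots\gg\theta_6$ and $\theta_7$ arbitrary gives the required $\Theta A+A^T\Theta\succeq 0$.
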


We shall give a proof based on elementary (and rather imprecise) considerations.

\begin{lemma}[Tame product estimate]
\label{LemmaNQProd}
  Write points $x\in\R^n=\R\times\R^{n-1}$ as $x=(x_1,x')$. Let $q\leq m\in\N_0$. For $p\in\N_0$, denote by $d_p=\lceil\frac{p+1}{2}\rceil$ the smallest integer $>\frac{p}{2}$. Then there exists a constant $C=C(m,q)$ so that for all $h\in\CIc(\R^{n-1})$ and $u\in\CIc(\R^n)$,
  \[
    \| (D^q h)(D^{m-q}u) \|_{L^2(\R^n)} \leq C\Bigl(\|h\|_{H^{d_{n-1}}(\R^{n-1})}\|u\|_{H^m(\R^n)} + \|h\|_{H^{m+d_{n-1}}(\R^{n-1})}\|u\|_{L^2(\R^n)}\Bigr).
  \]
\end{lemma}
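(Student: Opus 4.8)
The plan is to reduce the mixed-variable statement to the standard Gagliardo--Nirenberg/Moser-type interpolation inequality and then apply it after a partial Sobolev embedding in the $x_1$-variable. First I would observe that it suffices to treat the case where the $D^q$ falls entirely on $h$ and $D^{m-q}$ entirely on $u$; writing $D^q h\cdot D^{m-q}u$ and bounding in $L^2(\R^n)$, I would integrate first over $x'\in\R^{n-1}$ for each fixed $x_1$. Since $h=h(x')$ does not depend on $x_1$, for fixed $x_1$ the factor $D^{m-q}u(x_1,\cdot)$ is a function on $\R^{n-1}$ and we are estimating $\|(D^q h)(D^{m-q}u(x_1,\cdot))\|_{L^2(\R^{n-1})}$. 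Here the classical tame (Moser) estimate on $\R^{n-1}$ gives
\[
  \|(D^q h)(D^{m-q}v)\|_{L^2(\R^{n-1})} \leq C\bigl(\|h\|_{H^{d_{n-1}}(\R^{n-1})}\|v\|_{H^{m}(\R^{n-1})} + \|h\|_{H^{m+d_{n-1}}(\R^{n-1})}\|v\|_{L^2(\R^{n-1})}\bigr)
\]
for any $v\in\CIc(\R^{n-1})$, with $d_{n-1}=\lceil\frac{n}{2}\rceil$ the smallest integer exceeding $\frac{n-1}{2}$ (this is exactly the threshold for $H^{d_{n-1}}(\R^{n-1})\hookrightarrow L^\infty$).

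Applying this with $v=D^{m-q}u(x_1,\cdot)$ for $q\geq 1$ (the case $q=0$ being trivial, as then one just uses $H^{d_{n-1}}\hookrightarrow L^\infty$ on $h$ directly), and noting $\|D^{m-q}u(x_1,\cdot)\|_{H^{m}(\R^{n-1})}\lesssim \sum_{j\le m}\|D_{x'}^j u(x_1,\cdot)\|_{H^{m-q}}$ is dominated by $\|u(x_1,\cdot)\|_{H^m(\R^{n-1})}$ is not quite right since mixed derivatives appear; more carefully I would keep all $m$ derivatives and bound $\|D^{m-q}u(x_1,\cdot)\|_{H^{q}(\R^{n-1})}$, which is $\le \|u(x_1,\cdot)\|_{H^m(\R^n\text{-type norm in }x')}$ only after also allowing $x_1$-derivatives. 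The cleanest route is therefore to apply the $\R^{n-1}$-tame estimate in the form $\|(D^q h)w\|_{L^2(\R^{n-1})}\le C(\|h\|_{H^{d_{n-1}}}\|w\|_{H^{q}} + \|h\|_{H^{q+d_{n-1}}}\|w\|_{L^2})$ with $w=D^{m-q}u(x_1,\cdot)$, then square and integrate in $x_1$:
\[
  \|(D^q h)(D^{m-q}u)\|_{L^2(\R^n)}^2 \le C\Bigl(\|h\|_{H^{d_{n-1}}}^2\!\int\!\|D^{m-q}u(x_1,\cdot)\|_{H^q}^2\,dx_1 + \|h\|_{H^{q+d_{n-1}}}^2\!\int\!\|D^{m-q}u(x_1,\cdot)\|_{L^2}^2\,dx_1\Bigr).
\]
Both $x_1$-integrals are bounded by $\|u\|_{H^m(\R^n)}^2$ (the first because $D^{m-q}$ of $u$ in $H^q$ in $x'$, integrated in $x_1$, involves at most $m$ total derivatives; the second even more so), and since $q\le m$ we have $\|h\|_{H^{q+d_{n-1}}}\le\|h\|_{H^{m+d_{n-1}}}$, yielding the claimed inequality.

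The main obstacle — really the only nontrivial point — is the bookkeeping in the first term: ensuring that $\int \|D_{x'}^{m-q}u(x_1,\cdot)\|_{H^q(\R^{n-1})}^2\,dx_1 \lesssim \|u\|_{H^m(\R^n)}^2$, which holds because the integrand is a sum of $\|D_{x'}^\alpha u(x_1,\cdot)\|_{L^2(\R^{n-1})}^2$ over multi-indices with $|\alpha|\le m$ and then integrating in $x_1$ gives $\|D_{x'}^\alpha u\|_{L^2(\R^n)}^2\le\|u\|_{H^m}^2$; no $x_1$-derivatives of $u$ are actually needed, so in fact one gets the slightly stronger statement with $\|u\|_{H^m(\R^n)}$ replaced by the norm controlling only $x'$-derivatives, but the weaker stated form suffices. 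I would also remark that the standard one-variable tame estimate on $\R^{n-1}$ I invoke is itself proved by Littlewood--Paley decomposition or by iterating $\|fg\|_{H^s}\lesssim\|f\|_{H^s}\|g\|_{L^\infty}+\|f\|_{L^\infty}\|g\|_{H^s}$ together with Gagliardo--Nirenberg $\|D^j h\|_{L^{2m/j}}\lesssim\|h\|_{L^\infty}^{1-j/m}\|h\|_{H^m}^{j/m}$; this is classical (e.g. as in Hörmander's or Taylor's treatment of the Nash--Moser scheme) and I would simply cite it rather than reprove it.
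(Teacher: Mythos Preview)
Your argument has a genuine gap in the last step. After slicing in $x_1$ and applying the $\R^{n-1}$-estimate, your second term is
\[
  \|h\|_{H^{q+d_{n-1}}(\R^{n-1})}^2 \int \|D^{m-q}u(x_1,\cdot)\|_{L^2(\R^{n-1})}^2\,dx_1
  = \|h\|_{H^{q+d_{n-1}}}^2\,\|D^{m-q}u\|_{L^2(\R^n)}^2.
\]
You then bound $\|h\|_{H^{q+d_{n-1}}}\leq\|h\|_{H^{m+d_{n-1}}}$ and $\|D^{m-q}u\|_{L^2}\leq\|u\|_{H^m}$, but this yields $\|h\|_{H^{m+d_{n-1}}}\|u\|_{H^m}$, \emph{not} the claimed $\|h\|_{H^{m+d_{n-1}}}\|u\|_{L^2}$. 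The resulting bound has both high norms multiplied together and is therefore not tame; it is precisely the bad product that tame estimates are designed to avoid, and it would be useless in the Nash--Moser iteration where the lemma is applied.

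What is missing is an interpolation step: you must split $\|h\|_{H^{q+d_{n-1}}}\|D^{m-q}u\|_{L^2}$ using
\[
  \|D^{q+d_{n-1}}h\|_{L^2}\lesssim\|D^{d_{n-1}}h\|_{L^2}^{\frac{m-q}{m}}\|D^{m+d_{n-1}}h\|_{L^2}^{\frac{q}{m}},\qquad
  \|D^{m-q}u\|_{L^2}\lesssim\|u\|_{L^2}^{\frac{q}{m}}\|D^m u\|_{L^2}^{\frac{m-q}{m}},
\]
and then apply Young's inequality to the product. This is exactly what the paper does, and it does so directly from the start: since $h$ is independent of $x_1$, one simply pulls $\|D^q h\|_{L^\infty(\R^{n-1})}$ out of the $L^2(\R^n)$-norm, bounds it by $\|D^q h\|_{H^{d_{n-1}}(\R^{n-1})}$ via Sobolev embedding, and then interpolates as above. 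Your $x_1$-slicing detour and the invocation of a tame estimate on $\R^{n-1}$ are unnecessary; the entire content of the lemma is the Gagliardo--Nirenberg interpolation plus Young's inequality, which you defer to a citation and then fail to actually use where it is needed.
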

\begin{proof}
  We repeatedly use the following estimate for integers $0\leq a<b<c$:
  \begin{equation}
  \label{EqNQProdabc}
    \| D^b u \|_{L^2} \leq C_{a b c}\|D^a u\|_{L^2}^{\frac{c-b}{c-a}}\|D^c u\|_{L^2}^{\frac{b-a}{c-a}};
  \end{equation}
  this follows by an inductive argument from the base case
  \[
    \|D u\|_{L^2}^2 = \int D(u\ol{D u})\,\dd x + \int u\ol{D^2 u}\,\dd x = \int u\ol{D^2 u}\,\dd x \leq \|u\|_{L^2}\|D^2 u\|_{L^2}.
  \]

  We then estimate, using Sobolev embedding $H^{d_{n-1}}(\R^{n-1})\hra L^\infty(\R^{n-1})$,
  \begin{align*}
    \| (D^q h)(D^{m-q}u) \|_{L^2(\R^n)}^2 &\leq \|D^q h\|_{L^\infty(\R^{n-1})} \|D^{m-q}u\|_{L^2(\R^n)} \\
      &\lesssim \|D^q h\|_{H^{d_{n-1}}(\R^{n-1})} \|D^{m-q}u\|_{L^2(\R^n)}.
  \end{align*}
  Since $\|D^q h\|_{H^{d_{n-1}}}\lesssim\|D^{q+d_{n-1}}h\|_{L^2}+\|D^q h\|_{L^2}$, we can further estimate, using~\eqref{EqNQProdabc},
  \begin{align*}
    \|D^q(D^{d_{n-1}}h)\|_{L^2}\|D^{m-q}u\|_{L^2} &\lesssim \|D^{d_{n-1}}h\|_{L^2}^{\frac{m-q}{m}}\|D^{m+d_{n-1}}h\|_{L^2}^{\frac{q}{m}} \|u\|_{L^2}^{\frac{q}{m}}\|D^m u\|_{L^2}^{\frac{m-q}{m}} \\
      &\leq \|h\|_{H^{m+d_{n-1}}} \|u\|_{L^2} + \|h\|_{H^{d_{n-1}}}\|u\|_{H^m}.
  \end{align*}
  We can estimate $\|D^q h\|_{L^2}\|D^{m-q}u\|_{L^2}$ by the same right hand side.
\end{proof}

\begin{lemma}[Commutator identity]
\label{LemmaNQComm}
  Let $\cA$ be an algebra. Let $L,X_1,\ldots,X_N\in\cA$. Write $\ad_{X_j}=[-,X_j]$. Then
  \[
    [L,X_1\cdots X_N] = \sum_{q=1}^N (-1)^{q-1}\sum_{i_1<\cdots<i_q} \bigl(\ad_{X_{i_1}}\dots\ad_{X_{i_q}} L\bigr) \prod_{\genfrac{}{}{0pt}{}{j=1}{j\neq i_1,\ldots,i_q}}^N X_j.
  \]
\end{lemma}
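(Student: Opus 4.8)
The identity is purely algebraic; the plan is to prove it by induction on $N$. The base case $N=1$ reads $[L,X_1]=\ad_{X_1}L$, which is exactly the right-hand side in that case (the single term $q=1$, with empty product equal to $1$).

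For the inductive step, I would first record a companion identity obtained from the $N$-factor case by putting an arbitrary algebra element $M$ in place of $L$: since $(X_1\cdots X_N)M=M(X_1\cdots X_N)-[M,X_1\cdots X_N]$, expanding the commutator by the inductive hypothesis and moving everything to one side (which turns $(-1)^{q-1}$ into $(-1)^{q}$) gives
\[
  (X_1\cdots X_N)M=\sum_{q=0}^{N}(-1)^{q}\sum_{1\le i_1<\cdots<i_q\le N}\bigl(\ad_{X_{i_1}}\cdots\ad_{X_{i_q}}M\bigr)\prod_{\substack{1\le j\le N\\ j\ne i_1,\dots,i_q}}X_j,
\]
where the $q=0$ term is $MX_1\cdots X_N$.

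Next I would treat $N+1$ factors via the Leibniz rule for $[L,-]$,
\[
  [L,X_1\cdots X_{N+1}]=[L,X_1\cdots X_N]\,X_{N+1}+(X_1\cdots X_N)\,[L,X_{N+1}].
\]
The inductive hypothesis applied to $[L,X_1\cdots X_N]$, with the trailing factor $X_{N+1}$ absorbed into each product $\prod_{j\ne i_\ell}X_j$, yields precisely the terms of the desired $(N+1)$-factor formula whose index tuple $i_1<\cdots<i_q$ is contained in $\{1,\dots,N\}$. For the second summand I would apply the companion identity with $M=[L,X_{N+1}]=\ad_{X_{N+1}}L$; re-indexing $q\mapsto q-1$ and appending $N+1$ as the new largest index $i_q$ to each tuple (so that $\ad_{X_{i_1}}\cdots\ad_{X_{i_{q-1}}}\ad_{X_{N+1}}=\ad_{X_{i_1}}\cdots\ad_{X_{i_q}}$ and the signs become $(-1)^{q-1}$) yields exactly the remaining terms, namely those whose index tuple contains $N+1$. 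Since every $q$-tuple of indices in $\{1,\dots,N+1\}$ either contains $N+1$ or does not, the two contributions assemble into the full right-hand side for $N+1$, completing the induction.

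The only delicate point is bookkeeping: one must check that the two families of terms produced are disjoint and jointly exhaustive, and that the signs and the left-to-right order of the $\ad$-factors survive the re-indexing intact. There is no analytic content; the statement is a higher Leibniz rule and the argument is a routine induction.
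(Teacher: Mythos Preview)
Your proof is correct and is essentially the same as the paper's: both argue by induction on $N$, and your companion identity applied with $M=\ad_{X_{N+1}}L$ is exactly the paper's inductive step $[L,X_1\cdots X_{N+1}]=[L,X_1\cdots X_N]X_{N+1}+[L,X_{N+1}]X_1\cdots X_N-[\ad_{X_{N+1}}L,X_1\cdots X_N]$ (your Leibniz term $P[L,X_{N+1}]$ unpacks via $PM=MP-[M,P]$ to the paper's last two terms). The only difference is packaging.
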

\begin{proof}
  The case $N=1$ is clear. The inductive step follows from $[L,X_1\cdots X_{N+1}]=[L,X_1\cdots X_N]X_{N+1}+[L,X_{N+1}]X_1\cdots X_N-[\ad_{X_{N+1}}L,X_1\cdots X_N]$.
\end{proof}

\begin{proof}[Proof of Proposition~\usref{PropNQ}]
  \pfstep{Basic energy estimate.} For fixed $\alpha_0\in\R$, we first prove the Proposition for $m=3$ and fixed but large negative $\alpha_{\!\scri}<\alpha_0$. We use the vector field multiplier $W=w^2 V$ from~\eqref{EqNMWaveMult} with, say, $c=1$, the volume density $\ubar\mu_\bop$ from~\eqref{EqNMWaveDensity}, and a pairing calculation analogous to~\eqref{EqNMWaveComm}. Using the $L^2$ inner product on sections of $S^2\wt T^*_\Omega M\to\Omega$ relative to $\ubar\mu_\bop$ and any fixed smooth, \emph{positive definite} fiber inner product on $S^2\wt T^*M$, we shall evaluate
  \begin{align*}
    &2\Re\la w L_g u,w V u\ra = \la Q u,u\ra + \text{[boundary terms]}, \\
    &\qquad Q := [L_g,W] - (\dv_{\ubar\mu_\bop}W)L_g + (L_g^*-L_g)W,
  \end{align*}
  The first two summands of $Q$ were computed to leading order at $\scri^+$ to be equal to $\ubar Q$ in~\eqref{EqNMWaveubarQ}; the point now is that for $\alpha_{\!\scri}$ sufficiently large and negative, $\ubar Q$ dominates the principal symbol of the skew-adjoint part $(L_g^*-L_g)W\in w^2 L^\infty(\Omega)\Diffeb^2$ (using Proposition~\ref{PropNELin} and Sobolev embedding) whose bound in this space only depending on $\|h\|_{\tilde\sG^{5,(\ell_0,\ell_{\!\scri})}}$.\footnote{The boundedness of $h_{1 1}$ at $\scri^+$ comes in handy here and allows for a proof of the energy estimate without the need for using the block triangular structure of $A_g$ yet, unlike in \cite[\S4.1]{HintzVasyMink4}.} Following the proof of Proposition~\ref{PropNMWave}, this gives
  \begin{equation}
  \label{EqNQTame0}
    \|u\|_{H_\ebop^{1,(\alpha_0,2\alpha_{\!\scri})}} \leq C_0\|L_g u\|_{\Hb^{0,(\alpha_0,2\alpha_{\!\scri})}},
  \end{equation}
  with $C_0$ independent of $h$ as long as $\|h\|_{\tilde\sG^{5,(\ell_0,\ell_{\!\scri})}}$ is small. One can commute any number b-derivatives through the equation $L_g u=f$ as in the proof of Proposition~\ref{PropNMWave}; we give details in a tame setting momentarily. We content ourselves with $3$ b-derivatives for now; thus, for a constant $C_3$ only depending on $\|h\|_{\tilde\sG^{8,(\ell_0,\ell_{\!\scri})}}$, we have
  \begin{equation}
  \label{EqNQTame3}
    \|u\|_{H_{\ebop;\bop}^{(1;3),(\alpha_0,2\alpha_{\!\scri})}} \leq C_3\|L_g u\|_{\Hb^{3,(\alpha_0,2\alpha_{\!\scri})}}.
  \end{equation}

  \pfstep{Tame estimate.} We shall localize to small neighborhoods of $\scri^+$ whenever convenient below; proofs of tame estimates away from $\scri^+$ follow from simplifications of the arguments below. Recall from the proof of Proposition~\ref{PropNMWave} the set of commutators
  \[
    \cX = \{ X_0,\ldots,X_5 \} \subset \Diffb^1(M;S^2\wt T^*M)
  \]
  given by $X_0=\rho_0\pa_{\rho_0}$, $X_1=\rho_1\pa_{\rho_1}$, spherical vector fields $X_2,X_3,X_4\in\cV(\Sph^2)$ (acting by covariant differentiation on spherical 1-forms and symmetric 2-tensors in the splitting~\eqref{EqNMVbSplit}), and $X_5\equiv 1$. The estimate~\eqref{EqNQTame0} and Lemma~\ref{LemmaNQComm} applied to $V_1\cdots V_l u$ for $l\leq m$ and $V_1,\ldots,V_l\in\cX$ give
  \begin{align*}
    &\|u\|_{H_{\ebop;\bop}^{(1;m),(\alpha_0,2\alpha_{\!\scri})}} \\
    &\quad \leq C_0\Bigl(\|L_g u\|_{\Hb^{m,(\alpha_0,2\alpha_{\!\scri})}} \\
    &\quad\quad + \sum_{l\leq m}\sum_{V_1,\ldots,V_l\in\cX}\sum_{q=1}^l\sum_{i_1<\cdots<i_q} \| (\ad_{V_{i_1}}\cdots\ad_{V_{i_q}} L_g) V_1\ldots\wh{V_{i_1}}\ldots\wh{V_{i_q}}\ldots V_l u\|_{\Hb^{0,(\alpha_0,2\alpha_{\!\scri})}}\Bigr),
  \end{align*}
  which we schematically write as
  \begin{equation}
  \label{EqNQTameSchema}
    \|u\|_{H_{\ebop;\bop}^{(1;m),(\alpha_0,2\alpha_{\!\scri})}} \leq C_0\Bigl(\|L_g u\|_{\Hb^{m,(\alpha_0,2\alpha_{\!\scri})}} + \sum_{q\leq l\leq m}\|(\ad_\cX^q L_g)\cX^{l-q}u\|_{\Hb^{0,(\alpha_0,2\alpha_{\!\scri})}}\Bigr).
  \end{equation}
  
  Consider first the contributions from $\tilde L_g$ to~\eqref{EqNQTameSchema}. We can write $\tilde L_g$ in~\eqref{EqNELinError} as
  \begin{equation}
  \label{EqNQTameTildeLStruct}
    \tilde L_g=\sum (x_{\!\scri} a_j+\tilde a_j)P_j,\qquad a_j\in\CI,\quad \tilde a_j\in\Hb^{k-2,(\ell_0,2\ell_{\!\scri})},
  \end{equation}
  where the operators $P_j\in\Diffeb^2(M;S^2\wt T^*M)$ span $\Diffeb^2(M;S^2\wt T^*M)$ over $\CI(M)$, and so that, for some constant $C=C(k,\ell_0,\ell_{\!\scri})$,
  \begin{equation}
  \label{EqNQTameTildeL}
    \|\tilde L_g\|_{k-2,(\ell_0,2\ell_{\!\scri})} := \max_j\|\tilde a_j\|_{\Hb^{k-2,(\ell_0,2\ell_{\!\scri})}}\leq C\|h\|_{\tilde\sG^{k,(\ell_0,\ell_{\!\scri})}};
  \end{equation}
  this uses: \begin{enumerate*} \item the coefficients of $\tilde L_g$ are rational functions of up to $2$ b-derivatives of $h$; \item for $k\geq 5$, $\Hb^{k-2}(\Omega)$ is an algebra, with a Moser estimate for the norm of products (which is a consequence of the corresponding result on $\R^n$, see e.g.\ \cite[\S13, Proposition~3.7]{TaylorPDE3}, upon passing to coordinates $\log\rho_0$ and $\log\rho_{\!\scri}$).\end{enumerate*}

  We will use the fact that commutators with elements $X_j\in\cX$ preserve the space $\Diffeb^k(M;S^2\wt T^*M)$ for all $k$; this is clear for $j=0,1,5$ (in which case $X_j$ is itself an eb-operator), and for spherical vector fields ($j=2,3,4$) relies on their $\rho_0$-independence, as discussed in~\eqref{EqNMWaveCommOmega} and \eqref{EqNMWaveCommOmega2}. Thus, for $1\leq q\leq l\leq m$, we have (using~\eqref{EqNQTameTildeLStruct})
  \begin{align}
    &\|(\ad_\cX^q\tilde L_g)\cX^{l-q}u\|_{\Hb^{0,(\alpha_0,2\alpha_{\!\scri})}} \nonumber\\
    &\quad\leq \|x_{\!\scri}\sL_\ebop^2\sL_\bop^{l-q}u\|_{\Hb^{0,(\alpha_0,2\alpha_{\!\scri})}} + \| (\sL_\bop^q\tilde a)\sL_\ebop^2 \sL_\bop^{l-q}u\|_{\Hb^{0,(\alpha_0,2\alpha_{\!\scri})}} \nonumber\\
  \label{EqNQTameTildeLest}
    &\quad\leq C_m\|u\|_{H_{\ebop;\bop}^{(1;m),(\alpha_0,2\alpha_{\!\scri}-1)}} + \| (\sL_\bop^q\tilde a)(\sL_\ebop^1\sL_\bop^{m-q+1}u)\|_{\Hb^{0,(\alpha_0,2\alpha_{\!\scri})}},
  \end{align}
  where we write $\sL_\bop^a$ and $\sL_\ebop^b$ for elements of $\Diffb^a$ and $\Diffeb^b$, respectively, whose precise forms do not matter; and in passing to the second line, we used $\Diffeb^1\subset\Diffb^1$. The second term of~\eqref{EqNQTameTildeLest} is $\leq\|\sL_\bop^q\tilde a\|_{\rho_0^{\ell_0}x_{\!\scri}^{2\ell_{\!\scri}}L^\infty}\|u\|_{H_{\ebop;\bop}^{(1;m-q+1),(\alpha_0-\ell_0,2(\alpha_{\!\scri}-\ell_{\!\scri}))}}$; due to the weaker weight at $\scri^+$, we can conclude that upon working in a sufficiently small neighborhood of $\scri^+$, this is bounded by a small constant times $\|u\|_{H_{\ebop;\bop}^{(1;m),(\alpha_0,2\alpha_{\!\scri})}}$ and can thus be absorbed into the left hand side of~\eqref{EqNQTameSchema}; similarly for the first term. To get a \emph{tame} estimate, we use~\cite[\S13, Proposition~3.6]{TaylorPDE3} and Sobolev embedding~\eqref{EqNDSobEmb} to bound the second term in~\eqref{EqNQTameTildeLest} by
  \begin{align*}
    &C_m\Bigl(\|\sL_\bop^1\tilde a\|_{\rho_0^{\ell_0}x_{\!\scri}^{2\ell_{\!\scri}}L^\infty} \|\sL_\ebop^1 u\|_{\Hb^{m,(\alpha_0-\ell_0,2(\alpha_{\!\scri}-\ell_{\!\scri}))}} + \|\sL_\bop^1\tilde a\|_{\Hb^{m,(\ell_0,2\ell_{\!\scri})}}\|\sL_\ebop^1 u\|_{\rho_0^{\alpha_0}x_{\!\scri}^{2\alpha_{\!\scri}}L^\infty}\Bigr) \\
    &\quad\leq C_m'\Bigl(\|\tilde a\|_{\Hb^{4,(\ell_0,2\ell_{\!\scri})}}\|u\|_{H_{\ebop;\bop}^{(1;m),(\alpha_0,2(\alpha_{\!\scri}-\ell_{\!\scri}))}} + \|\tilde a\|_{\Hb^{m+1,(\ell_0,2\ell_{\!\scri})}}\|u\|_{H_{\ebop;\bop}^{(1;3),(\alpha_0,2\alpha_{\!\scri})}}\Bigr) \\
    &\quad\leq C_m''\Bigl(\|h\|_{\tilde\sG^{6,(\ell_0,\ell_{\!\scri})}}\|u\|_{H_{\ebop;\bop}^{(1;m),(\alpha_0,2(\alpha_{\!\scri}-\ell_{\!\scri}))}} + \|h\|_{\tilde\sG^{m+3,(\ell_0,\ell_{\!\scri})}}\|L_g u\|_{\Hb^{3,(\alpha_0,2\alpha_{\!\scri})}}\Bigr),
  \end{align*}
  where in passing to the final line we used the estimates~\eqref{EqNQTameTildeL} and~\eqref{EqNQTame3}. The first term only involves a fixed low regularity norm of $h$, and upon localizing to a sufficiently small (only depending on $m$) neighborhood of $\scri^+$ can be absorbed into the left hand side of~\eqref{EqNQTameSchema}. The second term already fits into the estimate~\eqref{EqNQTame}.

  Next, we decompose the main term $L_g^0$ of $L_g$ in~\eqref{EqNELinMain} into $L_{g_\bhm}^0+(L_g^0-L_{g_\bhm}^0)$, with the first term capturing the smooth terms and the second term capturing the terms involving $h$. Using $[L_{g_\bhm}^0,X_i]\in x_{\!\scri}\Diff_{\ebop;\bop}^{1,1}$ as in Lemma~\ref{LemmaNMOp}, the contribution of $L_{g_\bhm}^0$ to the second summand on the right in~\eqref{EqNQTameSchema} can be estimated by $C_m\|x_{\!\scri} u\|_{H_{\ebop;\bop}^{(1;m),(\alpha_0,2\alpha_{\!\scri})}}=C_m\|u\|_{H_{\ebop;\bop}^{(1;m),(\alpha_0,2\alpha_{\!\scri}-1)}}$, which can again be absorbed into the left hand side of~\eqref{EqNQTameSchema} for small $x_{\!\scri}$.

  Turning to the term $A_h\sL_\ebop^1$ of $L_g^0-L_{g_\bhm}^0$, where $A_h:=A_g-A_{g_\bhm}$, we decompose $A_h=A_h^0+\tilde A_h$ into the $\rho_{\!\scri}$-independent leading order term $A_h^0\in\Hb^{k-1,1+\ell_0}(\scri^+\cap\Omega)$ plus a remainder term $\tilde A_h\in\Hb^{k-1,(1+\ell_0,2\ell_{\!\scri})}(\Omega)$. The contribution from $\tilde A_h$ to the second term on the right in~\eqref{EqNQTameSchema} can be treated like the contribution from $\tilde L_g$. For the contribution from $A_h^0$, which is linear in $\pa_1\slh^{(0)}$ in the notation of Definition~\ref{DefNPMetrics}, we apply Lemma~\ref{LemmaNQProd} in logarithmic coordinates $\log\rho_0,\log\rho_{\!\scri}$ (with $\log\rho_{\!\scri}$ playing the role of $x_1$ in the lemma) with $n=4$, and $h,m$ there replaced by $\sL_\bop^1\pa_1\slh^{(0)},l-1$ where $l\leq m$, so
  \begin{align*}
    &\|(\ad_\cX^q A_h^0)(\cX^{l-q}\sL_\ebop^1 u)\|_{\Hb^{0,(\alpha_0,2\alpha_{\!\scri})}} = \|(\ad_\cX^{q-1}\ad_\cX A_h^0)(\cX^{l-1-(q-1)}\sL_\ebop^1 u)\|_{\Hb^{0,(\alpha_0,2\alpha_{\!\scri})}} \\
    &\quad \leq C_m\Bigl(\|\sL_\bop^1\pa_1\slh^{(0)}\|_{\Hb^{2,1+\ell_0}(\scri^+\cap\Omega)}\|\sL_\ebop^1 u\|_{\Hb^{m-1,(\alpha_0-1-\ell_0,2\alpha_{\!\scri})}(\Omega)} \\
    &\quad\hspace{3em} + \|\sL_\bop^1\pa_1\slh^{(0)}\|_{\Hb^{m+1,1+\ell_0}(\scri^+\cap\Omega)} \|\sL_\ebop^1 u\|_{\Hb^{0,(\alpha_0-1-\ell_0,2\alpha_{\!\scri})}(\Omega)}\Bigr) \\
    &\quad \leq C_m'\Bigl( \|h\|_{\tilde\sG^{4,(\ell_0,\ell_{\!\scri})}}\|u\|_{H_{\ebop;\bop}^{(1;m-1),(\alpha_0,2\alpha_{\!\scri})}} + \|h\|_{\tilde\sG^{m+3,(\ell_0,\ell_{\!\scri})}}\|u\|_{H_\ebop^{1,(\alpha_0,2\alpha_{\!\scri})}}\Bigr),
  \end{align*}
  We can then use~\eqref{EqNQTame0} to bound the second term on the right. The contribution from $B_g$ to the right hand side of~\eqref{EqNQTameSchema} is analyzed similarly. This finishes the proof of~\eqref{EqNQTame}.

  \pfstep{Estimate with sharp weights.} $A_g$ is lower triangular in the bundle splitting $S^2\wt T^*M=\ran(\pi^{\cC\Ups})\oplus\ran\slpi_0\oplus\ran\pi_{1 1}$, with scalar diagonal entries that are independent of $h$; see~\eqref{EqNYA} below for the explicit expression.\footnote{For $g=g_\bhm$, a simpler version of this is~\eqref{EqLApiC}--\eqref{EqLApiCcompl}.} We may thus choose a positive definite fiber inner product on $S^2\wt T^*M$ with respect to which the skew-adjoint part of $A_g$ is as small as we like along $\scri^+$ in $\rho_0^{1+\ell_0}L^\infty(\scri^+\cap\Omega)$ (using only that $\|h\|_{\tilde\sG^{3,(\ell_0,\ell_{\!\scri})}}\lesssim 1$). The calculation~\eqref{EqNMWaveubarQ} thus shows that the condition $\alpha_{\!\scri}<\min(\alpha_0,0)$ suffices to obtain the estimate~\eqref{EqNQTame}.
\end{proof}

\subsection{Recovery of decay; proof of nonlinear stability}
\label{SsNY}

In the splitting $S^2\wt T^*M=\ran(\pi^\cC+\pi^\Ups)\oplus\ran\slpi_0\oplus\ran\pi_{1 1}$, the endomorphisms $A_g$ and $B_g$ from Proposition~\ref{PropNELin} are
\begin{subequations}
\begin{align}
\label{EqNYA}
  &A_g=\begin{pmatrix}
         A^{\cC\Ups} & 0 & 0 \\
         \slA^{\cC\Ups} & 0 & 0 \\
         A_{1 1}^{\cC\Ups} & \slA_{1 1} & A_{1 1}
       \end{pmatrix}, \\
  &\ A^{\cC\Ups}=\begin{pmatrix}
              2\gamma^\cC & 0 & 0 & 0 & 0 \\
              0 & \gamma^\cC & 0 & 0 & 0 \\
              2\gamma^\cC & 0 & \gamma^\cC & 0 & 0 \\
              -\gamma^\Ups & 0 & 0 & -\gamma^\Ups & 0 \\
              0 & \gamma^\cC{-}\gamma^\Ups & 0 & 0 & -\gamma^\Ups
            \end{pmatrix},\qquad
     A_{1 1}=-2\gamma^\Ups, \nonumber \\
  &\ \slA^{\cC\Ups} = (2\pa_1 h_{\bar a\bar b},0,0,0,0), \qquad A_{1 1}^{\cC\Ups} = (0,0,\gamma^\cC,-2\gamma^\Ups,0),\qquad
  \slA_{1 1} = -\half\pa_1 h^{\bar a\bar b}, \nonumber \\
\label{EqNYB}
  &B_g=\begin{pmatrix} 0 & 0 & 0 \\ \slB^{\cC\Ups} & 0 & 0 \\ B_{1 1}^{\cC\Ups} & 0 & 0 \end{pmatrix},\qquad
  \slB^{\cC\Ups} = (2\rho_0^{-1}\pa_1^2 h_{\bar a\bar b},0,0,0,0),\quad
  B_{1 1}^{\cC\Ups} = (2\rho_0^{-1}\pa_1^2 h_{1 1},0,0,0,0).
\end{align}
\end{subequations}

\begin{thm}[Tame estimate with sharp decay]
\label{ThmNY}
  Fix $\ell_0,\ell_{\!\scri}$ as in Definition~\usref{DefNPMetrics}. Let $k,m\in\N_0$ with $k\geq m+11$. Let $g=g_\bhm+r^{-1}h\in\sG^{k,(\ell_0,\ell_{\!\scri})}$, with $h$ small in $\tilde\sG^{8,(\ell_0,\ell_{\!\scri})}$. Consider $f\in\sF^{m+8,(\ell_0,\ell_{\!\scri})}$ (see Definition~\usref{DefNEForc}) which vanishes near $\Sigma$. Then the unique forward solution $u$ of $L_g u=f$ satisfies $u\in\tilde\sG^{m,(\ell_0,\ell_{\!\scri})}$ and a tame estimate
  \begin{equation}
  \label{EqNYEst}
    \|u\|_{\tilde\sG^{m,(\ell_0,\ell_{\!\scri})}} \leq C\Bigl(\|f\|_{\sF^{m+8,(\ell_0,\ell_{\!\scri})}} + \|h\|_{\tilde\sG^{m+11,(\ell_0,\ell_{\!\scri})}}\|f\|_{\sF^{3,(\ell_0,\ell_{\!\scri})}}\Bigr).
  \end{equation}
\end{thm}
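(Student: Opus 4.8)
The plan is to obtain the forward solution and its b-regularity from the energy estimate of Proposition~\ref{PropNQ}, and then to upgrade the behavior of the solution at $\scri^+$ to membership in $\tilde\sG^{m,(\ell_0,\ell_{\!\scri})}$ by a normal-operator (regular--singular ODE) analysis in $\rho_{\!\scri}$ that exploits the block-lower-triangular structure of $A_g$ in~\eqref{EqNYA}. Since $f=\tilde f+f_{1 1}^{(0)}(\dd x^1)^2\in\sF^{m+8,(\ell_0,\ell_{\!\scri})}$, fixing $\alpha_0=\ell_0$ and $\alpha_{\!\scri}<\min(0,\ell_{\!\scri})$ we have $f\in\Hb^{m+8,(\ell_0,2\alpha_{\!\scri})}(\Omega;S^2\wt T^*M)$ with $\|f\|_{\Hb^{m+8,(\ell_0,2\alpha_{\!\scri})}}\lesssim\|f\|_{\sF^{m+8,(\ell_0,\ell_{\!\scri})}}$ (the leading term $f_{1 1}^{(0)}(\dd x^1)^2$ is only bounded, not decaying, at $\scri^+$, which is why we allow $\alpha_{\!\scri}<0$). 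As $k\geq m+11$ gives $m+8\leq k-3$, Proposition~\ref{PropNQ} produces the unique forward solution $u\in H_{\ebop;\bop}^{(1;m+8),(\ell_0,2\alpha_{\!\scri})}(\Omega;S^2\wt T^*M)$ together with the tame bound
\[
  \|u\|_{H_{\ebop;\bop}^{(1;m+8),(\ell_0,2\alpha_{\!\scri})}}\leq C\bigl(\|f\|_{\sF^{m+8,(\ell_0,\ell_{\!\scri})}}+\|h\|_{\tilde\sG^{m+11,(\ell_0,\ell_{\!\scri})}}\|f\|_{\sF^{3,(\ell_0,\ell_{\!\scri})}}\bigr).
\]
Existence, uniqueness and b-regularity are thereby settled; it remains to analyze the expansion of $u$ at $\scri^+$.

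Write $L_g=L_g^0+\tilde L_g$ as in Proposition~\ref{PropNELin} and work in the splitting $S^2\wt T^*M=\ran\pi^{\cC\Ups}\oplus\ran\slpi_0\oplus\ran\pi_{1 1}$, in which $A_g$, $B_g$ are given by~\eqref{EqNYA}--\eqref{EqNYB}. At every stage the error operator $\tilde L_g u$ gains an $x_{\!\scri}$ (or an $\Hb^{k-2,(\ell_0,2\ell_{\!\scri})}$ coefficient) over $u$, the term $x_{\!\scri}^2\slDelta u$ likewise gains, and the coupling coefficients $\slA^{\cC\Ups},\slA_{1 1},B_g$ are built from $\pa_1 h$, $\pa_1^2 h$ and hence have improved $\scri^+$-decay (since $\rho_{\!\scri}\pa_{\rho_{\!\scri}}$ annihilates the leading terms of $h$, exactly as in the proof of Corollary~\ref{CorNENonlin}). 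One then improves the three components of $u$ in the order forced by the triangular structure:
\begin{enumerate}
\item $\pi^{\cC\Ups}u$ decouples modulo the above errors; its $\rho_{\!\scri}$-indicial operator is governed by $A^{\cC\Ups}$, whose eigenvalues $\gamma^\cC,2\gamma^\cC,-\gamma^\Ups$ all exceed $\ell_{\!\scri}$ (using $\ell_{\!\scri}<\min(-\gamma^\Ups,\ell_0,\half)$ and $-\gamma^\Ups<\gamma^\cC$). Regular--singular ODE analysis in $\rho_{\!\scri}$, as in~\S\ref{SL} and carried out rigorously in~\cite[\S3.3]{HintzVasyMink4}, then improves $\pi^{\cC\Ups}u$ from $\rho_{\!\scri}^{\alpha_{\!\scri}}$ to $\Hb^{m,(\ell_0,2\ell_{\!\scri})}(\Omega)$, after commuting $\rho_0\pa_{\rho_0},\rho_{\!\scri}\pa_{\rho_{\!\scri}},\cV(\Sph^2)$ through the equation to keep $m$ b-derivatives.
\item $\slpi_0 u$ has vanishing indicial block, so integrating its transport equation along $\rho_{\!\scri}\pa_{\rho_{\!\scri}}$ from $\scri^+$ -- with source $\slpi_0 f$ and the coupling to the already controlled $\pi^{\cC\Ups}u$ via $\slA^{\cC\Ups},\slB^{\cC\Ups}$ -- produces the radiation field $\slh^{(0)}\in\Hb^{m,\ell_0}(\scri^+\cap\Omega)$ with $\slpi_0 u-\slh^{(0)}\in\Hb^{m,(\ell_0,2\ell_{\!\scri})}(\Omega)$.
\item $\pi_{1 1}u$ has indicial root $A_{1 1}=-2\gamma^\Ups$, which exceeds $\ell_{\!\scri}$, but is forced at order $\rho_{\!\scri}^0$ by $f_{1 1}^{(0)}$ and by the coupling terms $A_{1 1}^{\cC\Ups}(\pi^{\cC\Ups}u),\slA_{1 1}(\slpi_0 u),B_{1 1}^{\cC\Ups}(\pi^{\cC\Ups}u)$; solving the corresponding regular--singular ODE -- whose $\rho_{\!\scri}^0$-leading part reduces to an ODE in $\rho_0$ along $\scri^+$, and which is \emph{free of logarithms} precisely because $A_{1 1}>0$, cf.\ Remark~\ref{RmkNPMink4} -- yields $h_{1 1}^{(0)}\in\Hb^{m,\ell_0}(\scri^+\cap\Omega)$ with $\pi_{1 1}u-h_{1 1}^{(0)}(\dd x^1)^2\in\Hb^{m,(\ell_0,2\ell_{\!\scri})}(\Omega)$.
\end{enumerate}
Collecting these memberships gives $u\in\tilde\sG^{m,(\ell_0,\ell_{\!\scri})}$.

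For the tame estimate, observe that each improvement above is an invertibility statement for a model operator with \emph{smooth} coefficients composed with the rough estimate; the $h$-dependent parts of $\tilde L_g$, of $A_g-A_{g_\bhm}$, $B_g-B_{g_\bhm}$ and of the coupling coefficients enter linearly and are handled by the Moser/tame product estimates already used in the proof of Proposition~\ref{PropNQ} (Lemma~\ref{LemmaNQProd}, \cite[\S13, Propositions~3.6--3.7]{TaylorPDE3}, Sobolev embedding~\eqref{EqNDSobEmb}). At the top regularity level one thus picks up a term $\|h\|_{\tilde\sG^{\mathrm{low}}}\|u\|_{H_{\ebop;\bop}^{(1;m),(\alpha_0,2\alpha_{\!\scri})}}$, absorbable upon localizing to a small neighborhood of $\scri^+$ (where the $\scri^+$-weight is slightly better, as in the proof of Proposition~\ref{PropNQ}), plus a term $\|h\|_{\tilde\sG^{m+11,(\ell_0,\ell_{\!\scri})}}$ times a fixed low-regularity norm of $u$; feeding in the bound displayed above for the latter, together with the analogous low-regularity bound for $f$, yields~\eqref{EqNYEst}. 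The budget $k\geq m+11$ accommodates the finite derivative loss -- three for the energy estimate, the remainder for the commutations and for controlling coefficients at the regularity demanded by the coupling and by the ODE analysis.

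The technical heart, and main obstacle, is making the ``modulo error'' reductions above rigorous: one must check that at each stage every error and coupling term decays strictly faster than the target weight ($\rho_{\!\scri}^{\ell_{\!\scri}}$, resp.\ faster than the leading order $\rho_{\!\scri}^0$), even though the rough solution from Proposition~\ref{PropNQ} carries only one eb-derivative and may grow mildly at $\scri^+$. This is what dictates the precise order $\pi^{\cC\Ups}u\to\slpi_0 u\to\pi_{1 1}u$ and what makes essential the gain of $\scri^+$-decay under $\pa_1$ (equivalently $\rho_{\!\scri}\pa_{\rho_{\!\scri}}$) acting on leading-order terms -- the same mechanism responsible for the absence of the logarithm in $\pi_{1 1}u$ and for Corollary~\ref{CorNENonlin}.
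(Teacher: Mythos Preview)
Your proposal is correct and follows essentially the same strategy as the paper: apply Proposition~\ref{PropNQ} with $\alpha_0=\ell_0$ and a negative $\alpha_{\!\scri}$, then use the normal-operator equation together with the block-lower-triangular structure of $A_g$ in~\eqref{EqNYA} to improve decay at $\scri^+$ component by component in the order $\pi^{\cC\Ups}u\to\slpi_0 u\to\pi_{1 1}u$. The paper carries this out via an explicit \emph{two-round} bootstrap (first improving the $\scri^+$-weight from $\alpha_{\!\scri}\in(-\ell_{\!\scri},0)$ to $\alpha_{\!\scri}+\ell_{\!\scri}$, then to $\ell_{\!\scri}$), carefully tracking the derivative losses $m+8\to m+6\to m+5\to m+4\to m+2\to m+1\to m$ at each step---bookkeeping your sketch compresses into a single pass but which is actually required, since the error operator $L_g^\flat$ only gains $\ell_{\!\scri}$ in the $\scri^+$-weight per application.
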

\begin{proof}
  For $\alpha_0=\ell_0$ and $\alpha_{\!\scri}\in(-\ell_{\!\scri},0)$, we can apply Proposition~\ref{PropNQ} to obtain $u\in H_{\ebop;\bop}^{(1;m+8),(\ell_0,2\alpha_{\!\scri})}(\Omega)$ satisfying the estimate~\eqref{EqNQTame} with $m+8$ in place of $m$. Write
  \[
    L_g=-2(\rho_{\!\scri}\pa_{\rho_{\!\scri}}-A_g)(\rho_0\pa_{\rho_0}-\rho_{\!\scri}\pa_{\rho_{\!\scri}})+2 B_g+L_g^\flat,\qquad L_g^\flat\in(x_{\!\scri}\CI(\Omega)+\Hb^{k-2,(\ell_0,2\ell_{\!\scri})}(\Omega))\Diffb^2,
  \]
  where spherical derivatives are error terms since we work in the b-setting now. We now use
  \begin{equation}
  \label{EqNYNormalEq}
    2(\rho_{\!\scri}\pa_{\rho_{\!\scri}}-A_g)(\rho_0\pa_{\rho_0}-\rho_{\!\scri}\pa_{\rho_{\!\scri}})u = f + 2 B_g u + L_g^\flat u
  \end{equation}
  repeatedly, together with the spectral information on $A_g$ given in~\eqref{EqNYA}, to prove sharp decay for the various components of $h$ at $\scri^+$.

  \pfstep{First improvement.} Applying $\pi^{\cC\Ups}$ to~\eqref{EqNYNormalEq}, we get
  \begin{align*}
    (\rho_{\!\scri}\pa_{\rho_{\!\scri}}-A^{\cC\Ups})(\rho_0\pa_{\rho_0}-\rho_{\!\scri}\pa_{\rho_{\!\scri}})(\pi^{\cC\Ups} u) &\in \Hb^{m+8,(\ell_0,2\ell_{\!\scri})} + \Hb^{m+6,(\ell_0,2(\alpha_{\!\scri}+\ell_{\!\scri}))} \\
      &\subset \Hb^{m+6,(\ell_0,2(\alpha_{\!\scri}+\ell_{\!\scri}))}.
  \end{align*}
  Definition~\ref{DefNPMetrics} ensures that all eigenvalues of $A^{\cC\Ups}$ are $>\ell_{\!\scri}$. Thus, we get improved decay $(\rho_0\pa_{\rho_0}-\rho_{\!\scri}\pa_{\rho_{\!\scri}})(\pi^{\cC\Ups}u)\in\Hb^{m+6,(\ell_0,2(\alpha_{\!\scri}+\ell_{\!\scri}))}$ at the cost of $2$ b-derivatives. Integrating this from $\Sigma$ (see \cite[Lemma~7.7(1)]{HintzVasyMink4})  and using that $\alpha_{\!\scri}+\ell_{\!\scri}<\ell_{\!\scri}<\ell_0$ gives
  \begin{equation}
  \label{EqNYuCY}
    \pi^{\cC\Ups}u\in\Hb^{m+6,(\ell_0,2(\alpha_{\!\scri}+\ell_{\!\scri}))}.
  \end{equation}

  Applying $\slpi_0$ to~\eqref{EqNYNormalEq} and using~\eqref{EqNYuCY} to estimate the contributions from $\slA^{\cC\Ups}(\rho_0\pa_{\rho_0}-\rho_{\!\scri}\pa_{\rho_{\!\scri}})$ and $\slB^{\cC\Ups}$, we obtain
  \[
    (\rho_0\pa_{\rho_0}-\rho_{\!\scri}\pa_{\rho_{\!\scri}})\rho_{\!\scri}\pa_{\rho_{\!\scri}}(\slpi_0 u) \in \Hb^{m+6,(\ell_0,2(\alpha_{\!\scri}+\ell_{\!\scri}))} + \Hb^{m+5,(1+2\ell_0,2(\alpha_{\!\scri}+\ell_{\!\scri}))} \subset \Hb^{m+5,(\ell_0,2(\alpha_{\!\scri}+\ell_{\!\scri}))}.
  \]
  Integrating $\rho_0\pa_{\rho_0}-\rho_{\!\scri}\pa_{\rho_{\!\scri}}$ gives $\rho_{\!\scri}\pa_{\rho_{\!\scri}}(\slpi_0 u)\in\Hb^{m+5,(\ell_0,2(\alpha_{\!\scri}+\ell_{\!\scri}))}$ and therefore
  \begin{equation}
  \label{EqNYslu}
    \slpi_0 u=\slu^{(0)}+\tilde\slu,\qquad \slu^{(0)}\in\Hb^{m+5,\ell_0}(\scri^+\cap\Omega),\quad \tilde\slu\in\Hb^{m+5,(\ell_0,2(\alpha_{\!\scri}+\ell_{\!\scri}))}.
  \end{equation}

  Lastly, we apply $\pi_{1 1}$ to~\eqref{EqNYNormalEq} and use~\eqref{EqNYuCY}--\eqref{EqNYslu}, and note that $\slpi_0 u$ is coupled to $\pi_{1 1}u$ via $\slA_{1 1}\in\Hb^{k-1,1+\ell_0}(\scri^+\cap\Omega)$ to obtain
  \[
    (\rho_{\!\scri}\pa_{\rho_{\!\scri}}-A_{1 1})(\rho_0\pa_{\rho_0}-\rho_{\!\scri}\pa_{\rho_{\!\scri}})(\pi_{1 1}u) \in \Hb^{m+4,1+2\ell_0}(\scri^+\cap\Omega) + \Hb^{m+4,(\ell_0,2(\alpha_{\!\scri}+\ell_{\!\scri}))}(\Omega).
  \]
  Since $A_{1 1}=-2\gamma^\Ups>\ell_{\!\scri}>\alpha_{\!\scri}+\ell_{\!\scri}$, integration of this implies
  \begin{equation}
  \label{EqNYu11}
    \pi_{1 1}u = u_{1 1}^{(0)} + \tilde u_{1 1},\qquad
    u_{1 1}^{(0)} \in \Hb^{m+4,1+2\ell_0}(\scri^+\cap\Omega),\quad
    \tilde u_{1 1} \in \Hb^{m+4,(\ell_0,2(\alpha_{\!\scri}+\ell_{\!\scri}))}(\Omega).
  \end{equation}

  \pfstep{Second improvement.} We again apply $\pi^{\cC\Ups}$ to~\eqref{EqNYNormalEq}; exploiting the sharper (as far as decay is concerned) information~\eqref{EqNYuCY}--\eqref{EqNYu11}, we now get
  \[
    (\rho_{\!\scri}\pa_{\rho_{\!\scri}}-A^{\cC\Ups})(\rho_0\pa_{\rho_0}-\rho_{\!\scri}\pa_{\rho_{\!\scri}})(\pi^{\cC\Ups}u) \in \Hb^{m+8,(\ell_0,2\ell_{\!\scri})} + \Hb^{m+2,(\ell_0,2\ell_{\!\scri})},
  \]
  with the second term coming from the second order operator $L_g^\flat$ acting on $\slpi_0 u$, $\pi_{1 1}u$. Integrating this gives $\pi^{\cC\Ups}u\in\Hb^{m+2,(\ell_0,2\ell_{\!\scri})}$. For $\slpi_0 u$, this improved information gives
  \[
    \rho_{\!\scri}\pa_{\rho_{\!\scri}}(\rho_0\pa_{\rho_0}-\rho_{\!\scri}\pa_{\rho_{\!\scri}})(\slpi_0 u) \in \Hb^{m+1,(\ell_0,2\ell_{\!\scri})},
  \]
  which implies that $\tilde\slu\in\Hb^{m+1,(\ell_0,2\ell_{\!\scri})}$ in~\eqref{EqNYslu}. This in turn gives
  \[
    (\rho_{\!\scri}\pa_{\rho_{\!\scri}}-A_{1 1})(\rho_0\pa_{\rho_0}-\rho_{\!\scri}\pa_{\rho_{\!\scri}})(\pi_{1 1}u) \in \Hb^{m+4,1+2\ell_0}(\scri^+\cap\Omega) + \Hb^{m,(\ell_0,2\ell_{\!\scri})}(\Omega),
  \]
  and hence $\tilde u_{1 1}\in\Hb^{m,(\ell_0,2\ell_{\!\scri})}(\Omega)$ in~\eqref{EqNYu11}. This demonstrates that $u\in\sG^{m,(\ell_0,\ell_{\!\scri})}$. The tame estimate~\eqref{EqNYEst} follows from that in Proposition~\ref{PropNQ} together with tame estimates for products, as already exploited in the proof of Proposition~\ref{PropNQ}.
\end{proof}

\begin{cor}[Nonlinear stability near the far end]
\label{CorNYStab}
  Let $\Omega$ and $\Sigma$ be as in Definition~\usref{DefNPMetrics} and Lemma~\usref{LemmaNPCausal}, and consider the quasilinear wave operator $P(g)$ from Definition~\usref{DefNEOp}. Let $\ell_0>0$, and let $\ell_{\!\scri}\in(0,\min(\ell_0,\half))$. Suppose $h_0,h_1\in\Hb^{\infty,\ell_0}(\Sigma;S^2\wt T^*M)$; putting $\|(h_0,h_1)\|_m:=\|h_0\|_{\Hb^{m+1,\ell_0}}+\|h_1\|_{\Hb^{m,\ell_0}}$, assume that $(h_0,h_1)$ is small in the sense that $\|(h_0,h_1)\|_{22}<C$ where $C=C(\|(h_0,h_1)\|_{2433})$, with $C=C(q)$ positive and continuous in $q\in[0,\infty)$.\footnote{Thus, there exists $\eps>0$ so that any $(h_0,h_1)$ with $\|(h_0,h_1)\|_{2433}<\eps$ is small in this sense.} Then the initial value problem
  \begin{equation}
  \label{EqNYStabEq}
    P(g_\bhm+r^{-1}h) = 0,\qquad
    (h,\cL_{x_{\!\scri}}h)|_{\Sigma} = (h_0,h_1)
  \end{equation}
  has a unique solution $h\in\tilde\sG^{\infty,(\ell_0,\ell_{\!\scri})}$.\footnote{Recall the causal structure of $\Omega$ recorded in Lemma~\ref{LemmaNMSchwCausal}.}
  
  In particular, if the induced metric and second fundamental form of $g:=g_\bhm+r^{-1}h\in\sG^{\infty,(\ell_0,\ell_{\!\scri})}$ at $\Sigma$ satisfy the constraint equations, and $\Ups_{E^\Ups}(g;g_\bhm)=0$ at $\Sigma$, then $g$ solves the Einstein vacuum equations $\Ric(g)=0$ in the gauge $\Ups_{E^\Ups}(g;g_\bhm)=0$.
\end{cor}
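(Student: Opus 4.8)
The plan is, first, to establish existence and uniqueness of $h\in\tilde\sG^{\infty,(\ell_0,\ell_{\!\scri})}$ solving the initial value problem~\eqref{EqNYStabEq} by a Nash--Moser iteration based on the linear theory of Theorem~\ref{ThmNY} and the nonlinear error analysis of Corollary~\ref{CorNENonlin}; and second, granting the constraint and gauge conditions at $\Sigma$, to deduce $\Ric(g)=0$ by propagating the gauge $1$-form off $\Sigma$ via the second Bianchi identity.

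For the existence statement I would begin by disposing of the Cauchy data. Since $P$ is a quasilinear wave operator (its linearization is principally $\tfrac12\Box_g$; see Definition~\ref{DefNEOp} and the discussion after~\eqref{EqLPUps}), one can solve recursively for the Taylor coefficients of $h$ at the spacelike hypersurface $\Sigma$ from $(h_0,h_1)$ and Borel-sum them into an approximate solution $h_\ast\in\tilde\sG^{\infty,(\ell_0,\ell_{\!\scri})}$ with $(h_\ast,\cL_{x_{\!\scri}}h_\ast)|_\Sigma=(h_0,h_1)$ and error $f_\ast:=-2\rho_{\!\scri}^{-1}\rho^3 P(g_\bhm+r^{-1}h_\ast)$ vanishing to infinite order at $\Sigma$; by Corollary~\ref{CorNENonlin} one has $f_\ast\in\sF^{\infty,(\ell_0,\ell_{\!\scri})}$, and a cutoff combined with finite speed of propagation lets us further assume $f_\ast$ vanishes near $\Sigma$. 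Writing $h=h_\ast+\tilde h$, the problem becomes $P(g_\bhm+r^{-1}(h_\ast+\tilde h))=0$ with trivial Cauchy data for $\tilde h$, and one runs a Newton scheme with smoothing operators $S_\theta$ (built as standard mollifiers in the logarithmic coordinates $\log\rho_0,\log\rho_{\!\scri}$, so that they act tamely on the scale $\tilde\sG^k$): set $\tilde h^{(0)}=0$, $\tilde h^{(j+1)}=\tilde h^{(j)}+S_{\theta_j}v^{(j)}$ with $v^{(j)}$ the forward solution of $L_{g^{(j)}}v^{(j)}=F^{(j)}$, where $g^{(j)}=g_\bhm+r^{-1}(h_\ast+\tilde h^{(j)})$ and $F^{(j)}$ is the accumulated quadratic-plus-smoothing error furnished by Corollary~\ref{CorNENonlin}. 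The tame linear estimate~\eqref{EqNYEst}, with its fixed loss of derivatives, together with the tame product estimates already used in the proof of Proposition~\ref{PropNQ}, feed into the standard Nash--Moser machinery and yield convergence $\tilde h^{(j)}\to\tilde h$ in $\tilde\sG^{\infty,(\ell_0,\ell_{\!\scri})}$ provided $\|(h_0,h_1)\|_{22}$ is small relative to a higher-order norm, matching the hypotheses. Uniqueness follows from Proposition~\ref{PropNQ}: the difference $w$ of two solutions satisfies $L_{g_\sharp}w=0$ with vanishing Cauchy data, where $L_{g_\sharp}=\int_0^1 L_{g_s}\,\dd s$ is a variable-coefficient operator of the same structure as $L_g$ (with $g_s$ interpolating between the two solutions, by the fundamental theorem of calculus as in the proof of Corollary~\ref{CorNENonlin}), hence $w=0$.

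For the final assertion, set $\Upsilon:=\Ups_{E^\Ups}(g;g_\bhm)$, so that $P(g)=0$ reads $\Ric(g)=\delta_{g_\bhm,E^\cC}^*\Upsilon$. Applying $2\delta_g\sfG_g$ and using the contracted second Bianchi identity $\delta_g\sfG_g\Ric(g)=\delta_g\Ein(g)=0$ gives the homogeneous second-order equation $\delta_g\sfG_g\delta_{g_\bhm,E^\cC}^*\Upsilon=0$. Since $\delta_{g_\bhm,E^\cC}^*-\delta_g^*$ is of order zero (the difference of two connections plus the $\gamma^\cC$-modification), this operator on $1$-forms is principally $\tfrac12\Box_g$ — a quasilinear hyperbolic operator on $\Omega$, whose domain of dependence properties are governed by the causal structure of Lemma~\ref{LemmaNPCausal} — with the constraint-damping terms appearing as first-order (``damping'') corrections, cf.\ the discussion after~\eqref{EqIY1}. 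It remains to check that the Cauchy data of $\Upsilon$ at $\Sigma$ vanish: $\Upsilon|_\Sigma=0$ is assumed, and the transversal derivative vanishes by the classical constraint-propagation argument — the constraint equations are equivalent to $\Ein(g)(n,\cdot)|_\Sigma=0$ for $n$ the future unit normal of $\Sigma$, i.e.\ $(\sfG_g\delta_{g_\bhm,E^\cC}^*\Upsilon)(n,\cdot)|_\Sigma=0$, and because the tangential derivatives of $\Upsilon$ along $\Sigma$ vanish (as $\Upsilon|_\Sigma=0$), these components constitute an invertible linear system for $\cL_{x_{\!\scri}}\Upsilon|_\Sigma$, forcing it to vanish. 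Uniqueness for the hyperbolic equation then gives $\Upsilon\equiv0$ on $\Omega$, so $\Ric(g)=\delta_{g_\bhm,E^\cC}^*\Upsilon=0$ and $g$ solves the Einstein vacuum equations in the gauge $\Ups_{E^\Ups}(g;g_\bhm)=0$.

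The main obstacle I expect is the Nash--Moser convergence with the correct allocation of derivative losses (the explicit thresholds $22$, $2433$): one must combine the fixed loss from Theorem~\ref{ThmNY}, the two derivatives lost in Corollary~\ref{CorNENonlin}, and the regularity demanded by the tame product estimates, and verify that the scheme closes in the anisotropic spaces $\tilde\sG^{k,(\ell_0,\ell_{\!\scri})}$, which carry both b-regularity and a prescribed leading-order term at $\scri^+$. The constraint-propagation step is geometrically standard but needs care in checking that the distinguished components of $\sfG_g\delta_{g_\bhm,E^\cC}^*$ restricted to $\Sigma$ really do determine $\cL_{x_{\!\scri}}\Upsilon|_\Sigma$.
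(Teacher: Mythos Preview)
Your proposal is correct and follows the same two-part strategy as the paper: reduce the initial value problem to a forward problem and close via Nash--Moser using the tame estimate of Theorem~\ref{ThmNY}, then propagate the gauge condition using the second Bianchi identity. The only differences are cosmetic: the paper absorbs the Cauchy data by solving locally in a collar $x_{\!\scri}^{-1}([\tfrac12 c,c])$ and cutting off the resulting short-time solution (rather than Borel-summing a formal Taylor expansion as you do), and it invokes Saint-Raymond's Nash--Moser theorem directly with loss parameter $d=11$, which is where the explicit threshold $2433=16 d^2+43 d+24$ comes from.
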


\begin{rmk}[Initial data]
\label{RmkNYData}
  Given geometric initial data (i.e.\ a Riemannian metric and second fundamental form) on $\Sigma$ satisfying the constraint equations, it is easy to construct $h_0,h_1$ so that $g=g_\bhm+r^{-1}h$, with $h$ having initial data $h_0,h_1$ at $\Sigma$, attains these data at $\Sigma$ and satisfies $\Ups_{E^\Ups}(g;g_\bhm)=0$ at $\Sigma$, see e.g.\ \cite[Lemma~6.2]{HintzVasyMink4} (for a slightly different choice of gauge).
\end{rmk}

\begin{proof}[Proof of Corollary~\usref{CorNYStab}]
  While so far we have only discussed forcing problems, our energy estimate based arguments apply to initial value problems as well. Alternatively, one can piece together a short time solution $h_{\rm in}$ on $x_{\!\scri}^{-1}([\half c,c])$, say, with the forward solution of
  \[
    P_{\rm fw}(h) := \rho_{\!\scri}\rho^{-3}P\bigl(g_\bhm+r^{-1}(\chi h_{\rm in}+h)\bigr)=0,
  \]
  where $\chi\in\CIc((\half c,c])$ is $1$ on $[\frac34 c,c]$. Since $P_{\rm fw}(h)\in\Hb^{\infty,(\ell_0,\infty)}(\Omega;S^2\wt T^*M)$ is small in $\Hb^{22,(\ell_0,1)}(\Omega)$ and has support in $x_{\!\scri}^{-1}([\half c,\tfrac34 c])$, Nash--Moser iteration can be applied to the nonlinear map
  \[
    \tilde\sG^{\infty,(\ell_0,\ell_{\!\scri})}\ni h\mapsto P_{\rm fw}(h)\in\sF^{\infty,(\ell_0,\ell_{\!\scri})}
  \]
  in view of Corollary~\ref{CorNENonlin} and Theorem~\ref{ThmNY}, upon restricting to inputs $h$ vanishing on $x_{\!\scri}^{-1}([\half c,c])$. Indeed, applying the main theorem of \cite{SaintRaymondNashMoser} with loss of derivatives parameter $d=11$ (cf.~\eqref{EqNYEst}) produces the solution of~\eqref{EqNYStabEq}; here, $2433=16 d^2+43 d+24$.

  The second part is standard: given a solution $g=g_\bhm+r^{-1}h$ of~\eqref{EqNYStabEq} satisfying the constraint equations and the gauge condition initially, one first concludes that also $\cL_{\pa_{x_{\!\scri}}}\Ups_{E^\Ups}(g;g_\bhm)=0$ at $\Sigma$. The second Bianchi identity implies the homogeneous wave-type equation $2\delta_g\sfG_g\delta_{g,E^\cC}^*(\Ups_{E^\Ups}(g;g_\bhm))=0$ which gives $\Ups_{E^\Ups}(g;g_\bhm)\equiv 0$ and therefore, by definition of $P(g)$, also $\Ric(g)=0$.
\end{proof}

\begin{rmk}[Gravitational radiation and Bondi mass]
\label{RmkNYBondi}
  Given a Ricci-flat metric $g=g_\bhm+r^{-1}h\in\sG^{\infty,(\ell_0,\ell_{\!\scri})}$ in the gauge $\Ups_{E^\Ups}(g;g_\bhm)=0$, one can (with some effort) adapt the arguments in~\cite[\S8]{HintzVasyMink4} to identify the Bondi mass at retarded time $u:=-\rho_0^{-1}=t-r_*$ as
  \[
    M_{\rm B}(u) = m + \frac{\gamma^\Ups}{4\pi} \int_{S(u)} h_{1 1}^{(0)}\,\dd\slg,\qquad
    S(u):=\scri^+\cap\rho_0^{-1}(-1/u),
  \]
  using the notation of Definition~\ref{DefNPMetrics}. By~\eqref{EqNENonlin}, $M_{\rm B}(u)$ satisfies the mass loss formula
  \[
    \frac{\dd}{\dd u}M_{\rm B}(u) = -\frac{1}{32\pi}\int_{S(u)} |\pa_u\slh^{(0)}|_{\slg^{-1}}^2\,\dd\slg.
  \]
\end{rmk}

\begin{rmk}[Polyhomogeneity of the metric]
\label{RmkNYPhg}
  The methods of \cite[\S7]{HintzVasyMink4} apply, mutatis mutandis, to demonstrate the polyhomogeneity of the spacetime metric $g=g_\bhm+r^{-1}h$ on $M$ provided the initial data $h_0,h_1$ are polyhomogeneous. Since the metric perturbation $h$ here has stronger decay at $\scri^+$ compared to the reference, the index sets will be smaller than in \cite[Theorem~7.1]{HintzVasyMink4}.
\end{rmk}

\appendix
\section{Constraint damping and gauge change for the Maxwell equations}
\label{SMax}

As a simpler analogue to the Einstein vacuum equations, we consider the Maxwell equations for a 1-form (gauge potential) $A$ on the domain of dependence $t<r-1$ of the complement of the ball of radius $1$ inside $t^{-1}(0)$ inside Minkowski space $(\R^4,g)$, $g=-\dd x^0\,\dd x^1+r^2\slg$,
\begin{equation}
\label{EqMaxEq}
  \delta_g\dd A = 0.
\end{equation}
A standard way to break the gauge invariance $A\to A+\dd\chi$ is the imposition of the Lorenz gauge $\delta_g A=0$. The most simple-minded gauge-fixed Maxwell equations are then $\delta_g\dd A+\dd\delta_g A=0$; this is the tensor wave equation on 1-forms on $(\R^4,g)$. \emph{Constraint damping} and a \emph{gauge change} amount to modifications in the second, gauge breaking, term: letting
\[
  \dd_{E^\cC} := \dd + 2\gamma^\cC\cd^\cC,\qquad
  \delta_{g,E^\Ups} := \delta_g + 2\gamma^\Ups\iota_{\cd^\Ups}
\]
for $E^\bullet=(\cd^\bullet,\gamma^\bullet)$, $\bullet=\cC,\Ups$, we consider
\begin{equation}
\label{EqMaxFix}
  P'_{E^\cC,E^\Ups}A := \delta_g\dd A + \dd_{E^\cC}\delta_{g,E^\Ups}A = 0.
\end{equation}
On Minkowski space, we concretely take $\cd^\cC=\cd^\Ups=r^{-1}\,\dd t$, $\gamma^\cC>0$, $\gamma^\Ups<0$. Writing 1-forms in the splitting~\eqref{EqNMVbSplit}, we compute on functions, resp.\ 1-forms,
\begin{alignat*}{3}
  \dd&=\begin{pmatrix} \pa_0 \\ \pa_1 \\ r^{-1}\sld \end{pmatrix},&\qquad
  \dd_{E^\cC}-\dd&=r^{-1}\begin{pmatrix}\gamma^\cC \\ \gamma^\cC \\ 0\end{pmatrix}, \\
  \delta_g&=(2 r^{-2}\pa_1 r^2,2 r^{-2}\pa_0 r^2,r^{-1}\sldelta),&\qquad
  \delta_{g,E^\Ups}-\delta_g&=r^{-1}(-2\gamma^\Ups,-2\gamma^\Ups,0).
\end{alignat*}
In the notation of~\S\ref{SsND} and Lemma~\ref{LemmaNMSchwCausal}, recall that $\pa_0,r^{-1}\sld,r^{-1}\sldelta\in x_{\!\scri}\Veb(M)$ decay at $\scri^+$, whereas $\pa_1=\rho_0(\rho_0\pa_{\rho_0}-\rho_{\!\scri}\pa_{\rho_{\!\scri}})$ does not; the analogue of Proposition~\ref{PropNELin} then reads
\begin{align*}
  &L := \rho_{\!\scri}\rho^{-3}P'_{E^\cC,E^\Ups}\rho = L^0+\tilde L, \\
  &\quad L_0=-2(\rho_{\!\scri}\pa_{\rho_{\!\scri}}-S_{E^\cC,E^\Ups})(\rho_0\pa_{\rho_0}-\rho_{\!\scri}\pa_{\rho_{\!\scri}}) + x_{\!\scri}^2\slDelta,\qquad \tilde L\in x_{\!\scri}\Diffeb^2(M;\wt T^*M), \\
  &\quad S_{E^\cC,E^\Ups} = \begin{pmatrix} \gamma^\cC & 0 & 0 \\ \gamma^\cC{-}\gamma^\Ups & -\gamma^\Ups & 0 \\ 0 & 0 & 0 \end{pmatrix}.
\end{align*}
Thus, if $\omega=(\omega_0,\omega_1,\slomega)$ solves $L\omega=0$ with sufficiently decaying initial data, then $\omega_0$ (the Maxwell analogue of $\pi^\cC h$ in Definition~\ref{DefNPMetrics} or $\phi_3$ in the model~\eqref{EqIModel3}) and $\omega_1$ (the Maxwell analogue of $\pi^\Ups h$ or $\phi_4$) decay at $\scri^+$, while $\slomega$ has a leading order term at $\scri^+$. For initial data satisfying the Maxwell constraint equations and the gauge condition $\delta_{g,E^\Ups}\omega=0$ initially, $A:=r^{-1}\omega$ solves~\eqref{EqMaxEq} and globally satisfies the gauge condition (using an argument in which the Bianchi identity $\delta_g\sfG_g\Ric(g)\equiv 0$ is replaced by $\delta_g\delta_g\equiv 0$)
\begin{equation}
\label{EqMaxGauge}
  \delta_{g,E^\Ups}A = 0.
\end{equation}

Now to leading order at $\scri^+$, this gauge condition reads $\pa_1\omega_0=0$ (independently of $\gamma^\Ups$) and thus, by itself, only recovers the improved decay of $\omega_0$ at $\scri^+$. The improvement coming from the gauge change encoded by $E^\Ups$ only arises once one considers~\eqref{EqMaxGauge} \emph{together with the Maxwell equations}~\eqref{EqMaxEq}; on an algebraic level, the gauge condition~\eqref{EqMaxGauge} allows one to exchange occurrences of $\pa_1\omega_0$ in~\eqref{EqMaxEq} (in particular in second order terms $\pa_1^2\omega_0$, which do appear for~\eqref{EqMaxEq} but not for gauge-fixed equations such as~\eqref{EqMaxFix}) by lower order terms in the sense of decay, and one particular such combination is~\eqref{EqMaxFix} which implies the desired improved decay of $\omega_1$ due to the structure of $S_{E^\cC,E^\Ups}$.

We give a more conceptual (but more abstract) reason for the fact that the gauge change improves decay for a component ($\omega_1$) other than $\omega_0$ (which is affected by constraint damping and the accompanying improved decay of the gauge condition), based on duality considerations. Namely, since $\dd_{E^\cC}^*=\delta_{g,E^\cC}$, constraint damping with strength $\gamma^\cC>0$, resp.\ a gauge change with strength $\gamma^\Ups<0$, is dual to a gauge change with strength $\gamma^\cC>0$, resp.\ constraint damping with strength $\gamma^\Ups<0$ (note the `wrong' signs), in the sense that $E^\Ups$ and $E^\cC$ get interchanged when passing from $P'_{E^\cC,E^\Ups}$ in~\eqref{EqMaxFix} to its adjoint $(P'_{E^\cC,E^\Ups})^*=P'_{E^\Ups,E^\cC}$. Taking for simplicity $E^\cC=0$, `negative' constraint damping (encoded by $E^\Ups$ for the adjoint operator) allows one to solve the adjoint (thus, \emph{backwards}) forcing problem $(P'_{0,E^\Ups})^*\tilde u=\tilde f$ for $\tilde u,\tilde f$ having additional terms with \emph{more} growth at $\scri^+$ than without `inverse' constraint damping---concretely, $\tilde f$ may have growing contributions which are sections of the bundle $\la\dd x^0+\dd x^1\ra$ spanned by the nonzero eigenvector of $S_{E^\Ups,0}$, or equivalently $\tilde f$ lies in a space of more growing 1-forms so that $\pi\tilde f$ has standard bounds, with $\pi$ any bundle projection with $\ker\pi=\la\dd x^0+\dd x^1\ra$. Dually, this means that one can solve the forward problem for $P_{0,E^\Ups}\omega=f$ on function spaces encoding extra decay in certain components---concretely, for suitably decaying $f$, the solution $\omega$ is the sum of a 1-form with improved decay and a 1-form valued in $\ran\pi^*=(\ker\pi)^\perp=\la\dd x^0-\dd x^1\ra\oplus r T^*\Sph^2$ with standard $r^{-1}$ decay at $\scri^+$. Since the annihilator of $\ran\pi^*$ is $\pa_0+\pa_1$, this means that $\omega_0+\omega_1$ has improved decay. Combined with constraint damping (which gives the improved decay of $\omega_0$ as discussed after~\eqref{EqMaxGauge}), this finally provides improved decay of $\omega_1$. Analogous remarks apply to the (linearized) gauge-fixed Einstein equations; see Remark~\ref{RmkLDuality}.

\bibliographystyle{alphaurl}


\end{document}